\newtheorem{theorem}{Theorem}[section]
\newtheorem{proposition}[theorem]{Proposition}
\newtheorem{corollary}[theorem]{Corollary}
\theoremstyle{definition}
\numberwithin{equation}{section}
\DeclareMathOperator{\dist}{dist}
\DeclareMathOperator{\ad}{ad}
\DeclareMathOperator{\Hom}{Hom}
\DeclareMathOperator{\traza}{Tr}
\begin{document}

\title[Module homomorphisms between non-commutative $L^p$-spaces]
{Hyperreflexivity of the space of module homomorphisms between 
non-commutative $L^p$-spaces}

\author{J. Alaminos} 
\author{J. Extremera}
\author{M. L. C. Godoy} 
\author{A. R. Villena}
\address{Departamento de An\' alisis
Matem\' atico\\ Fa\-cul\-tad de Ciencias\\ Universidad de Granada\\
18071 Granada, Spain} 
\email{alaminos@ugr.es, jlizana@ugr.es, mgodoy@ugr.es, avillena@ugr.es}

\begin{abstract}
Let $\mathcal{M}$ be a von Neumann algebra, and let $0<p,q\le\infty$.
Then the space $\Hom_\mathcal{M}(L^p(\mathcal{M}),L^q(\mathcal{M}))$ of all
right $\mathcal{M}$-module homomorphisms from $L^p(\mathcal{M})$
to $L^q(\mathcal{M})$ is a reflexive subspace of 
the space of all continuous linear maps from $L^p(\mathcal{M})$ to $L^q(\mathcal{M})$.
Further, the space $\Hom_\mathcal{M}(L^p(\mathcal{M}),L^q(\mathcal{M}))$ is hyperreflexive in each of
the following cases:
(i) $1\le q<p\le\infty$;
(ii) $1\le p,q\le\infty$ and $\mathcal{M}$ is injective, in which case the hyperreflexivity constant is at most $8$.
\end{abstract}

\subjclass[2010]{Primary: 46L52, 46L10; Secondary: 47L05}
\keywords{Non-commutative $L^p$-spaces, injective von Neumann algebras,
reflexive subspaces, hyperreflexive subspaces, module homomorphisms}

\thanks{
The authors were supported by project PGC2018-093794-B-I00 (MCIU/AEI/FEDER, UE),   
Junta de Andaluc\'{\i}a grant FQM-185, and
Proyectos I+D+i del programa operativo FEDER-Andaluc\'{\i}a  A-FQM-48-UGR18.
The third named author was supported by Contrato Predoctoral FPU, 
Plan propio de Investigaci\'on y Transferencia 2018, University of Granada.
}

\maketitle

\section*{Introduction}

Let $\mathcal{A}$ be a closed subalgebra of the algebra
$B(\mathcal{H})$ of all continuous linear operators on the Hilbert space $\mathcal{H}$.
Then $\mathcal{A}$ is called reflexive if
\begin{equation*}
\mathcal{A}=
\bigl\{
T\in B(\mathcal{H}) : e^\perp Te=0 \ (e\in\text{lat}\mathcal{A})
\bigr\},
\end{equation*}
where 
$\text{lat}\mathcal{A}=\{e\in B(\mathcal{H})\text{ projection} : e^\perp T e=0 \ (T\in\mathcal{A})\}$
is the set of all projections onto the $\mathcal{A}$-invariant subspaces of $\mathcal{H}$.
The double commutant theorem shows that each von Neumann algebra on $\mathcal{H}$ is certainly reflexive. 
The algebra $\mathcal{A}$ is called hyperreflexive if the above condition on $\mathcal{A}$ is strengthened by
requiring that there is a distance estimate
\begin{equation*}
\dist(T,\mathcal{A})\le 
C\sup\bigl\{\Vert e^\perp Te\Vert : e\in\text{lat}\mathcal{A}\bigr\}
\quad
(T\in B(\mathcal{H}))
\end{equation*}
for some constant $C$. 
The inequality 
\[
\sup\bigl\{\Vert e^\perp Te\Vert : e\in\text{lat}\mathcal{A}\bigr\}
\le
\dist(T,\mathcal{A})
\quad
(T\in B(\mathcal{H}))
\]
is always true and elementary.
This quantitative version of reflexivity was introduced by Arveson \cite{Ar} and has proven to be a powerful
tool when it is available.
Christensen \cite{Chr1, Chr2, Chr3} showed that many von Neumann algebras are hyperreflexive by relating the 
hyperreflexivity to the vanishing of certain cohomology group.
Notably each injective von Neumann algebra $\mathcal{M}$ on the Hilbert space $\mathcal{H}$ is hyperreflexive and
\begin{equation*}
\dist(T,\mathcal{M})\le 
4\sup\bigl\{\Vert e^\perp Te\Vert : e\in\mathcal{M}\text{ projection}\bigr\}
\quad
(T\in B(\mathcal{H}))
\end{equation*}
(see \cite[Theorem~2.3]{Chr1} and \cite[p. 340]{DS}).

Both notions, reflexivity and hyperreflexivity, were extended to subspaces of $B(\mathcal{X},\mathcal{Y})$,
the Banach space of all continuous linear maps from the Banach space $\mathcal{X}$ to the Banach space $\mathcal{Y}$.
Following Loginov and Shulman \cite{LS}, 
a closed linear subspace $\mathcal{S}$ of $B(\mathcal{X},\mathcal{Y})$ is called
reflexive if
\begin{equation*}
\mathcal{S}=
\bigl\{
T\in B(\mathcal{X},\mathcal{Y}) :
T(x)\in\overline{\{S(x) : S\in\mathcal{S}\}} \, (x\in\mathcal{X})
\bigr\}.
\end{equation*}
In accordance with Larson \cite{L3,L4},
$\mathcal{S}$ is called hyperreflexive if there exists a constant $C$ such that
\begin{equation*}
\dist(T,\mathcal{S})\le
C
\sup_{x\in\mathcal{X}, \, \Vert x\Vert\le 1}
\inf
\bigl\{\Vert T(x)-S(x)\Vert : S\in\mathcal{S}
\bigr\}
\quad
(T\in B(\mathcal{X},\mathcal{Y})),
\end{equation*}
and the optimal constant is called the hyperreflexivity constant of $\mathcal{S}$.
The inequality
\begin{equation*}
\sup_{x\in\mathcal{X}, \, \Vert x\Vert\le 1}
\inf
\bigl\{\Vert T(x)-S(x)\Vert : S\in\mathcal{S}
\bigr\}
\le
\dist(T,\mathcal{S})
\quad
(T\in B(\mathcal{X},\mathcal{Y})).
\end{equation*}
is always true.

The ultimate objective of this paper is to study the hyperreflexivity of the space 
$\Hom_\mathcal{M}(L^p(\mathcal{M}),L^q(\mathcal{M}))$ of all (automatically continuous) 
right  $\mathcal{M}$-module homomorphisms from $L^p(\mathcal{M})$ to $L^q(\mathcal{M})$ 
for a von Neumann algebra $\mathcal{M}$.
The non-commutative $L^p$-spaces that we consider throughout are those introduced by Haagerup 
(see \cite{H0,PX,T}).
For each $0<p\le\infty$, the space $L^p(\mathcal{M})$ is a contractive Banach $\mathcal{M}$-bimodule
or a contractive $p$-Banach $\mathcal{M}$-bimodule according to $1\le p$ or $p<1$, and
we will focus on the right $\mathcal{M}$-module structure of $L^p(\mathcal{M})$. 

Our method relies in the analysis  of a continuous bilinear map $\varphi\colon\mathcal{A}\times\mathcal{A}\to\mathcal{X}$,
for a $C^*$-algebra $\mathcal{A}$ and a normed space $\mathcal{X}$, through the knowledge of the
constant $\sup\{\Vert\varphi(a,b)\Vert : a,b\in\mathcal{A}_+\text{ contractions, } ab=0\}$,
alternatively, the constant $\sup\{\Vert\varphi(e,e^\perp)\Vert : e\in\mathcal{A}_+\text{ projection}\}$
in the case where $\mathcal{A}$ is unital and has real rank zero.
This is done in Section \ref{s1}. 

In Section \ref{s2} we prove that,
for each $0<p,q\le\infty$,
each right $\mathcal{M}$-module homomorphism from $L^p(\mathcal{M})$ to $L^q(\mathcal{M})$
is automatically continuous and that the space $\Hom_\mathcal{M}(L^p(\mathcal{M}),L^q(\mathcal{M}))$ of all 
right $\mathcal{M}$-module homomorphisms is a reflexive subspace of $B(L^p(\mathcal{M}),L^q(\mathcal{M}))$
(the notion of reflexivity makes perfect sense for subspaces of operators between quasi-Banach spaces).

Section \ref{s3} is devoted to study the hyperreflexivity of 
$\Hom_\mathcal{M}(L^p(\mathcal{M}),L^q(\mathcal{M}))$ for $1\le p,q\le\infty$.
The space $B(L^p(\mathcal{M}),L^q(\mathcal{M}))$ is a Banach
$\mathcal{M}$-bimodule for the operations specified by
\begin{equation*}
(aT)(x)=T(xa),
\quad
(Ta)(x)=T(x)a
\end{equation*}
for all $T\in B(L^p(\mathcal{M}),L^q(\mathcal{M}))$, $a\in\mathcal{M}$, and $x\in L^p(\mathcal{M})$
(note that the left $\mathcal{M}$-module structure of both $L^p(\mathcal{M})$ and $L^q(\mathcal{M})$ is disregarded),
and we will prove that there is a distance estimate
\[
\dist 
\bigl(T,\Hom_\mathcal{M}(L^p(\mathcal{M}),L^q(\mathcal{M}))\bigr)\le
C
\sup\bigl\{\Vert e^\perp Te\Vert : e\in\mathcal{M}\text{ projection}\bigr\}
\]
for each $T\in B(L^p(\mathcal{M}),L^q(\mathcal{M}))$
in each of the following cases:
\begin{enumerate}
\item[(i)]
$1\le q<p\le\infty$, in which case the constant $C$ can be chosen to depend on $p$ and $q$, and not on $\mathcal{M}$;
\item[(ii)]
$1\le p,q\le \infty$ and
$\mathcal{M}$ is injective, in which case the constant $C$ can be taken to be $8$.
\end{enumerate}
Further,
\begin{gather*}
\sup\bigl\{\Vert e^\perp Te\Vert : e\in\mathcal{M}\text{ projection}\bigr\} \\
\le
\sup_{x\in L^p(\mathcal{M}), \, \Vert x\Vert_p\le 1}
\inf
\bigl\{\Vert T(x)-\Phi(x)\Vert_q : \Phi\in\Hom_\mathcal{M}(L^p(\mathcal{M}),L^q(\mathcal{M}))
\bigr\},
\end{gather*}
and thus, in both cases, it turns out that the space $\Hom_\mathcal{M}(L^p(\mathcal{M}),L^q(\mathcal{M}))$ is hyperreflexive.

It is perhaps worth remarking that most of the discussion of reflexivity and hyperreflexivity is accomplished 
for continuous homomorphisms between modules over a $C^*$-algebra.

Throughout this paper 
we write $\mathcal{X}^*$ for the dual of  a Banach space $\mathcal{X}$ and 
$\langle\cdot,\cdot\rangle$ for the duality between $\mathcal{X}$ and $\mathcal{X}^*$.

\section{Analysing bilinear maps through orthogonality}\label{s1}

Goldstein proved in \cite{Go} (albeit with sesquilinear functionals) that, 
for each $C^*$-algebra $\mathcal{A}$, every continuous bilinear functional 
$\varphi\colon\mathcal{A}\times\mathcal{A}\to\mathbb{C}$ with the property that
$\varphi(a,b)=0$ whenever $a,b\in\mathcal{A}_{sa}$ satisfy $ab=0$ can be represented in the form
$\varphi(a,b)=\omega_1(ab)+\omega_2(ba)$ $(a,b\in\mathcal{A})$ for some $\omega_1,\omega_2\in\mathcal{A}^*$.
Independently, it was shown in \cite{Vil0} that if $\mathcal{A}$ is a 
$C^*$-algebra or the group algebra $L^1(G)$ of a locally compact group $G$, 
then every continuous bilinear functional
$\varphi\colon\mathcal{A}\times\mathcal{A}\to\mathbb{C}$ with the property that 
$\varphi(a,b)=0$ whenever $a,b\in\mathcal{A}$ are such that $ab=0$ necessarily satisfies
the condition $\varphi(ab,c)=\varphi(a,bc)$ $(a,b,c\in\mathcal{A})$, 
which in turn implies the existence of $\omega\in\mathcal{A}^*$ such that $\varphi(a,b)=\omega(ab)$ $(a,b\in\mathcal{A})$.
Actually, \cite{Vil} gives more, namely, 
the norms $\Vert\varphi(ab,c)-\varphi(a,bc)\Vert$ with $a,b,c\in\mathcal{A}$ 
can be estimated through the constant 
$\sup\{\Vert\varphi(a,b)\Vert : a,b\in\mathcal{A}, \, \Vert a\Vert=\Vert b\Vert=1, \, ab=0\}$. 
This property has proven to be useful to study the hyperreflexivity
of the spaces of derivations and continuous cocycles on $\mathcal{A}$ (see
\cite{Vil1, Vil2, Sam1, SS2, SS}). This section provides an improvement of the above mentioned property
in the case of $C^*$-algebras,
and this will be used later to study the hyperreflexivity of the space
$\Hom_\mathcal{A}(\mathcal{X},\mathcal{Y})$ of all continuous module homomorphisms between
the Banach right $\mathcal{A}$-modules $\mathcal{X}$ and $\mathcal{Y}$.

\begin{theorem}\label{t1946}
Let $\mathcal{A}$ be a $C^*$-algebra, 
let $\mathcal{Z}$ be a normed space, 
let $\varphi\colon\mathcal{A}\times\mathcal{A}\to\mathcal{Z}$ be a continuous bilinear map, and
let the constant $\varepsilon\ge 0$ be such that
\begin{equation*}
a,b\in\mathcal{A}_+, \ ab=0 \ \Longrightarrow \ \Vert\varphi(a,b)\Vert\le\varepsilon\Vert a\Vert\Vert b\Vert.
\end{equation*}
Suppose that $(e_j)_{j\in J}$ is a net in $\mathcal{A}$ such that $(e_j)_{j\in J}$ converges to 
$1_{\mathcal{A}^{**}}$ in $\mathcal{A}^{**}$ with respect to the weak* topology.
Then,
for each $a\in\mathcal{A}$,
the nets $(\varphi(a,e_j))_{j\in J}$ and $(\varphi(e_j,a))_{j\in J}$ converge in $\mathcal{Z}^{**}$ 
with respect to the weak* topology and
\begin{equation*}
\bigl\Vert\lim_{j\in J}\varphi(a,e_j)-\lim_{j\in J}\varphi(e_j,a)\bigr\Vert\le 8\varepsilon\Vert a\Vert.
\end{equation*}
In particular, if $\mathcal{A}$ is unital, then
\begin{equation*}
\Vert\varphi(a,1_\mathcal{A})-\varphi(1_\mathcal{A},a)\Vert\le 8\varepsilon\Vert a\Vert\quad (a\in\mathcal{A}).
\end{equation*}
\end{theorem}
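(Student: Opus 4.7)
The plan is to extend $\varphi$ to the bidual and exploit the rich supply of projections there. Let $\tilde\varphi\colon\mathcal{A}^{**}\times\mathcal{A}^{**}\to\mathcal{Z}^{**}$ denote the (first) Arens extension, which has the same norm as $\varphi$, is weak*-continuous in the first variable throughout $\mathcal{A}^{**}$, and is weak*-continuous in the second variable whenever the first argument lies in $\mathcal{A}$. These continuity properties immediately produce the convergences in the statement: for $a\in\mathcal{A}$ fixed, $\varphi(a,e_j)=\tilde\varphi(a,e_j)\to\tilde\varphi(a,1_{\mathcal{A}^{**}})$ and $\varphi(e_j,a)=\tilde\varphi(e_j,a)\to\tilde\varphi(1_{\mathcal{A}^{**}},a)$ in the weak* topology of $\mathcal{Z}^{**}$, so the task reduces to establishing
\[
\bigl\|\tilde\varphi(a,1_{\mathcal{A}^{**}})-\tilde\varphi(1_{\mathcal{A}^{**}},a)\bigr\|\le 8\varepsilon\|a\|\qquad(a\in\mathcal{A}),
\]
which also covers the unital case by specialising $e_j\equiv 1_\mathcal{A}$.

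The key technical step is to lift the orthogonality hypothesis to the bidual: for every $\xi,\eta\in(\mathcal{A}^{**})_+$ with $\xi\eta=0$, one wants $\|\tilde\varphi(\xi,\eta)\|\le\varepsilon\|\xi\|\|\eta\|$. I would first reduce to the scalar-valued case by composing with an arbitrary $\omega\in\mathcal{Z}^*$: the form $\psi=\omega\circ\varphi$ inherits the orthogonality bound, and Haagerup's theorem guarantees that every continuous bilinear form on a $C^*$-algebra is Arens regular, so its Arens extension is jointly weak*-continuous on bounded sets and $\omega\circ\tilde\varphi$ coincides with $\tilde\psi$. It then suffices to produce a bounded net $(a_\gamma,b_\gamma)\in\mathcal{A}_+\times\mathcal{A}_+$ with $a_\gamma b_\gamma=0$ and $(a_\gamma,b_\gamma)\to(\xi,\eta)$ jointly in the product weak* topology. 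Such a net is to be constructed by working inside the commutative von Neumann subalgebra $W^*(\xi,\eta)\subset\mathcal{A}^{**}$, where $\xi$ and $\eta$ have disjoint supports: applying the functional-calculus truncations $(\xi-\delta)_+,(\eta-\delta)_+$ (which still annihilate each other and approximate $\xi,\eta$ in norm) and then exploiting Kaplansky density within the mutually orthogonal hereditary $C^*$-subalgebras these truncations generate in $\mathcal{A}^{**}$ produces approximants $a_\gamma,b_\gamma\in\mathcal{A}$ with the required exact orthogonality.

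Granted the bidual orthogonality, the rest is quantitative bookkeeping inside the von Neumann algebra $\mathcal{A}^{**}$. For any projection $p\in\mathcal{A}^{**}$ with $p^\perp=1_{\mathcal{A}^{**}}-p$, the bound yields $\|\tilde\varphi(p,p^\perp)\|\le\varepsilon$ and $\|\tilde\varphi(p^\perp,p)\|\le\varepsilon$; expanding $\tilde\varphi(p,1_{\mathcal{A}^{**}})=\tilde\varphi(p,p)+\tilde\varphi(p,p^\perp)$ together with its symmetric counterpart gives $\|\tilde\varphi(p,1_{\mathcal{A}^{**}})-\tilde\varphi(1_{\mathcal{A}^{**}},p)\|\le 2\varepsilon$. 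Since every self-adjoint contraction in $\mathcal{A}^{**}$ lies in the norm-closed convex hull of symmetries $2p-1$, and the contributions $\tilde\varphi(1_{\mathcal{A}^{**}},1_{\mathcal{A}^{**}})$ cancel in
\[
\tilde\varphi(2p-1,1_{\mathcal{A}^{**}})-\tilde\varphi(1_{\mathcal{A}^{**}},2p-1)=2\bigl(\tilde\varphi(p,1_{\mathcal{A}^{**}})-\tilde\varphi(1_{\mathcal{A}^{**}},p)\bigr),
\]
this upgrades the estimate to $4\varepsilon\|\xi\|$ on $(\mathcal{A}^{**})_{sa}$. Splitting a general $a\in\mathcal{A}$ into real and imaginary parts of norm at most $\|a\|$ produces the final constant $8\varepsilon$.

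I anticipate that the main obstacle will be the bidual orthogonality lift: the Kaplansky approximants of $\xi$ and $\eta$ are not jointly orthogonal in general, so the construction must simultaneously respect the commutative structure of $W^*(\xi,\eta)$ and the hereditary-subalgebra geometry of $\mathcal{A}^{**}$ to secure $a_\gamma b_\gamma=0$ throughout the net. Once this joint approximation is in place, the scalar reduction via Haagerup's Arens-regularity theorem and the projection-convex-combination argument inside $\mathcal{A}^{**}$ combine to give the desired bound with the constant $8$.
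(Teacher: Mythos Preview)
Your overall architecture is right and the $2\to 4\to 8$ escalation at the end is fine, but the ``bidual orthogonality lift'' that you flag as the main obstacle is genuinely problematic, for two intertwined reasons.

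First, Arens regularity of the scalar form $\psi=\omega\circ\varphi$ (Haagerup) gives you \emph{separate} weak* continuity of $\tilde\psi$ on $\mathcal{A}^{**}\times\mathcal{A}^{**}$, not joint weak* continuity on bounded sets. The latter is simply false: on $L^\infty[0,1]$ take $\psi(f,g)=\int fg$ and Rademacher functions $r_n$; then $r_n\to 0$ weak* but $\psi(r_n,r_n)=1$. So even if you had a single bounded net $(a_\gamma,b_\gamma)\to(\xi,\eta)$ with $a_\gamma b_\gamma=0$, you could not conclude $\psi(a_\gamma,b_\gamma)\to\tilde\psi(\xi,\eta)$. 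What separate continuity does give you is an \emph{iterated} limit $\tilde\psi(\xi,\eta)=\lim_\alpha\lim_\beta\psi(a_\alpha,b_\beta)$, but then you need $a_\alpha b_\beta=0$ for all pairs $(\alpha,\beta)$ in a cofinal set, a much stronger requirement than orthogonality along a diagonal net.

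Second, your Kaplansky-density sketch does not deliver such doubly orthogonal approximants in $\mathcal{A}$. The hereditary subalgebra of $\mathcal{A}^{**}$ generated by $(\xi-\delta)_+$ is $e\mathcal{A}^{**}e$ for some projection $e\in\mathcal{A}^{**}$, and Kaplansky density for $\mathcal{A}\subset\mathcal{A}^{**}$ says nothing about $\mathcal{A}\cap e\mathcal{A}^{**}e$ being large unless $e$ happens to be an open projection coming from a hereditary subalgebra of $\mathcal{A}$ itself. For an arbitrary projection $p\in\mathcal{A}^{**}$ (which is what your convex-hull-of-symmetries step requires), neither $p$ nor $p^\perp$ need be open.

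The paper sidesteps both issues by never attempting the general lift. Since the target inequality is only needed for $a\in\mathcal{A}$, one fixes $a\in\mathcal{A}_+$ with $\Vert a\Vert\le 1$ and works exclusively with its spectral projections $\chi_{[0,\alpha]}(a),\,\chi_{]\alpha,1]}(a)\in\mathcal{A}^{**}$. These are approximated weak* by continuous functions $f_m(a),g_n(a)\in C^*(a)\subset\mathcal{A}$ chosen so that $f_m g_n=0$ whenever $m\ge n$; the iterated limit then legitimately yields $\Vert\psi(\chi_{]\alpha,1]}(a),\chi_{[0,\alpha]}(a))\Vert\le\varepsilon$ and its symmetric counterpart. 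Passing from these spectral projections to $a$ itself is done via the Riemann-type approximation $a\approx\frac{1}{n+1}\sum_{k=1}^n\chi_{]k/(n+1),1]}(a)$, which converges in norm and gives the $2\varepsilon$ bound for positive $a$. Your convex-hull-of-symmetries argument would work equally well here, since the symmetries $2\chi_{]t,1]}(a)-1$ needed to express a self-adjoint contraction $a\in\mathcal{A}$ are precisely built from spectral projections of $a$; but you must restrict to those, not to arbitrary projections in $\mathcal{A}^{**}$.
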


\begin{proof}	
First, we regard $\varphi$ as a continuous bilinear map with values in $\mathcal{Z}^{**}$.
By applying \cite[Theorem~2.3]{JKR} to $\mathcal{A}$ acting on the Hilbert space of its universal representation, 
we obtain that  $\varphi$ extends uniquely, without change of norm, to a continuous bilinear map 
$\psi\colon\mathcal{A}^{**}\times\mathcal{A}^{**}\to\mathcal{Z}^{**}$ 
which is separately weak* continuous.

Now, since $(e_j)_{j\in J}\to 1_{\mathcal{A}^{**}}$ with respect to the weak* topology and $\psi$ 
is separately weak* continuous, we see that, for each $a\in\mathcal{A}$, the nets
$(\varphi(a,e_j))_{j\in J}$
and 
$(\varphi(e_j,a))_{j\in J}$
converge to $\psi(a,1_{\mathcal{A}^{**}})$ and
$\psi(1_{\mathcal{A}^{**}},a)$, respectively, with
respect to the weak* topology of $\mathcal{Z}^{**}$.
Consequently, the proof of the theorem is completed by showing that
\begin{equation}\label{e1058}
\bigl\Vert\psi(a,1_{\mathcal{A}^{**}})-\psi(1_{\mathcal{A}^{**}},a)\bigr\Vert
\le
8\varepsilon\Vert a\Vert
\quad
(a\in\mathcal{A}).
\end{equation}

Our next objective is to prove \eqref{e1058}.

We begin with the case $a\in\mathcal{A}_+$.
For this purpose, 
we fix $a\in\mathcal{A}_+$ with $\Vert a\Vert\le 1$ and,
for each $0<\alpha<1$, we claim that
\begin{equation}\label{e1530}
\bigl\Vert\psi\bigl(\chi_{[0,\alpha]}(a),\chi_{]\alpha,1]}(a)\bigr)\bigr\Vert
\le \varepsilon
\end{equation}
and
\begin{equation}\label{e1531}
\bigl\Vert\psi\bigl(\chi_{]\alpha,1]}(a),\chi_{[0,\alpha]}(a)\bigr)\bigr\Vert
\le \varepsilon.
\end{equation}
We use the notation $\chi_\Delta$ for the characteristic function of a subset $\Delta$ of $[0,1]$.
We choose decreasing sequences of real numbers $(\alpha_n)$ and $(\beta_n)$ with $\alpha<\alpha_n<\beta_n<1$
$(n\in\mathbb{N})$ and $\lim\alpha_n=\lim\beta_n=\alpha$. For each $n\in\mathbb{N}$, we define continuous functions
$f_n,g_n\colon[0,1]\to\mathbb{R}$ by
\[
f_n(t)= 
    \begin{cases}
1 & \text{if $0\le t\le\alpha$,}\\
\frac{t-\alpha_n}{\alpha-\alpha_n} & \text{if $\alpha\le t\le\alpha_n$,}\\
0 & \text{if $\alpha_n\le t\le 1$,}
\end{cases}
\quad
g_n(t)=
\begin{cases}
0 & \text{if $0\le t\le\alpha_n$,}\\
\frac{t-\alpha_n}{\beta_n-\alpha_n} & \text{if $\alpha_n\le t\le\beta_n$,}\\
1 & \text{if $\beta_n\le t\le 1$}.
    \end{cases}
\]
Then the sequences $(f_n)$ and $(g_n)$ are uniformly bounded and they
converge pointwise in $[0,1]$ to $\chi_{[0,\alpha]}$ and $\chi_{]\alpha,1]}$, respectively.
From a basic property of the Borel functional calculus on the von Neumann algebra $\mathcal{A}^{**}$,
it follows that the sequences $\bigl(f_n(a)\bigr)$ and $\bigl(g_n(a)\bigr)$ converge with 
respect to the weak operator topology to $\chi_{[0,\alpha]}(a)$ and $\chi_{]\alpha,1]}(a)$, respectively. 
Since the sequences $\bigl(f_n(a)\bigr)$ and $\bigl(g_n(a)\bigr)$ are bounded and the weak operator topology 
of $\mathcal{A}^{**}$ coincides with the weak* topology on any bounded 
subset of $\mathcal{A}^{**}$, we conclude that $\bigl(f_n(a)\bigr)$ and $\bigl(g_n(a)\bigr)$ converge with 
respect to the weak* topology to $\chi_{[0,\alpha]}(a)$ and $\chi_{]\alpha,1]}(a)$, respectively. Hence
\begin{equation}\label{e1532}
\psi\bigl(\chi_{[0,\alpha]}(a),\chi_{]\alpha,1]}(a)\bigr)=
\lim_{n\to\infty}\lim_{m\to\infty}\varphi\bigl(f_m(a),g_n(a)\bigr)
\end{equation}
and
\begin{equation}\label{e1533}
\psi\bigl(\chi_{]\alpha,1]}(a),\chi_{[0,\alpha]}(a)\bigr)=
\lim_{n\to\infty}\lim_{m\to\infty}\varphi\bigl(g_n(a),f_m(a)\bigr)
\end{equation}
On the other hand, if $m,n\in\mathbb{N}$ with $m\ge n$, then we have $f_mg_n=0$, so that 
\[
f_m(a)g_n(a)=g_n(a)f_m(a)=0.
\]
Further, $f_m(a),g_n(a)\in\mathcal{A}_+$ and $\Vert f_m(a)\Vert,\Vert g_n(a)\Vert\le 1$.
Therefore, by hypothesis, we have
\begin{equation}\label{e1534}
\bigl\Vert\varphi\bigl(f_m(a),g_n(a)\bigr)\bigr\Vert\le\varepsilon
\end{equation}
and
\begin{equation}\label{e1535}
\bigl\Vert\varphi\bigl(g_n(a),f_m(a)\bigr)\bigr\Vert\le\varepsilon
\end{equation}
for all $m,n\in\mathbb{N}$ with $m\ge n$.
From \eqref{e1532} and \eqref{e1534} we deduce \eqref{e1530}, 
while \eqref{e1533} and \eqref{e1535} give \eqref{e1531}.
For each $n\in\mathbb{N}$, let $h_n\colon[0,1]\to\mathbb{R}$ be the bounded Borel function defined by
\[
h_n=\frac{1}{n+1}\sum_{k=1}^n\chi_{]k/(n+1),1]}.
\]
We also consider the sequence in $\mathcal{A}^{**}$ given by $\bigl(h_n(a)\bigr)$.
Since the sequence $(h_n)$ converges uniformly on $[0,1]$ to the identity map on $[0,1]$,
it follows that $\bigl(h_n(a)\bigr)\to a$ with respect to the norm topology.
Thus
\begin{equation}\label{e1536}
\begin{split}
&\psi(a,1_{\mathcal{A}^{**}})-\psi(1_{\mathcal{A}^{**}},a)=\lim_{n\to\infty}\Bigl(\psi\bigl(h_n(a),1_{\mathcal{A}^{**}}\bigr)-
\psi\bigl(1_{\mathcal{A}^{**}},h_n(a)\bigr)\Bigr)\\
&=
\lim_{n\to\infty}\frac{1}{n+1}
\sum_{k=1}^n\Bigl(\psi\bigl(\chi_{]\frac{k}{n+1},1]}(a),1_{\mathcal{A}^{**}}\bigr)-
\psi\bigl(1_{\mathcal{A}^{**}},\chi_{]\frac{k}{n+1},1]}(a)\bigr)\Bigr).
\end{split}
\end{equation}
Further, for each $k\in\{1,\ldots,n\}$, we have
\[
1_{\mathcal{A}^{**}}=\chi_{[0,k/(n+1)]}(a)+\chi_{]k/(n+1),1]}(a),
\]
so that
\begin{multline*}
\psi\bigl(\chi_{]k/(n+1),1]}(a),1_{\mathcal{A}^{**}}\bigr)-
\psi\bigl(1_{\mathcal{A}^{**}},\chi_{]k/(n+1),1]}(a)\bigr) \\
=
\psi\bigl(\chi_{]k/(n+1),1]}(a),\chi_{[0,k/(n+1)]}(a)\bigr)-
\psi\bigl(\chi_{[0,k/(n+1)]}(a),\chi_{]k/(n+1),1]}(a)\bigr),
\end{multline*}
and the inequalities \eqref{e1530} and \eqref{e1531} then give
\begin{equation}\label{e1537}
\bigl\Vert\psi\bigl(\chi_{]k/(n+1),1]}(a),1_{\mathcal{A}^{**}}\bigr)-
\psi\bigl(1_{\mathcal{A}^{**}},\chi_{]k/(n+1),1]}(a)\bigr)\bigr\Vert\le 2\varepsilon.
\end{equation}
From \eqref{e1536} and \eqref{e1537} it may be concluded that
\begin{equation}\label{e1011}
\bigl\Vert\psi(a,1_{\mathcal{A}^{**}})-\psi(1_{\mathcal{A}^{**}},a)\bigr\Vert
\le
\lim_{n\to\infty}\frac{n2\varepsilon}{n+1}=2\varepsilon.
\end{equation}

Now suppose that $a\in\mathcal{A}_{\textup{sa}}$.
Then we can write $a=a_+-a_-$, where $a_+,a_-\in\mathcal{A}_+$ are mutually orthogonal, and \eqref{e1011} gives
\begin{align*}
\bigl\Vert\psi(a,1_{\mathcal{A}^{**}})-\psi(1_{\mathcal{A}^{**}},a)\bigr\Vert
& \le
\bigl\Vert\psi(a_+,1_{\mathcal{A}^{**}})-
\psi(1_{\mathcal{A}^{**}},a_+)\bigr\Vert \\
& \quad {}+
\bigl\Vert\psi(a_-,1_{\mathcal{A}^{**}})-
\psi(1_{\mathcal{A}^{**}},a_-)\bigr\Vert
\\ 
& \le
2\varepsilon\Vert a_+\Vert+2\varepsilon\Vert a_-\Vert
\\ 
& \le
4\varepsilon\max\{\Vert a_+\Vert,\Vert a_-\Vert\}
=
4\varepsilon\Vert a\Vert.
\end{align*}

Finally take $a\in\mathcal{A}$, and write 
\begin{equation}\label{z1} 
a=\Re a+i\Im a,
\end{equation} 
where
\[
\Re a=\frac{1}{2}(a^*+a), \, 
\Im a=\frac{i}{2}(a^*-a)\in\mathcal{A}_{\textup{sa}},
\]
and, further,
$\Vert\Re a\Vert,\Vert\Im a\Vert\le\Vert a\Vert$.
From what has previously been proved, it follows that
\begin{align*}
\bigl\Vert\psi(a,1_{\mathcal{A}^{**}})-\psi(1_{\mathcal{A}^{**}},a)\bigr\Vert
& \le
\bigl\Vert\psi(\Re a,1_{\mathcal{A}^{**}})-\psi(1_{\mathcal{A}^{**}},\Re a)\bigr\Vert \\
& \quad {}+
\bigl\Vert\psi(\Im a,1_{\mathcal{A}^{**}})-\psi(1_{\mathcal{A}^{**}},\Im a) \bigr\Vert
\\
& \le
4\varepsilon\Vert\Re a\Vert+4\varepsilon\Vert\Im a\Vert
\\ 
& \le
8\varepsilon\Vert a\Vert.
\end{align*}
This gives \eqref{e1058} and completes the proof of the theorem.
\end{proof}

\begin{theorem}\label{t1945}
Let $\mathcal{A}$ be a unital $C^*$-algebra of real rank zero,
let $\mathcal{Z}$ be a topological vector space, and
let $\varphi\colon\mathcal{A}\times\mathcal{A}\to\mathcal{Z}$ be a continuous bilinear map.
\begin{enumerate}
\item[(i)]
Suppose that 
\begin{equation*}
e\in\mathcal{A}\text{ projection} \ \Longrightarrow \ \varphi(e,e^\perp)=0.
\end{equation*}
Then
\begin{equation*}
\varphi(a,1_\mathcal{A})=\varphi(1_\mathcal{A},a)
\quad (a\in \mathcal{A}).
\end{equation*}
\item[(ii)]
Suppose that $\mathcal{Z}$ is a normed space and let the constant $\varepsilon\ge 0$ be such that
\begin{equation*}
e\in\mathcal{A}\text{ projection} \ \Longrightarrow \ \Vert\varphi(e,e^\perp)\Vert\le\varepsilon.
\end{equation*}
Then
\begin{equation*}
\left\Vert\varphi(a,1_\mathcal{A})-\varphi(1_\mathcal{A},a)\right\Vert\le 8\varepsilon\Vert a\Vert
\quad (a\in \mathcal{A}).
\end{equation*}
\end{enumerate}	
\end{theorem}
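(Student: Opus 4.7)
The plan is to analyze the continuous linear map $f \colon \mathcal{A} \to \mathcal{Z}$ defined by $f(a) = \varphi(a, 1_\mathcal{A}) - \varphi(1_\mathcal{A}, a)$. For any projection $e \in \mathcal{A}$, expanding $1_\mathcal{A} = e + e^\perp$ in each argument and cancelling the common $\varphi(e,e)$ term gives
\[
f(e) = \varphi(e, e^\perp) - \varphi(e^\perp, e),
\]
to which both hypotheses apply, since $e^\perp$ is itself a projection. For part (i) this immediately yields $f(e) = 0$ on every projection of $\mathcal{A}$. Since $\mathcal{A}$ has real rank zero, the finite-spectrum self-adjoint elements, i.e.\ real-linear combinations of pairwise orthogonal projections of $\mathcal{A}$, are norm-dense in $\mathcal{A}_{\textup{sa}}$; linearity and continuity of $f$ then extend the vanishing to all of $\mathcal{A}_{\textup{sa}}$, and the decomposition $a = \Re a + i \Im a$ gives $f \equiv 0$ on $\mathcal{A}$.

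For part (ii), the same identity yields $\|f(e)\| \le 2\varepsilon$ for every projection $e$. The main step is to upgrade this to $\|f(a)\| \le 2\varepsilon\|a\|$ for each $a \in \mathcal{A}_+$, which I will do by Abel summation on the spectral decomposition. For $a \in \mathcal{A}_+$ of finite spectrum, write $a = \sum_{i=1}^m \lambda_i p_i$ with $\lambda_1 > \cdots > \lambda_m > 0$ and pairwise orthogonal projections $p_i \in \mathcal{A}$. Setting $q_i = p_1 + \cdots + p_i$, a projection in $\mathcal{A}$, and $\mu_i = \lambda_i - \lambda_{i+1} > 0$ with $\lambda_{m+1} = 0$, a telescoping computation yields $a = \sum_{i=1}^m \mu_i q_i$ with $\sum_i \mu_i = \lambda_1 = \|a\|$, so
\[
\|f(a)\| \le \sum_i \mu_i \|f(q_i)\| \le 2\varepsilon \|a\|.
\]
Real rank zero also guarantees that positive finite-spectrum elements are norm-dense in $\mathcal{A}_+$ (pass to the positive part of a self-adjoint finite-spectrum approximation; because $a \ge 0$, the negative part of the approximation is small), and continuity of $f$ then extends the bound to all of $\mathcal{A}_+$.

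To finish, for $a \in \mathcal{A}_{\textup{sa}}$ the decomposition $a = a_+ - a_-$ gives $\|f(a)\| \le 2\varepsilon(\|a_+\| + \|a_-\|) \le 4\varepsilon\|a\|$, and for a general $a \in \mathcal{A}$ the decomposition $a = \Re a + i\Im a$ gives $\|f(a)\| \le 4\varepsilon(\|\Re a\| + \|\Im a\|) \le 8\varepsilon\|a\|$. The principal technical step is the norm-density of positive finite-spectrum elements in $\mathcal{A}_+$: given $\|a - a''\| < \delta$ with $a \ge 0$ and $a''$ self-adjoint of finite spectrum, the bound $a'' \ge -\delta\, 1_\mathcal{A}$ forces $\|(a'')_-\| \le \delta$, hence $\|a - (a'')_+\| \le 2\delta$. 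Apart from this functional-calculus check, everything reduces to linearity, the identity on projections, and the two standard real/imaginary and positive/negative part decompositions, each responsible for one factor of $2$ in the final constant $8$.
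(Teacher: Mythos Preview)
Your proof is correct and follows essentially the same route as the paper's: the identity $f(e)=\varphi(e,e^\perp)-\varphi(e^\perp,e)$, Abel summation on the spectral decomposition to get the $2\varepsilon\|a\|$ bound on positive finite-spectrum elements, then the positive/negative and real/imaginary decompositions each contributing a factor of $2$. The only cosmetic difference is that the paper approximates at the self-adjoint level (using real rank zero directly) and then splits into $a_+,a_-$, whereas you first pass to $\mathcal{A}_+$ via the positive-part argument you give and approximate there; both orderings are fine.
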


\begin{proof}
Let $a\in\mathcal{A}_+$, and 
assume that $a$ has finite spectrum, say $\{\alpha_1,\ldots,\alpha_n\}$. 
Of course, we can suppose that $0\le\alpha_1<\cdots<\alpha_n=\Vert a\Vert$.
This implies that $a$ can be written in the form
\[
a=\sum_{k=1}^n\alpha_k p_k,
\]
where $p_1,\ldots,p_n\in\mathcal{A}$ are mutually orthogonal projections 
(specifically, the projection $p_k$ is defined by using the continuous functional calculus for $a$ by
$p_k=\chi_{\{\alpha_k\}}(a)$ for each $k\in\{1,\ldots,n\}$ because $\chi_{\{\alpha_k\}}$ is a continuous function on
the spectrum of $a$, being this set finite).
In case (i), we have
\begin{equation}\label{e89}
\begin{split}
\varphi(a,1_\mathcal{A})-\varphi(1_\mathcal{A},a)
&=
\sum_{k=1}^n\alpha_k
\bigl(\varphi(p_k,1_\mathcal{A})-\varphi(1_\mathcal{A},p_k)\bigr)\\
&=
\sum_{k=1}^n\alpha_k
\bigl(\varphi(p_k,p_{k}^\perp+p_k)-\varphi(p_{k}^\perp+p_k,p_k)\bigr)\\
&=
\sum_{k=1}^n\alpha_k
\bigl(\varphi(p_k,p_{k}^\perp)-\varphi(p_{k}^\perp,p_k)\bigr) =0.
\end{split}
\end{equation}
In case (ii),
we define real numbers $\lambda_1,\ldots,\lambda_n\in[0,\infty[$ and projections $e_1,\ldots,e_n\in\mathcal{A}$ by
\begin{align*}
\lambda_1 & =\alpha_1,\\
\lambda_k & =\alpha_k-\alpha_{k-1} \quad (1<k\le n),\\
\intertext{and}
e_k & =p_k+\cdots+p_n\quad (1\le k\le n).
\end{align*}
It is a simple matter to check that
\[
a=\sum_{k=1}^n\lambda_k e_k.
\]
For each $k\in\{1,\ldots,n\}$, we have
\begin{equation*}
\begin{split}
\left\Vert\varphi(e_k,1_\mathcal{A})-\varphi(1_\mathcal{A},e_k)\right\Vert
&=
\left\Vert\varphi(e_k, e_{k}^\perp+e_k)-\varphi({e_k}^\perp+e_k,e_k)\right\Vert\\
&=
\left\Vert\varphi(e_k,e_{k}^\perp)-\varphi(e_{k}^\perp,e_k)\right\Vert\\
&\le
\left\Vert\varphi(e_k,e_{k}^\perp)\right\Vert+
\left\Vert\varphi(e_{k}^\perp,e_k) \right\Vert 
\le 2\varepsilon.
\end{split}
\end{equation*}
We thus get
\begin{equation}\label{e123}
\begin{split}
\left\Vert\varphi(a,1_\mathcal{A})-\varphi(1_\mathcal{A},a)\right\Vert
&=
\left\Vert\sum_{k=1}^n
\lambda_k\bigl(\varphi(e_k,1_\mathcal{A})-\varphi(1_\mathcal{A},e_k)\bigr)\right\Vert\\
&\le
\sum_{k=1}^n\lambda_k
\bigl\Vert\varphi(e_k,1_\mathcal{A})-\varphi(1_\mathcal{A},e_k)\bigr\Vert\\
&\le
\sum_{k=1}^n\lambda_k 2\varepsilon
=
2 \varepsilon \alpha_n 
= 2\varepsilon \Vert a\Vert .
\end{split}
\end{equation}
	
Now suppose that $a\in\mathcal{A}_{\textup{sa}}$ and that $a$ has finite spectrum.
Then we can write $a=a_+-a_-$, where $a_+$, $a_-\in\mathcal{A}_+$ are mutually orthogonal. 
Since $a_+=f(a)$ and $a_-=g(a)$, where $f$, $g\colon\mathbb{R}\to\mathbb{R}$ are the continuous functions defined by
	\[
	f(t)=\max\{t,0\}, \quad g(t)=\max\{-t,0\}, \quad (t\in\mathbb{R}),
	\]
it follows that both $a_+$ and $a_-$ have finite spectra.
In case (i), \eqref{e89} gives
\begin{equation}\label{e90}
\varphi(a,1_\mathcal{A})-\varphi(1_\mathcal{A},a)=
\bigl(\varphi(a_+,1_\mathcal{A})-\varphi(1_\mathcal{A},a_+)\bigr)-
\bigl(\varphi(a_-,1_\mathcal{A})-\varphi(1_\mathcal{A},a_-)\bigr)=0.
\end{equation}
In case (ii),
on account of \eqref{e123}, we have
\begin{equation}\label{e91}
\begin{split}
\left\Vert\varphi(a,1_\mathcal{A})-\varphi(1_\mathcal{A},a)\right\Vert
& \le
\left\Vert\varphi(a_+,1_\mathcal{A})-\varphi(1_\mathcal{A},a_+)\right\Vert \\
&\quad {}+ \left\Vert\varphi(a_-,1_\mathcal{A})-\varphi(1_\mathcal{A},a_-)\right\Vert \\
& \le 2\varepsilon\Vert a_+\Vert+2\varepsilon\Vert a_-\Vert
\\ 
& \le 
4\varepsilon\max\{\Vert a_+\Vert,\Vert a_-\Vert\}=
4\varepsilon\Vert a\Vert.
\end{split}
\end{equation}

Let $a$ be an arbitrary element of $\mathcal{A}_{\textup{sa}}$.
Since $\mathcal{A}$ has real rank zero, it follows that there exists a sequence $(a_n)$ in 
$\mathcal{A}_{\textup{sa}}$ such that each $a_n$ has finite spectrum and $(a_n)\to a$ with respect
to the norm topology.
In case (i), \eqref{e90} gives
\begin{equation*}
\varphi(a,1_\mathcal{A})-\varphi(1_\mathcal{A},a)=
\lim_{n\to\infty}
\bigl(
\varphi(a_n,1_\mathcal{A})-\varphi(1_\mathcal{A},a_n)
\bigr)=0.
\end{equation*}
In case (ii), 
from \eqref{e91} it follows that 
\[
\Vert\varphi(a_n,1_\mathcal{A})-\varphi(1_\mathcal{A},a_n)\Vert\le 4\varepsilon\Vert a_n\Vert\quad (n\in\mathbb{N}),
\]
and the continuity of $\varphi$ now yields
\[
\begin{split}
\left\Vert\varphi(a,1_\mathcal{A})-\varphi(1_\mathcal{A},a) \right\Vert & =
\lim_{n \to \infty} \left\Vert\varphi(a_n,1_\mathcal{A})-\varphi(1_\mathcal{A},a_n)\right\Vert \\
& \le 4\varepsilon\lim_{n \to \infty} \Vert a_n\Vert=
4\varepsilon\Vert a\Vert.
\end{split}
\]

Finally set  $a\in\mathcal{A}$, and write $a=\Re a+i\Im a$ as in \eqref{z1}.
From the previous step we see that,
in case (i),
\begin{equation*}
\varphi(a,1_\mathcal{A})-\varphi(1_\mathcal{A},a)=
\varphi(\Re a,1_\mathcal{A})-\varphi(1_\mathcal{A},\Re a) 
+
\varphi(\Im a,1_\mathcal{A})-\varphi(1_\mathcal{A},\Im a)=0,
\end{equation*}
and, in case (ii),
\begin{equation*}
\begin{split}
\Vert\varphi(a,1_\mathcal{A})-\varphi(1_\mathcal{A},a)\Vert
& \le \left\Vert\varphi(\Re a,1_\mathcal{A})-\varphi(1_\mathcal{A},\Re a)\right\Vert \\
& \quad {}+
\left\Vert\varphi(\Im a,1_\mathcal{A})-\varphi(1_\mathcal{A},\Im a) \right\Vert
\\ 
& \le 4\varepsilon\Vert\Re a\Vert+4\varepsilon\Vert\Im a\Vert
\le
8\varepsilon\Vert a\Vert,
\end{split}
\end{equation*}
giving the result.
\end{proof}

\begin{corollary}\label{t1947}
Let $\mathcal{A}$ be a $C^*$-algebra,
let $\mathcal{Z}$ be a normed space, 
let $\varphi\colon\mathcal{A}\times\mathcal{A}\to\mathcal{Z}$ be a continuous bilinear map, and
let the constant $\varepsilon\ge 0$ be such that
\begin{equation*}
a,b\in\mathcal{A}, \ ab=0 \ \Longrightarrow \ \Vert\varphi(a,b)\Vert\le\varepsilon\Vert a\Vert\Vert b\Vert.
\end{equation*}
Then
\begin{equation*}
\Vert\varphi(ab,c)-\varphi(a,bc)\Vert\le 8\varepsilon\Vert a\Vert\Vert b\Vert\Vert c\Vert
\quad (a,b,c\in\mathcal{A}).
\end{equation*}
Further, if either $\mathcal{A}$ is unital or $\mathcal{Z}$ is a dual Banach space, 
then there exists a continuous linear map $\Phi\colon\mathcal{A}\to\mathcal{Z}$ such that
\begin{equation*}
\Vert\varphi(a,b)-\Phi(ab)\Vert\le 8\varepsilon\Vert a\Vert\Vert b\Vert
\quad (a,b\in\mathcal{A})
\end{equation*}
and $\Vert\Phi\Vert\le\Vert\varphi\Vert$.
\end{corollary}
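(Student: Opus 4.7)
The plan is to derive the inequality $\|\varphi(ab,c)-\varphi(a,bc)\|\le 8\varepsilon\|a\|\|b\|\|c\|$ by applying Theorem~\ref{t1946} to a two-parameter family of auxiliary bilinear maps, and then to construct $\Phi$ either by the naive formula $\Phi(a):=\varphi(a,1_\mathcal{A})$ in the unital case, or by a pre-adjoint construction when $\mathcal{Z}$ is a dual Banach space.

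For the associativity-type identity, fix $a,c\in\mathcal{A}$ and consider the continuous bilinear map $\psi_{a,c}\colon\mathcal{A}\times\mathcal{A}\to\mathcal{Z}$ given by $\psi_{a,c}(x,y):=\varphi(ax,yc)$. If $x,y\in\mathcal{A}_+$ satisfy $xy=0$, then $(ax)(yc)=a(xy)c=0$, so the hypothesis on $\varphi$ gives $\|\psi_{a,c}(x,y)\|\le\varepsilon\|a\|\|c\|\|x\|\|y\|$; thus Theorem~\ref{t1946} applies to $\psi_{a,c}$ with $\varepsilon$ replaced by $\varepsilon\|a\|\|c\|$. Pick a bounded approximate identity $(e_j)_{j\in J}$ for $\mathcal{A}$, which converges weak* to $1_{\mathcal{A}^{**}}$. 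For any $b\in\mathcal{A}$, Theorem~\ref{t1946} gives
\[
\bigl\|\lim_{j}\varphi(ab,e_jc)-\lim_{j}\varphi(ae_j,bc)\bigr\|\le 8\varepsilon\|a\|\|b\|\|c\|.
\]
Because $(e_j)$ is a two-sided approximate identity, $e_jc\to c$ and $ae_j\to a$ in norm, so the two nets actually converge in norm to $\varphi(ab,c)$ and $\varphi(a,bc)$, respectively, yielding the asserted estimate.

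For the existence of $\Phi$, the unital case is immediate: set $\Phi(a):=\varphi(a,1_\mathcal{A})$, so that $\|\Phi\|\le\|\varphi\|$, and take $c:=1_\mathcal{A}$ in the step above. If instead $\mathcal{Z}=\mathcal{Y}^*$ is a dual Banach space, for each $a\in\mathcal{A}$ and $y\in\mathcal{Y}$ the functional $f_{a,y}\in\mathcal{A}^*$ defined by $f_{a,y}(b):=\langle\varphi(a,b),y\rangle$ satisfies $\|f_{a,y}\|\le\|\varphi\|\|a\|\|y\|$. Define $\Phi(a)\in\mathcal{Y}^*=\mathcal{Z}$ by $\langle\Phi(a),y\rangle:=\langle 1_{\mathcal{A}^{**}},f_{a,y}\rangle$. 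Linearity of $\Phi$ in $a$ and the bound $\|\Phi\|\le\|\varphi\|$ are straightforward, and the construction identifies $\Phi(a)$ with the weak* limit in $\mathcal{Z}$ of the net $\varphi(a,e_j)$. Applying the associativity identity with $c:=e_j$ gives $\|\varphi(ab,e_j)-\varphi(a,be_j)\|\le 8\varepsilon\|a\|\|b\|$; since $\varphi(ab,e_j)\to\Phi(ab)$ weak* in $\mathcal{Z}$ and $\varphi(a,be_j)\to\varphi(a,b)$ in norm (because $be_j\to b$ in norm), weak* lower semicontinuity of the norm in a dual Banach space yields $\|\Phi(ab)-\varphi(a,b)\|\le 8\varepsilon\|a\|\|b\|$.

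The main obstacle is this last step: the analysis via Theorem~\ref{t1946} naturally produces limits only in $\mathcal{Z}^{**}$, and extracting a genuinely $\mathcal{Z}$-valued $\Phi$ is exactly what the pre-adjoint construction, legitimized by the dual-space hypothesis, is designed to accomplish.
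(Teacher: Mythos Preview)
Your argument is correct and, for the associativity estimate and the unital case, essentially identical to the paper's: both fix $a,c$, form $\psi(u,v)=\varphi(au,vc)$, apply Theorem~\ref{t1946}, and then use that $ae_j\to a$ and $e_jc\to c$ in norm to identify the weak* limits as $\varphi(a,bc)$ and $\varphi(ab,c)$. One small point: the paper is more cautious and extracts a subnet of the approximate identity that converges weak* to $1_{\mathcal{A}^{**}}$, whereas you assert this for the full net. Your assertion is in fact true (e.g.\ via Cohen factorisation $\mathcal{A}^*=\mathcal{A}\cdot\mathcal{A}^*$), but a one-line justification would not hurt.

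The genuinely different part is the dual-space case. The paper fixes an ultrafilter $\mathcal{U}$ on $J$ refining the order filter and sets $\Phi(a)=\lim_{\mathcal{U}}\varphi(a,e_j)$ in the weak* topology of $\mathcal{Z}$, then uses \eqref{e1608} to pass the norm estimate through the ultrafilter limit. You instead define $\Phi(a)$ directly by the pairing $\langle\Phi(a),y\rangle=\langle 1_{\mathcal{A}^{**}},f_{a,y}\rangle$, which amounts to the \emph{honest} weak* limit of $\varphi(a,e_j)$ in $\mathcal{Z}=\mathcal{Y}^*$ (no subnet or ultrafilter needed), and then invoke weak* lower semicontinuity of the norm. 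Your route is a bit cleaner: it avoids the ultrafilter machinery entirely and makes transparent why the limit lands in $\mathcal{Z}$ rather than $\mathcal{Z}^{**}$. The paper's ultrafilter approach, on the other hand, is more robust in situations where one only knows convergence along a subnet, but that extra generality is not needed here.
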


\begin{proof}
Set $a,c\in\mathcal{A}$, and consider the continuous bilinear map
$\psi\colon\mathcal{A}\times\mathcal{A}\to\mathcal{Z}$ 
defined by
\[
\psi(u,v)=\varphi(au,vc) 
\quad 
(u,v\in\mathcal{A}).
\]
If $u,v\in A$ are such that $uv=0$, then
$au,vc\in\mathcal{A}$ and $(au)(vc)=0$, which gives 
\[
\Vert\psi(u,v)\Vert=
\Vert\varphi(au,vc)\Vert\le
\varepsilon\Vert au\Vert\Vert vc\Vert\le
\varepsilon\Vert a\Vert\Vert c\Vert  \Vert u\Vert\Vert v\Vert
\]
by hypothesis.

Let $(e_i)_{i\in I}$ be an approximate identity for $\mathcal{A}$ of bound one.
Since the net $(e_i)_{i\in I}$ is bounded, it has a subnet
$(e_j)_{j\in J}$ which converges to an element $E\in\mathcal{A}^{**}$ with
respect to the weak* topology. We claim that $E=1_{\mathcal{A}^{**}}$.
Indeed, let $a\in\mathcal{A}$. Then $(ae_j)_{j\in J}\to a$ with respect to the 
norm topology and, further, $(ae_j)_{j\in J}\to aE$ with respect to the weak*
topology. Thus $aE=a$. From the weak* density of $\mathcal{A}$ in $\mathcal{A}^{**}$ 
and the separate weak* continuity of the product of $\mathcal{A}^{**}$, we conclude 
that $AE=A$ for each $A\in\mathcal{A}^{**}$, hence that $E=1_{\mathcal{A}^{**}}$, as claimed.

From Theorem~\ref{t1946} we deduce that,
for each $b\in\mathcal{A}$,
the nets $(\psi(b,e_j))_{j\in J}$ and $(\psi(e_j,b))_{j\in J}$ converge in $\mathcal{Z}^{**}$
 with respect to the weak* topology and
\begin{equation}\label{e1829}
\bigl\Vert\lim_{j\in J}\psi(b,e_j)-\lim_{j\in J}\psi(e_j,b)\bigr\Vert\le 
8(\varepsilon\Vert a\Vert\Vert c\Vert)\Vert b\Vert.
\end{equation}
Since $(ae_j)_{j\in J}$ converges to $a$ and $(e_jc)_{j\in J}$ converges to $c$ in norm and $\varphi$ is continuous, it follows that
\[
\lim_{j\in J} \psi(b,e_j)= \lim_{j\in J}\varphi(ab,e_jc) = \varphi(ab,c)
\]
and
\[
\lim_{j\in J} \psi(e_j,b)= \lim_{j\in J} \varphi(ae_j,bc) = \varphi(a,bc)
\]
in norm for each $b\in\mathcal{A}$, which establishes the required inequality when combined with~\eqref{e1829}.
 
Now suppose that $\mathcal{A}$ is unital, and
define $\Phi\colon\mathcal{A}\to\mathcal{Z}$ by
\[
\Phi(a)=\varphi(a,1_\mathcal{A})
\quad
(a\in\mathcal{A}).
\]
Then $\Phi$ is a continuous linear map, and clearly $\Vert\Phi\Vert\le\Vert\varphi\Vert$.
For each $a$, $b\in\mathcal{A}$, we have
\[
\Vert\varphi(a,b)-\Phi(ab)\Vert=
\Vert\varphi(a,b1_\mathcal{A})-\varphi(ab,1_\mathcal{A})\Vert\le
8\varepsilon\Vert a\Vert\Vert b\Vert,
\]
as claimed.

Finally suppose that $\mathcal{Z}$ is the dual of a Banach space $\mathcal{Z}_*$.
Let $\mathcal{U}$ be an ultrafilter on $J$ containing the order filter on $J$.
It follows from the Banach-Alaoglu theorem that each bounded subset of $\mathcal{Z}$ is relatively
compact with respect to the weak* topology on $\mathcal{Z}$. Consequently, each bounded net
$(z_j)_{j\in J}$ in $\mathcal{Z}$ has a unique limit with respect to the weak* topology along the ultrafilter
$\mathcal{U}$, and we write $\lim_\mathcal{U} z_j$ for this limit.
Further, it is worth noting that
\begin{equation}\label{e1608}
\Bigl\Vert\lim_\mathcal{U}z_j\Bigr\Vert\le\lim_\mathcal{U}\Vert z_j\Vert.
\end{equation}
Indeed, for each $\zeta\in\mathcal{Z}_*$ such that $\Vert\zeta\Vert=1$, we have
$\vert\langle z_j,\zeta\rangle\vert\le\Vert z_j\Vert$ $(j\in J)$, and hence
\[
\Bigl\vert\Bigl\langle\lim_\mathcal{U}z_j,\zeta\Bigr\rangle\Bigr\vert=
\lim_\mathcal{U}\vert\langle z_j,\zeta\rangle\vert\le\lim_\mathcal{U}\Vert z_j\Vert,
\]
which establishes \eqref{e1608}.

For each $a\in\mathcal{Z}$, we have
\begin{equation}\label{e163}
\Vert\varphi(a,e_j)\Vert\le\Vert\varphi\Vert\Vert a\Vert
\quad
(j\in J),
\end{equation}
and hence the net $\left(\varphi(a,e_j)\right)_{j\in J}$ is bounded. Consequently, we can define
the map $\Phi\colon\mathcal{A}\to\mathcal{Z}$ by
\[
\Phi(a)=\lim_\mathcal{U}\varphi(a,e_j)
\quad
(a\in\mathcal{A}).
\]
The linearity of the limit along an ultrafilter on a topological
linear space gives the linearity of $\Phi$. 
Take $a\in\mathcal{A}$. 
From \eqref{e1608} and \eqref{e163} we deduce that
$\Vert\Phi(a)\Vert\le\Vert\varphi\Vert\Vert a\Vert$,
which gives the continuity of $\Phi$ and $\Vert\Phi\Vert\le\Vert\varphi\Vert$.
Now take  $a,b\in\mathcal{A}$. 
We have
\begin{equation}\label{z2}
\Vert\varphi(ab,e_j)-\varphi(a,be_j)\Vert\le 8\varepsilon\Vert a\Vert\Vert b\Vert
\quad
(j\in J).
\end{equation}
Since $(be_j)_{j\in J}\to b$ in norm, the continuity of $\varphi$ gives $(\varphi(a,be_j))_{j\in J}\to\varphi(a,b)$ in norm.
Since $\mathcal{U}$ refines the order filter on $J$ we see that
$\lim_\mathcal{U}\varphi(a,be_j)=\varphi(a,b)$. 
Taking the limit in \eqref{z2} along the ultrafilter $\mathcal{U}$, and using~\eqref{e1608}, we obtain
$\Vert\Phi(ab)-\varphi(a,b)\Vert
\le 8\varepsilon\Vert a\Vert\Vert b\Vert$,
as required.
\end{proof}

It is worth noting that Corollary~\ref{t1947} 
gives a sharper estimate for the constant of the strong property $\mathbb{B}$
of a $C^*$-algebra to the one given in \cite[Theorem~3.4]{SS}, where 
our constant $8$ is replaced by $384\pi^2(1+\sqrt{2})$. The hyperreflexivity constant given in \cite[Theorem~4.4]{SS} for $C^*$-algebras can be sharpened as well accordingly.

\section{Primary estimates and reflexivity}\label{s2}

\subsection{Homomorphisms between modules over a $C^*$-algebra}

Let $\mathcal{A}$ be a $C^*$-algebra, and
let $\mathcal{X}$ and $\mathcal{Y}$ be quasi-Banach right $\mathcal{A}$-modules.
For a linear map $T\colon\mathcal{X}\to\mathcal{Y}$ and $a\in\mathcal{A}$, 
define linear maps $aT$, $Ta\colon\mathcal{X}\to\mathcal{Y}$ by
\begin{equation}\label{eqm}
(aT)(x)=T(xa), 
\quad 
(Ta)(x)=T(x)a
\quad
(x\in\mathcal{X}).
\end{equation}
Then the space $B(\mathcal{X},\mathcal{Y})$ of all continuous linear maps from $\mathcal{X}$ to $\mathcal{Y}$ 
is a quasi-Banach $\mathcal{A}$-bimodule for the operations specified by \eqref{eqm}.
For $T\in B(\mathcal{X},\mathcal{Y})$,
let $\ad(T)\colon\mathcal{A}\to B(\mathcal{X},\mathcal{Y})$ denote the inner derivation implemented by $T$,
so that
\[
\ad(T)(a)=aT-Ta\quad(a\in\mathcal{A}).
\] 
It is clear that $T$ is a right $\mathcal{A}$-module homomorphism if and only if $\ad(T)=0$, 
and, in the case where $\mathcal{X}$ and $\mathcal{Y}$ are Banach $\mathcal{A}$-modules,
the constant
\begin{equation}\label{def}
\Vert\ad(T)\Vert
\end{equation} 
is intended to estimate the distance from $T$ to the space
$\Hom_\mathcal{A}(\mathcal{X},\mathcal{Y})$
of all continuous module homomorphisms from $\mathcal{X}$ to $\mathcal{Y}$.
This is actually very much in the spirit of \cite{Chr1, Chr2, Chr3}.
We will use several additional alternative ways to estimate the distance 
$\dist(T,\Hom_\mathcal{A}(\mathcal{X},\mathcal{Y}))$ that equally make sense, namely
\begin{equation}\label{alpha}
\alpha(T)=
\sup\bigl\{
\Vert e^\perp Te\Vert : e\in\mathcal{A} \text{ projection}
\bigr\},
\end{equation}
\begin{equation}\label{beta}
\beta(T)=
\sup\bigl\{
\Vert eTf\Vert : e,f\in\mathcal{A} \text{ projections, } ef=0\bigr\},
\end{equation}
\begin{equation}\label{gamma}
\gamma(T)=
\sup\bigl\{
\Vert aTb\Vert : a,b\in\mathcal{A}_+\text{ contractions, } \ ab=0\bigr\},
\end{equation}
\begin{equation}\label{delta}
\delta(T)=
\sup_{x\in\mathcal{X}, \, \Vert x\Vert\le 1}
\inf
\bigl\{\Vert T(x)-\Phi(x)\Vert : \Phi\in\text{Hom}_\mathcal{A}(\mathcal{X},\mathcal{Y})
\bigr\}.
\end{equation}
For \eqref{alpha}, the algebra is supposed to be unital, and this constant is actually very much in the spirit 
of the celebrated Arveson's distance formula~\cite{Arv}.
The significance of the constants~\eqref{alpha} and~\eqref{beta} for our purposes requires
that the $C^*$-algebra $\mathcal{A}$ to be sufficiently rich in projections (as does a $C^*$-algebra
of real rank zero) and they are in the spirit of \cite{L1,L2}. 
We take the constant \eqref{delta} in accordance with \cite{L3,L4}.

Throughout we suppose that the Banach $\mathcal{A}$-modules are contractive.
The statements of our results can be easily adapted to non-contractive Banach modules.

\begin{proposition}\label{p10}
Let $\mathcal{A}$ be a $C^*$-algebra,
let $\mathcal{X}$ and $\mathcal{Y}$ be Banach right $\mathcal{A}$-modules, and
let $T\colon \mathcal{X}\to\mathcal{Y}$ be a continuous linear map. 
\begin{enumerate}
\item[(i)]
$\beta(T)\le\gamma(T)\le\delta(T)\le\dist(T,\Hom_\mathcal{A}(\mathcal{X},\mathcal{Y}))$.
\item[(ii)]
$\gamma(T)\le\Vert\ad(T)\Vert\le 2\dist(T,\Hom_\mathcal{A}(\mathcal{X},\mathcal{Y}))$.
\item[(iii)]
If $\mathcal{A}$ is unital, then 
$\alpha(T)=\beta(T)$.
\item[(iv)]
If $\mathcal{A}$ is unital and has real rank zero, then
$\beta(T)=\gamma(T)$.
\end{enumerate}
\end{proposition}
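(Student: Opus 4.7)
Parts (i)--(iii) amount to bookkeeping with the bimodule operations \eqref{eqm} and I would dispatch them quickly. For (i), $\beta(T)\le\gamma(T)$ is immediate since projections are positive contractions. For $\gamma(T)\le\delta(T)$, I observe that for $a,b\in\mathcal{A}_+$ contractions with $ab=0$ and any $\Phi\in\Hom_\mathcal{A}(\mathcal{X},\mathcal{Y})$, $(a\Phi b)(x)=\Phi(xa)b=\Phi(x)ab=0$, so $(aTb)(x)=(T-\Phi)(xa)\,b$; taking the infimum over $\Phi$ and the supremum over $\|x\|\le 1$ yields the inequality. The last inequality $\delta(T)\le\dist(T,\Hom_\mathcal{A}(\mathcal{X},\mathcal{Y}))$ is the usual pointwise-versus-uniform comparison. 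For (ii), the identity $\ad(T)(b)(xa)=T(xab)-T(xa)b=-(aTb)(x)$ (valid when $ab=0$) gives $\gamma(T)\le\|\ad(T)\|$; the bound $\|\ad(T)\|\le 2\dist(T,\Hom_\mathcal{A}(\mathcal{X},\mathcal{Y}))$ follows because $\ad(\Phi)=0$ for each module homomorphism $\Phi$, combined with the direct estimate $\|\ad(T-\Phi)(a)\|\le 2\|a\|\,\|T-\Phi\|$. For (iii), $\alpha(T)\le\beta(T)$ comes from applying the definition of $\beta(T)$ to the orthogonal pair $(e^\perp,e)$; conversely, if $e,f$ are projections with $ef=0$ then $e\le f^\perp$, so $e=ef^\perp$ and $(eTf)(x)=T(xef^\perp)f=(f^\perp Tf)(xe)$, giving $\|eTf\|\le\|f^\perp Tf\|\le\alpha(T)$.

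The heart of the proposition, and what I expect to be the main obstacle, is (iv). The inequality $\beta(T)\le\gamma(T)$ is already contained in (i); the content is the reverse estimate $\gamma(T)\le\beta(T)$. Given $a,b\in\mathcal{A}_+$ contractions with $ab=0$, the plan is to approximate $a$ and $b$ by positive elements $a_n$ and $b_n$ of finite spectrum in a way that preserves the orthogonality $a_nb_n=0$. I would achieve this by working inside the hereditary $C^*$-subalgebras $\overline{a\mathcal{A}a}$ and $\overline{b\mathcal{A}b}$: both inherit real rank zero from $\mathcal{A}$ by the Brown--Pedersen theorem, and the identity $(ax_1a)(by_1b)=ax_1(ab)y_1b=0$ combined with continuity gives $\overline{a\mathcal{A}a}\cdot\overline{b\mathcal{A}b}=\{0\}$, so the orthogonality is automatic for any approximants chosen in these subalgebras. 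Since $a\in\overline{a\mathcal{A}a}$ (witnessed for instance by $a(a+\tfrac{1}{n})^{-1}a\to a$ in norm) and $\overline{a\mathcal{A}a}$ has real rank zero, positive finite-spectrum approximants $a_n\to a$ exist inside it, and likewise $b_n\to b$ inside $\overline{b\mathcal{A}b}$.

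Writing the spectral decompositions $a_n=\sum_k\alpha_kp_k$ and $b_n=\sum_l\beta_lq_l$ with $\alpha_k,\beta_l>0$ and finite families of mutually orthogonal projections in $\mathcal{A}$, left-multiplication of $a_nb_n=0$ by $p_k$ gives $p_kb_n=0$, and right-multiplication of the latter by $q_l$ forces $p_kq_l=0$ for all $k,l$. Applying the cumulative-projection rewriting from the proof of Theorem~\ref{t1945}, I can express $a_n=\sum_k\lambda_ke_k$ with each $e_k=p_k+p_{k+1}+\cdots$ a projection and $\sum_k\lambda_k=\|a_n\|$, and similarly $b_n=\sum_l\mu_lf_l$ with $\sum_l\mu_l=\|b_n\|$. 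The orthogonalities $p_kq_l=0$ propagate to $e_kf_l=0$, so $\|e_kTf_l\|\le\beta(T)$ by hypothesis. Bilinearity of $(u,v)\mapsto uTv$ then yields
\[
\|a_nTb_n\|=\Big\|\sum_{k,l}\lambda_k\mu_l\,e_kTf_l\Big\|\le\|a_n\|\,\|b_n\|\,\beta(T)\le\beta(T),
\]
and passing to the norm limit gives $\|aTb\|\le\beta(T)$, hence $\gamma(T)\le\beta(T)$.
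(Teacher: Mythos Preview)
Your proof is correct. Parts (i)--(iii) match the paper's treatment essentially line for line (the paper simply declares (i) ``immediate''), and your argument for (iii) via $e=ef^\perp$ is exactly the paper's.

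The interesting comparison is in (iv), where your approximation strategy differs from the paper's. You approximate $a$ and $b$ separately by positive finite-spectrum elements inside the orthogonal hereditary subalgebras $\overline{a\mathcal{A}a}$ and $\overline{b\mathcal{A}b}$, invoking the Brown--Pedersen theorem that real rank zero passes to hereditary subalgebras; orthogonality $a_nb_n=0$ is then automatic from the subalgebra orthogonality. The paper instead approximates the single self-adjoint element $a-b$ by finite-spectrum $c_n\in\mathcal{A}_{\mathrm{sa}}$ and sets $a_n=\tfrac12(|c_n|+c_n)$, $b_n=\tfrac12(|c_n|-c_n)$; these are the positive and negative parts of $c_n$, so $a_nb_n=0$ comes for free from that decomposition, and since $|a-b|=a+b$ one gets $a_n\to a$, $b_n\to b$. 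The paper's trick is more self-contained (no external inheritance theorem needed), while your approach is more structural and makes transparent why the orthogonality survives approximation. Once the approximants are in hand, the cumulative-projection rewriting and the bilinear estimate are identical in both proofs.
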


\begin{proof}

(i)
This is immediate.

(ii)
Let $a,b\in\mathcal{A}_+$ contractions with $ab=0$. 
Then $aTb=(aT-Ta)b$ and therefore
$\Vert aTb\Vert\le\Vert aT-Ta\Vert\le\Vert\ad(T)\Vert$.
We thus get $\gamma(T)\le\Vert\ad(T)\Vert$.

Now take a sequence $(\Phi_n)$ in $\Hom_\mathcal{A}(\mathcal{X},\mathcal{Y})$ such that
\[
\dist 
\bigl(T,\Hom_\mathcal{A}(\mathcal{X},\mathcal{Y})\bigr)=
\lim_{n\to\infty}\Vert T-\Phi_n\Vert.
\]
Let $x\in\mathcal{X}$ and $a\in\mathcal{A}$ with $\Vert x\Vert=\Vert a\Vert=1$. Then
\begin{equation*}
\begin{split}
\left\Vert T(xa)-T(x)a \right\Vert
&= 
\left\Vert T(xa)-T(x)a-(\Phi_n(xa)-\Phi_n(x)a) \right\Vert \\
&\le
\left\Vert T(xa)-\Phi_n(xa)\Vert+\Vert(T(x)-\Phi_n(x))a \right\Vert \\
&\le
2\Vert T-\Phi_n\Vert , 
\end{split}
\end{equation*}
which gives 
$\Vert (aT-Ta)(x)\Vert\le 2\dist \bigl(T,\Hom_\mathcal{A}(\mathcal{X},\mathcal{Y})\bigr)$,
and the second inequality is proved.

(iii)
It suffices to prove that $\beta(T)\le\alpha(T)$.
Let $e,f\in\mathcal{A}$ mutually orthogonal projections. Then $e\le f^\perp$, so that
$eTf=e(f^\perp Tf)$ and therefore 
\[
\Vert eTf\Vert=\Vert e(f^\perp Tf)\Vert\le\Vert f^\perp T f\Vert\le\alpha(T),
\]
which implies that $\beta(T)\le\alpha(T)$, as required.

(iv)
It suffices to show that $\gamma(T)\le\beta(T)$.
Take $a,b\in\mathcal{A}_+$ mutually orthogonal contractions, and
the task is to show that $\Vert aTb\Vert\le\beta(T)$.

First, assume that both $a$ and $b$ have finite spectrum, say
$\{\alpha_1,\ldots,\alpha_m\}$ and $\{\beta_1,\ldots,\beta_n\}$ with
$0\le\alpha_1<\cdots<\alpha_m=\Vert a\Vert\le 1$ and
$0\le\beta_1<\cdots<\beta_n=\Vert b\Vert\le 1$, and write
\[
a=\sum_{j=1}^m\alpha_j p_j,
\quad
b=\sum_{k=1}^n\beta_k q_k,
\]
where $p_1,\ldots,p_m,q_1,\ldots,q_n\in\mathcal{A}$ are mutually orthogonal projections,
and, further,
\[
a=\sum_{j=1}^m\lambda_j e_j,
\quad
b=\sum_{k=1}^n\mu_k f_k,
\]
where $\lambda_1,\ldots,\lambda_m,\mu_1,\ldots,\mu_n\in[0,\infty[$ and $e_1,\ldots,e_m,f_1,\ldots,f_n\in\mathcal{A}$
are defined by
\begin{align*}
\lambda_1 & = \alpha_1,&
\mu_1     & = \beta_1,\\
\lambda_j & = \alpha_j-\alpha_{j-1} \ (1<j\le m),& 
\mu_k     & = \beta_k-\beta_{k-1} \ (1<k\le n),\\
e_j       & = p_j+\cdots+p_m \ (1\le j\le m),&
f_k       & = q_k+\cdots+q_n \ (1\le k\le n),
\end{align*}
as in the proof of Theorem \ref{t1945}. Since $e_jf_k=0$ for all $j\in\{1,\ldots,m\}$
and $k\in\{1,\ldots,n\}$, it follows that
\begin{equation*}
\begin{split}
\Vert aTb\Vert & =
\left\Vert\sum_{j=1}^m\sum_{k=1}^n\lambda_j\mu_k e_jTf_k\right\Vert\le
\sum_{j=1}^m\sum_{k=1}^n\lambda_j\mu_k \left\Vert e_jTf_k\right \Vert \\
& \le \sum_{j=1}^m\sum_{k=1}^n\lambda_j\mu_k\beta(T)=
\Vert a\Vert\Vert b\Vert\beta(T)\le\beta(T),
\end{split}
\end{equation*}
as required.

Now consider the general case.
Since $\mathcal{A}$ has real rank zero, it follows that
there exists a sequence $(c_n)$ in $\mathcal{A}_{\textup{sa}}$ such that
each $c_n$ has finite spectrum and $(c_n)$ converges to $a-b$ in norm.
For each $n\in\mathbb{N}$,
set 
\begin{equation*}
a_n=\tfrac{1}{2}\left(\vert c_n\vert+c_n \right),
\quad
b_n=\tfrac{1}{2} \left(\vert c_n\vert-c_n \right).
\end{equation*}
Since $(\vert c_n\vert)\to a+b$, we see that
$(a_n)\to a$ and $(b_n)\to b$.
Further, 
for each $n\in\mathbb{N}$, 
$a_n$, $b_n\in\mathcal{A}_+$, have finite spectra, and $a_n b_n=0$.
From the previous step we deduce that 
\[
\Vert a_n Tb_n\Vert\le\beta(T)\Vert a_n\Vert\Vert b_n\Vert
\quad
(n\in\mathbb{N}),
\] 
and so, taking limits on both sides of the above inequality, we obtain $\Vert aTb\Vert\le\beta(T)$,
which completes the proof.
\end{proof}

Now let $\mathcal{A}$ be a $C^*$-algebra, and let $\mathcal{X}$ a Banach right $\mathcal{A}$-module.
Then $\mathcal{X}^*$ is a Banach left $\mathcal{A}$-module with respect to the module operation specified by
\begin{equation*}
\langle x,a\phi\rangle=
\langle xa,\phi\rangle
\quad(\phi\in\mathcal{X}^*, \, a\in\mathcal{A}, \, x\in\mathcal{X}).
\end{equation*}
This module has the property that the map
$\phi\mapsto a\phi$ from $\mathcal{X}^*$ to $\mathcal{X}^*$ 
is weak* continuous for each $a\in\mathcal{A}$. 
Similarly, 
if $\mathcal{X}$ is a Banach left $\mathcal{A}$-module, then
$\mathcal{X}^*$ is a Banach right $\mathcal{A}$-module with respect to the module operation specified by
\begin{equation*} 
\langle x,\phi a\rangle=
\langle ax,\phi\rangle 
\quad
(\phi\in\mathcal{X}^*, \, a\in\mathcal{A}, \, x\in\mathcal{X}),
\end{equation*}
and
the map $\phi\mapsto \phi a$ from $\mathcal{X}^*$ to $\mathcal{X}^*$
is weak* continuous for each $a\in\mathcal{A}$.

\begin{theorem}\label{v1636}
Let $\mathcal{A}$ be a $C^*$-algebra, 
let $\mathcal{X}$ and $\mathcal{Y}$ be Banach right $\mathcal{A}$-modules
with $\mathcal{X}$ essential, and
let $T\colon\mathcal{X}\to\mathcal{Y}$ be a continuous linear map.
\begin{enumerate}
\item[(i)]
Suppose that $\left\{y\in\mathcal{Y} : y\mathcal{A}=0\right\}=\{0\}$ 
and that
\begin{equation*}
a,b\in\mathcal{A}_+, \  ab=0 \
\Longrightarrow \  aTb=0.
\end{equation*}
Then $T$ is a right $\mathcal{A}$-module homomorphism.
\item[(ii)]
Suppose that 
$\mathcal{Y}$ satisfies the condition
\[
\Vert y\Vert=\sup\left\{\Vert ya\Vert : a\in\mathcal{A},\ \Vert a\Vert=1\right\}
\]
for each $y\in\mathcal{Y}$.
Then 
\begin{equation*}
\Vert\ad (T)\Vert\le 8
\sup\bigl\{\Vert aTb\Vert: a,b\in\mathcal{A}_+ \text{ contractions, }  ab=0\bigr\}.
\end{equation*}
Further, the above condition holds in each of the following cases:
\begin{enumerate}
	\item[(a)]
	$\mathcal{Y}$ is essential;
	\item[(b)]
	$\mathcal{Y}$ is the dual of an essential Banach left $\mathcal{A}$-module.
\end{enumerate}
\end{enumerate}
\end{theorem}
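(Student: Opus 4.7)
The plan is to reduce both parts to Theorem~\ref{t1946} by introducing a continuous bilinear map that encodes the module-homomorphism defect of $T$ and, crucially, shifts the limits from the $\mathcal{Y}$-side to the $\mathcal{A}$-side, where norm convergence is automatic. Fix contractions $x \in \mathcal{X}$, $a \in \mathcal{A}$, and $c \in \mathcal{A}$, and consider $\psi \colon \mathcal{A} \times \mathcal{A} \to \mathcal{Y}$ defined by $\psi(u,v) = T(xu)\,v\,c$. It is continuous and bilinear, and for $u, v \in \mathcal{A}_+$ with $uv = 0$ the identity $\psi(u,v) = (uTv)(x)\,c$ together with scaling yields $\Vert\psi(u,v)\Vert \le \gamma(T) \Vert u\Vert \Vert v\Vert$, with $\gamma(T)$ as in~\eqref{gamma}.

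Let $(e_j)_{j \in J}$ be a bounded approximate identity of bound one for $\mathcal{A}$, so that $(e_j) \to 1_{\mathcal{A}^{**}}$ in the weak* topology. Theorem~\ref{t1946} applied to $\psi$ then gives
\[
\Bigl\Vert\lim_j \psi(a, e_j) - \lim_j \psi(e_j, a)\Bigr\Vert_{\mathcal{Y}^{**}} \le 8 \gamma(T) \Vert a\Vert.
\]
Both limits can in fact be computed in the norm topology of $\mathcal{Y}$: since $e_j c \to c$ in norm and the right module action on $\mathcal{Y}$ is continuous, $\psi(a, e_j) = T(xa)(e_j c) \to T(xa)\,c$; and since $\mathcal{X}$ is essential, $x e_j \to x$ in norm, and the continuity of $T$ gives $\psi(e_j, a) = T(x e_j)\,a c \to T(x)\,a c$. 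Norm convergence in $\mathcal{Y}$ implies weak* convergence in $\mathcal{Y}^{**}$, so the two weak*-limits above are precisely $T(xa)\,c$ and $T(x)\,a\,c$, whence
\[
\bigl\Vert \bigl(T(xa) - T(x) a\bigr)\, c \bigr\Vert \le 8 \gamma(T) \Vert a\Vert
\]
for every contraction $c \in \mathcal{A}$.

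From this key estimate, both parts follow. For (i), the orthogonality hypothesis forces $\gamma(T) = 0$, so $(T(xa) - T(x)a)\,c = 0$ for every $c \in \mathcal{A}$; the assumption $\{y \in \mathcal{Y} : y\mathcal{A} = 0\} = \{0\}$ then yields $T(xa) = T(x) a$. For (ii), the norming hypothesis applied with $y = T(xa) - T(x) a$ gives $\Vert (aT - Ta)(x)\Vert \le 8 \gamma(T) \Vert a\Vert$, and taking suprema over contractions $x$ and $a$ establishes $\Vert\ad(T)\Vert \le 8\gamma(T)$. It then remains to verify the norming condition in cases (a) and (b). If $\mathcal{Y}$ is essential, $y e_j \to y$ in norm and so $\Vert y\Vert = \lim_j \Vert y e_j\Vert \le \sup_{\Vert a\Vert \le 1} \Vert ya\Vert$. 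If $\mathcal{Y}$ is the dual of an essential Banach left $\mathcal{A}$-module $\mathcal{Y}_*$, then for each contraction $u \in \mathcal{Y}_*$ we have $e_j u \to u$ in norm and $\langle u, y e_j\rangle = \langle e_j u, y\rangle \to \langle u, y\rangle$, whence $|\langle u, y\rangle| \le \sup_j \Vert y e_j\Vert$, giving the same conclusion after taking suprema over $u$.

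The conceptual obstacle is that Theorem~\ref{t1946} delivers its conclusion as a weak* limit in the bidual $\mathcal{Y}^{**}$, not in $\mathcal{Y}$ itself, and the right module action of $\mathcal{A}$ on $\mathcal{Y}^{**}$ need not be separately weak* continuous in a usable way. The decisive trick is to absorb an auxiliary contraction $c \in \mathcal{A}$ into the bilinear map: this converts the otherwise problematic factor $T(xa)\,e_j$ into $T(xa)\,(e_j c)$, so that the approximate identity effectively acts on the $\mathcal{A}$-side (where $e_j c \to c$ in norm) and both nets collapse back into $\mathcal{Y}$. The extra hypothesis in (ii) is then precisely what is required to recover $\Vert T(xa) - T(x) a\Vert$ from the estimates on the various $(T(xa) - T(x)a)\,c$.
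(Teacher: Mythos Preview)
Your argument is correct and follows essentially the same strategy as the paper: apply Theorem~\ref{t1946} to a bilinear map encoding the module defect of $T$, insert the auxiliary factor $c$ so that the relevant nets converge in norm rather than merely weak*, and arrive at the key inequality $\bigl\Vert (T(xa)-T(x)a)c\bigr\Vert\le 8\gamma(T)\Vert a\Vert$ (the paper's \eqref{ea923}), from which (i), (ii), and the verification of cases (a) and (b) proceed exactly as you describe. The only difference is packaging---the paper takes its bilinear map $\varphi(a,b)=aTb$ into $B(\mathcal{X},\mathcal{Y})$ and then evaluates at $x\in\mathcal{X}$ and at functionals $b\phi\in\mathcal{Y}^*$, whereas you build $x$ and $c$ directly into a $\mathcal{Y}$-valued map $\psi$, which is arguably a little cleaner.
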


\begin{proof}
Define the continuous bilinear map $\varphi\colon\mathcal{A}\times\mathcal{A}\to B(\mathcal{X},\mathcal{Y})$ by
\[
\varphi(a,b)=aTb
\quad
(a,b\in\mathcal{A}),
\]
and set $\varepsilon=\sup\bigl\{\Vert aTb\Vert: a,b\in\mathcal{A}_+\text{ contractions, } ab=0\bigr\}$.
Then $\Vert\varphi(a,b)\Vert\le\varepsilon\Vert a\Vert\Vert b\Vert$
whenever $a,b\in\mathcal{A}_+$ are such that $ab=0$.

Let $(e_i)_{i\in I}$ be an approximate identity for $\mathcal{A}$ of bound one.
As in the proof of Corollary~\ref{t1947}, we see that $(e_i)_{i\in I}$ has a subnet
$(e_j)_{j\in J}$ which converges to $1_{\mathcal{A}^{**}}$ in $\mathcal{A}^{**}$ with
respect to the weak* topology.

From Theorem~\ref{t1946} it follows that, for each $a\in\mathcal{A}$, the nets 
$(\varphi(a,e_j))_{j\in J}$ and $(\varphi(e_j,a))_{j\in J}$ converge in 
$B(\mathcal{X},\mathcal{Y})^{**}$ with respect to the weak* topology and
\begin{equation*}
\bigl\Vert\lim_{j\in J}\varphi(a,e_j)-\lim_{j\in J}\varphi(e_j,a)\bigr\Vert\le 
8\varepsilon \Vert a\Vert,
\end{equation*}
whence
\begin{equation*}
\bigl\Vert\lim_{j\in J}
aTe_j-\lim_{j\in J}e_jTa\bigr\Vert\le 
8\varepsilon \Vert a\Vert.
\end{equation*}
In particular, for each 
$x\in\mathcal{X}$,
$a,b\in\mathcal{A}$, and 
$\phi\in\mathcal{Y}^*$, 
the net 
\[
\bigl(\langle b\phi, (aTe_j-e_jTa)(x)\rangle\bigr)_{j\in J}=
\bigl(\langle \phi, T(xa)e_jb-T(xe_j)ab\rangle\bigr)_{j\in J}
\] 
converges and
\begin{equation}\label{ea922}
\left\vert\lim_{j\in J}\langle \phi, T(xa)e_jb-T(xe_j)ab\rangle\right\vert
\le8\varepsilon\Vert a\Vert\Vert b\Vert\Vert x\Vert\Vert\phi\Vert.
\end{equation}
Since $\mathcal{X}$ is essential, it follows that $(xe_j)_{j\in J}\to x$ in norm for each $x\in\mathcal{X}$. 
Thus \eqref{ea922} gives
\begin{equation*}
\bigl\vert
\langle\phi, T(xa)b-T(x)ab\rangle
\bigr\vert\le
8\varepsilon\Vert a\Vert\Vert b\Vert\Vert x\Vert\Vert\phi\Vert
\end{equation*}
for all
$x\in\mathcal{X}$,
$a,b\in\mathcal{A}$, and 
$\phi\in\mathcal{Y}^*$, 
and hence
\begin{equation}\label{ea923}
\Vert (T(xa)-T(x)a)b\Vert\le
8\varepsilon\Vert a\Vert\Vert b\Vert\Vert x\Vert
\end{equation}
for all
$x\in\mathcal{X}$, and
$a,b\in\mathcal{A}$.

(i) 
In this case, we have $\varepsilon=0$ and,
for each $x\in\mathcal{X}$ and $a\in\mathcal{A}$, 
\eqref{ea923} gives
\[
\bigl(T(xa)-T(x)a\bigr)b=0
\quad(b\in\mathcal{A}),
\]
which yields $T(xa)=T(x)a$.
Hence $T$ is a right $\mathcal{A}$-module homomorphism.

(ii)
In this case, for each $x\in\mathcal{X}$ and $a\in\mathcal{A}$, 
\eqref{ea923} gives
\[
\Vert T(xa)-T(x)a\Vert\le 8\varepsilon\Vert a\Vert\Vert x\Vert,
\]
so that $\Vert\ad (T)\Vert\le 8\varepsilon$, as claimed.

Now suppose that $\mathcal{Y}$ satisfies either of the additional assumptions (a) or (b);
we will prove that
$\Vert y\Vert=\sup\left\{\Vert ya\Vert : a\in\mathcal{A},\ \Vert a\Vert=1\right\}$ for each $y\in\mathcal{Y}$.
Take $y\in\mathcal{Y}$, and
set $\alpha=\sup\left\{\Vert ya\Vert: a\in\mathcal{A}, \ \Vert a\Vert=1\right\}$.
It is clear that $\alpha\le \Vert y\Vert$.

In case (a),
since $(ye_j)_{j\in J}\to y$, it follows that $(\Vert ye_j\Vert)_{j\in J}\to\Vert y\Vert$,
and consequently $\Vert y\Vert\le\alpha$.

In case (b),
$\mathcal{Y}$ is the dual of an essential Banach left $\mathcal{A}$-module $\mathcal{Y}_*$.
Take $\varepsilon>0$, and let
$\phi\in\mathcal{Y}_*$ with $\Vert \phi\Vert=1$ and
$\Vert y\Vert-\varepsilon<\left\vert\langle \phi,y\rangle\right\vert$.
Then $(e_j\phi)_{j\in J}\to \phi$ and the continuity of $\phi$ gives
$(\langle \phi,ye_j\rangle)_{j\in J}=
(\langle e_j\phi,y\rangle)_{j\in J}\to \langle \phi,y\rangle$,
which implies that there exists $j\in J$ such that
$\Vert y\Vert-\varepsilon<\vert\langle \phi, ye_j\rangle\vert\le\alpha$.
We thus get $\Vert y\Vert\le\alpha+\varepsilon$ for each $\varepsilon>0$,
and hence $\Vert y\Vert\le\alpha$, which completes the proof.
\end{proof}

\begin{theorem}\label{v1637}
Let $\mathcal{A}$ be a unital $C^*$-algebra of real rank zero,
let $\mathcal{X}$ and $\mathcal{Y}$ be unital quasi-Banach right $\mathcal{A}$-modules, and
let $T\colon\mathcal{X}\to\mathcal{Y}$ be a continuous linear map.
\begin{enumerate}
\item[(i)]
Suppose that
\begin{equation*}
e\in\mathcal{A} \text{ projection }\Longrightarrow \ e^\perp Te=0.
\end{equation*}
Then $T$ is a right $\mathcal{A}$-module homomorphism.
\item[(ii)]
Suppose that $\mathcal{X}$ and $\mathcal{Y}$ are Banach modules. 
Then
\begin{equation*}
\Vert\ad (T)\Vert\le 8
\sup\bigl\{\Vert e^\perp Te\Vert : e\in\mathcal{A}\text{ projection}\bigr\}.
\end{equation*}
\end{enumerate}
\end{theorem}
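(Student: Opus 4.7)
The plan is to reduce the statement directly to Theorem \ref{t1945} by examining the continuous bilinear map
$\varphi\colon\mathcal{A}\times\mathcal{A}\to B(\mathcal{X},\mathcal{Y})$ given by
$\varphi(a,b)=aTb$, following the strategy used in Theorem \ref{v1636}. Two preliminary observations drive the whole argument. First, because $\mathcal{X}$ and $\mathcal{Y}$ are \emph{unital} modules, one has $(1_\mathcal{A}T)(x)=T(x\cdot 1_\mathcal{A})=T(x)$ and $(T\cdot 1_\mathcal{A})(x)=T(x)\cdot 1_\mathcal{A}=T(x)$, so $\varphi(a,1_\mathcal{A})=aT$ and $\varphi(1_\mathcal{A},a)=Ta$; consequently $\varphi(a,1_\mathcal{A})-\varphi(1_\mathcal{A},a)=\ad(T)(a)$. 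Second, substituting $e\leftrightarrow e^\perp$ (which preserves the set of projections), the hypothesis on $e^\perp T e$ translates into an identical hypothesis on $\varphi(e,e^\perp)=eTe^\perp$, since $\sup\{\|e^\perp Te\| : e \text{ projection}\}=\sup\{\|eTe^\perp\| : e \text{ projection}\}$.

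For part (i), the vanishing condition $e^\perp Te=0$ becomes $\varphi(e,e^\perp)=0$ for every projection $e\in\mathcal{A}$. Since $B(\mathcal{X},\mathcal{Y})$ is a quasi-Banach, hence topological vector, space, Theorem \ref{t1945}(i) applies and yields $\varphi(a,1_\mathcal{A})=\varphi(1_\mathcal{A},a)$ for every $a\in\mathcal{A}$; by the first observation this is exactly $aT=Ta$, so $T$ is a right $\mathcal{A}$-module homomorphism. For part (ii), $\mathcal{X}$ and $\mathcal{Y}$ being Banach modules guarantees that $B(\mathcal{X},\mathcal{Y})$ is a normed space, so Theorem \ref{t1945}(ii) applies to $\varphi$ with the same constant $\varepsilon=\sup\{\|e^\perp Te\| : e\text{ projection}\}$, giving
\[
\|\ad(T)(a)\|=\|\varphi(a,1_\mathcal{A})-\varphi(1_\mathcal{A},a)\|\le 8\varepsilon\|a\|
\quad(a\in\mathcal{A}),
\]
which is the desired inequality.

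Because Theorem \ref{t1945} does all the heavy lifting, I do not foresee any genuine obstacle: the proof amounts to writing down $\varphi$ and reading off both conclusions. The only points requiring a line of verification are that $B(\mathcal{X},\mathcal{Y})$ inherits the right ambient structure in each case (a topological vector space structure when the modules are merely quasi-Banach, and a normed space structure when they are Banach) and that the unitality of the modules guarantees $\varphi(a,1_\mathcal{A})-\varphi(1_\mathcal{A},a)$ really coincides with $\ad(T)(a)$ rather than only with some compression of it.
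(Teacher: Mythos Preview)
Your proposal is correct and follows essentially the same approach as the paper's own proof: define $\varphi(a,b)=aTb$ and read off both parts from Theorem~\ref{t1945}. Your explicit remarks about the substitution $e\leftrightarrow e^\perp$ (so that the hypothesis on $e^\perp Te$ becomes the hypothesis on $\varphi(e,e^\perp)=eTe^\perp$) and about unitality ensuring $\varphi(a,1_\mathcal{A})-\varphi(1_\mathcal{A},a)=\ad(T)(a)$ simply make explicit what the paper leaves tacit.
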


\begin{proof}
Define the continuous bilinear map $\varphi\colon\mathcal{A}\times\mathcal{A}\to B(\mathcal{X},\mathcal{Y})$ by
\[
\varphi(a,b)=aTb
\quad
(a,b\in\mathcal{A}).
\]

(i)
In this case, $\varphi(e,e^\perp)=0$ for each projection $e\in\mathcal{A}$. 
Consequently, 
Theorem~\ref{t1945}(i) shows that $\varphi(a,1_\mathcal{A})=\varphi(1_\mathcal{A},a)$
for each $a\in\mathcal{A}$, which gives $aT=Ta$, and 
this is precisely the assertion of (i).

(ii)
Set $\varepsilon=\sup\bigl\{\Vert e^\perp Te\Vert : e\in\mathcal{A}\text{ projection}\bigr\}$.
Then $\Vert\varphi(e,e^\perp)\Vert\le\varepsilon$ for each projection $e\in\mathcal{A}$,
and Theorem~\ref{t1945}(ii) now shows that 
\[
\Vert aT-Ta\Vert=
\Vert\varphi(a,1_\mathcal{A})-\varphi(1_\mathcal{A},a)\Vert
\le 8\varepsilon\Vert a\Vert
\] 
for each $a\in\mathcal{A}$. We thus get $\Vert\ad (T)\Vert\le 8\varepsilon$, as claimed.
\end{proof}

\begin{theorem}\label{t43}
Let $\mathcal{A}$ be a $C^*$-algebra of real rank zero,
let $\mathcal{X}$ and $\mathcal{Y}$ be Banach right $\mathcal{A}$-modules with $\mathcal{X}$ essential, and
let $T\colon\mathcal{X}\to\mathcal{Y}$ be a continuous linear map.
\begin{enumerate}
\item[(i)]
Suppose that $\left\{y\in\mathcal{Y} : y\mathcal{A}=0\right\}=\{0\}$ and that
\begin{equation*}
e,f\in\mathcal{A}\text{ projections, } ef=0 \ \Longrightarrow \ eTf=0.
\end{equation*} 
Then $T$ is a right $\mathcal{A}$-module homomorphism.
\item[(ii)]
Suppose that 
$\mathcal{Y}$ satisfies the condition
\[
\Vert y\Vert=\sup\left\{\Vert ya\Vert : a\in\mathcal{A},\ \Vert a\Vert=1\right\}
\]
for each $y\in\mathcal{Y}$. Then 
\begin{equation*}
\Vert\ad (T)\Vert\le 8
\sup\bigl\{
\Vert eTf\Vert : e,f\in\mathcal{A} \text{ projections, } ef=0\bigr\}.
\end{equation*}
Further, the above condition holds in each of the following cases:
\begin{enumerate}
\item[(a)]
$\mathcal{Y}$ is essential;
\item[(b)]
$\mathcal{Y}$ is the dual of an essential Banach left $\mathcal{A}$-module.
\end{enumerate}
\end{enumerate}
\end{theorem}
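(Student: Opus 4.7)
My plan is to reduce the projection-based hypothesis of this theorem to the positive-contraction hypothesis of Theorem~\ref{v1636}, after which that theorem will deliver the conclusions directly. Set $\varepsilon=\sup\{\Vert eTf\Vert : e,f\in\mathcal{A}\text{ projections, } ef=0\}$, so that $\varepsilon=0$ in case (i). The target of the reduction is the statement: $\Vert aTb\Vert\le\varepsilon\Vert a\Vert\Vert b\Vert$ for every pair of orthogonal contractions $a,b\in\mathcal{A}_+$. Once that is granted, Theorem~\ref{v1636}(i) gives (i) and Theorem~\ref{v1636}(ii) gives (ii), with the conditions (a) and (b) on $\mathcal{Y}$ carrying over verbatim.

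The reduction itself essentially reproduces the argument of Proposition~\ref{p10}(iv), which, despite being stated there for unital $\mathcal{A}$, does not actually use unitality in the relevant portion. First, if $a,b\in\mathcal{A}_+$ have finite spectra and $ab=0$, their spectral projections $p_j$ (for $a$) and $q_k$ (for $b$) are mutually orthogonal. Writing $a=\sum_j\alpha_j p_j$, $b=\sum_k\beta_k q_k$ and performing the Abel-type rearrangement with $e_j=p_j+\cdots+p_m$, $f_k=q_k+\cdots+q_n$ produces non-negative scalars $\lambda_j,\mu_k$ and projections $e_j,f_k\in\mathcal{A}$ satisfying $e_jf_k=0$, with $a=\sum_j\lambda_j e_j$ and $b=\sum_k\mu_k f_k$. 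Expanding $aTb$ bilinearly yields
\[
\Vert aTb\Vert\le\sum_{j,k}\lambda_j\mu_k\Vert e_jTf_k\Vert\le\varepsilon\Vert a\Vert\Vert b\Vert.
\]
To handle arbitrary orthogonal positive contractions $a,b$, I would use real rank zero to pick $c_n\in\mathcal{A}_{\textup{sa}}$ of finite spectrum with $c_n\to a-b$ in norm; then $a_n=\tfrac{1}{2}(\vert c_n\vert+c_n)$ and $b_n=\tfrac{1}{2}(\vert c_n\vert-c_n)$ are orthogonal positive elements of finite spectrum converging in norm to $a$ and $b$ respectively, and passing to the limit in the finite-spectrum estimate furnishes the desired bound.

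I do not expect a genuinely hard step, but the point worth scrutinising is that the Abel decomposition produces projections $e_j,f_k$ intrinsic to $\mathcal{A}$, namely sums of spectral projections of $a$ and $b$, never involving any ambient unit. Consequently the argument is valid in the non-unital setting demanded by the theorem's statement. All other ingredients, including the role of $\mathcal{X}$ being essential and the two alternative conditions on $\mathcal{Y}$, are imported unchanged from Theorem~\ref{v1636}.
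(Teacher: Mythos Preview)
Your proposal is correct and takes a genuinely different route from the paper. You reduce to Theorem~\ref{v1636} by first establishing $\Vert aTb\Vert\le\varepsilon\Vert a\Vert\Vert b\Vert$ for orthogonal positive $a,b$ via the Abel-rearrangement and finite-spectrum approximation of Proposition~\ref{p10}(iv), correctly observing that this argument nowhere uses a unit (the spectral projections for nonzero eigenvalues lie in $\mathcal{A}$, and the zero-eigenvalue terms contribute nothing). The paper instead exploits real rank zero differently: it picks an approximate identity $(e_j)$ consisting of projections, applies Theorem~\ref{t1945}(ii) in each unital corner $e_j\mathcal{A}e_j$ to obtain $\Vert aTe_j-e_jTa\Vert\le 8\varepsilon\Vert a\Vert$ for $a\in e_j\mathcal{A}e_j$, and then passes to the limit using essentiality of $\mathcal{X}$. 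Your approach is more modular and economical---it recycles Proposition~\ref{p10}(iv) and then invokes Theorem~\ref{v1636} as a black box---whereas the paper's approach avoids re-proving the $\beta(T)=\gamma(T)$ step but requires the extra structural fact that real rank zero $C^*$-algebras admit projection-valued approximate identities, and essentially re-derives the endgame of Theorem~\ref{v1636} by hand.
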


\begin{proof}
Take an approximate identity $(e_j)_{j\in J}$ for $\mathcal{A}$ consisting of projections.
Fix $j\in J$,  define the continuous bilinear map 
$\varphi_j\colon e_j\mathcal{A}e_j\times e_j\mathcal{A}e_j\to B(\mathcal{X},\mathcal{Y})$
by
\[
\varphi_j(a,b)=aTb
\quad
(a,b\in e_j\mathcal{A}e_j),
\]
and set $\varepsilon=\sup\bigl\{\Vert eTf\Vert: e,f\in\mathcal{A}\text{ projections, } ef=0\bigr\}$.
Then $e_j\mathcal{A}e_j$ is a unital $C^*$-algebra (with unit $e_j$) and has real rank zero.
Further, $\Vert\varphi(e,e_j-e)\Vert\le\varepsilon$ for each projection $e\in e_j\mathcal{A}e_j$.
From Theorem~\ref{t1945}(ii) it follows that
\[
\Vert aTe_j-e_jTa\Vert=
\Vert\varphi(a,e_j)-\varphi(e_j,a)\Vert
\le 8\varepsilon\Vert a\Vert
\]
for each $a\in e_j\mathcal{A}e_j$.
Hence
\begin{equation}\label{e66}
\left\Vert T(xe_jae_j)e_jb-T(xe_j)e_jae_jb \right\Vert
\le 8\varepsilon\Vert x\Vert\Vert a\Vert\Vert b\Vert
\quad
(j\in J, \, x\in\mathcal{X}, \, a,b\in\mathcal{A}).
\end{equation}
For each $x\in\mathcal{X}$ and $a,b\in\mathcal{A}$, we have
\begin{itemize}
\item 
$(e_jae_j)_{j\in J}\to a$ and $(e_jb)_{j\in J}\to b$ in norm, so that
(using the continuity of $T$)
$(T(xe_jae_j)e_jb)_{j\in J}\to T(xa)b$ in norm;
\item 
$(xe_j)_{j\in J}\to x$ in norm, because $\mathcal{X}$ is essential,
and $(e_jae_jb)_{j\in J}\to ab$ in norm, and hence (using the continuity of $T$)
$(T(xe_j)e_jae_jb)_{j\in J}\to T(x)ab$ in norm.
\end{itemize}
Thus, 
taking the limit in \eqref{e66} we see that
\[
\Vert T(xa)b-T(x)ab\Vert\le 8\varepsilon\Vert x\Vert\Vert a\Vert \Vert b\Vert
\quad
(x\in\mathcal{X}, \, a,b\in\mathcal{A}).
\]
The rest of the proof goes through as for Theorem~\ref{v1636} 
(from inequality \eqref{ea923} on).
\end{proof}

\begin{corollary}\label{t42}
Let $\mathcal{A}$ be a $C^*$-algebra, and
let $\mathcal{X}$ and $\mathcal{Y}$ be Banach right $\mathcal{A}$-modules.
Suppose that $\mathcal{X}$ is essential and that
$\{y\in\mathcal{Y} : y\mathcal{A}=0\}=\{0\}$.
Then the space $\Hom_\mathcal{A}(\mathcal{X},\mathcal{Y})$ is reflexive.
\end{corollary}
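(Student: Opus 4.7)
The plan is to reduce Corollary~\ref{t42} to Theorem~\ref{v1636}(i), which under the standing hypotheses on $\mathcal{X}$ and $\mathcal{Y}$ already guarantees that a continuous linear map $T\colon\mathcal{X}\to\mathcal{Y}$ is a right $\mathcal{A}$-module homomorphism as soon as $aTb=0$ for every pair $a,b\in\mathcal{A}_+$ with $ab=0$. Thus the work is entirely in showing that the reflexive closure condition forces exactly this orthogonality vanishing.

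First, I would fix $T\in B(\mathcal{X},\mathcal{Y})$ such that $T(x)\in\overline{\{S(x):S\in\Hom_\mathcal{A}(\mathcal{X},\mathcal{Y})\}}$ for every $x\in\mathcal{X}$, and aim to prove $T\in\Hom_\mathcal{A}(\mathcal{X},\mathcal{Y})$. Unfolding the module actions \eqref{eqm} on $B(\mathcal{X},\mathcal{Y})$, the operator $aTb$ acts by $(aTb)(x)=T(xa)b$. So the condition to verify is that $T(xa)b=0$ for every $x\in\mathcal{X}$ whenever $a,b\in\mathcal{A}_+$ satisfy $ab=0$.

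Next, I would carry out the key computation. Let $a,b\in\mathcal{A}_+$ with $ab=0$, let $x\in\mathcal{X}$, and let $\varepsilon>0$. By the hypothesis on $T$, there exists $S\in\Hom_\mathcal{A}(\mathcal{X},\mathcal{Y})$ with $\Vert T(xa)-S(xa)\Vert<\varepsilon$. Since $S$ is a right $\mathcal{A}$-module homomorphism, $S(xa)b=S(x)ab=0$, and therefore
\[
\Vert T(xa)b\Vert=\Vert T(xa)b-S(xa)b\Vert\le\Vert T(xa)-S(xa)\Vert\,\Vert b\Vert<\varepsilon\Vert b\Vert.
\]
Letting $\varepsilon\to 0$ yields $T(xa)b=0$, i.e.\ $(aTb)(x)=0$; as $x$ was arbitrary, $aTb=0$.

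Finally, I would invoke Theorem~\ref{v1636}(i), whose hypotheses ($\mathcal{X}$ essential and $\{y\in\mathcal{Y}:y\mathcal{A}=0\}=\{0\}$) are precisely the standing assumptions of the corollary, together with the orthogonality vanishing just verified. This gives $T\in\Hom_\mathcal{A}(\mathcal{X},\mathcal{Y})$, and since the reverse inclusion
\[
\Hom_\mathcal{A}(\mathcal{X},\mathcal{Y})\subseteq\bigl\{T\in B(\mathcal{X},\mathcal{Y}):T(x)\in\overline{\{S(x):S\in\Hom_\mathcal{A}(\mathcal{X},\mathcal{Y})\}}\ (x\in\mathcal{X})\bigr\}
\]
is trivial, the proof is complete. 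There is no real obstacle here; the only subtlety worth flagging is that the reflexivity of the space $\Hom_\mathcal{A}(\mathcal{X},\mathcal{Y})$ is automatic once one recognises that the pointwise approximation condition survives the right multiplication by $b$, and that this is exactly the input required by Theorem~\ref{v1636}(i).
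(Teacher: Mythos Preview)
Your proof is correct and follows essentially the same approach as the paper: both arguments show that the pointwise approximation condition forces $aTb=0$ for $a,b$ with $ab=0$ (via the identity $S(xa)b=S(x)ab=0$ for module homomorphisms $S$) and then invoke Theorem~\ref{v1636}(i). The only cosmetic difference is that the paper phrases the approximation with a convergent sequence rather than an $\varepsilon$-argument, and checks the vanishing for arbitrary $a,b\in\mathcal{A}$ with $ab=0$ rather than just positive ones, which is slightly more than Theorem~\ref{v1636}(i) requires.
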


\begin{proof}
Take
$T\in B(\mathcal{X},\mathcal{Y})$ such that
\[
T(x)\in\overline{
\left\{\Phi(x) : \Phi\in\Hom_\mathcal{A}(\mathcal{X},\mathcal{Y}) 
\right\}
}
\quad
(x\in\mathcal{X}).
\]
Let $a,b\in\mathcal{A}$ be such that $ab=0$. We claim that $aTb=0$.
For each $x\in\mathcal{X}$, there exists a sequence $(\Phi_n)$ in 
$\Hom_\mathcal{A}(\mathcal{X},\mathcal{Y})$ such that
$(\Phi_n(xa))\to T(xa)$ in norm, and hence
\[
(aTb)(x)=
T(xa)b=\lim_{n\to\infty}\Phi_n(xa)b=
\lim_{n\to\infty}\Phi_n(xab)=0,
\]
which proves our claim.

From Theorem~\ref{v1636}(i), it follows that 
$T\in \Hom_\mathcal{A}(\mathcal{X},\mathcal{Y})$.
\end{proof}

\begin{corollary}\label{t44}
Let $\mathcal{A}$ be a unital $C^*$-algebra of real rank zero, and
let $\mathcal{X}$ and $\mathcal{Y}$ be unital quasi-Banach right $\mathcal{A}$-modules.
Then the space $\Hom_\mathcal{A}(\mathcal{X},\mathcal{Y})$ is reflexive.
\end{corollary}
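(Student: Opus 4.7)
The plan is to mirror the argument of Corollary~\ref{t42}, with Theorem~\ref{v1637}(i) taking the role of Theorem~\ref{v1636}(i). The real rank zero hypothesis ensures that testing only against projections of $\mathcal{A}$ is enough to recover the module homomorphism property, in the same way that positive elements sufficed in the setting of Corollary~\ref{t42}.

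First I would fix an operator $T\in B(\mathcal{X},\mathcal{Y})$ satisfying
\[
T(x)\in\overline{\left\{\Phi(x) : \Phi\in\Hom_\mathcal{A}(\mathcal{X},\mathcal{Y})\right\}}
\quad (x\in\mathcal{X}),
\]
with the goal of deducing $T\in\Hom_\mathcal{A}(\mathcal{X},\mathcal{Y})$. By Theorem~\ref{v1637}(i), it suffices to verify that $e^\perp Te=0$ for every projection $e\in\mathcal{A}$.

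To this end, fix a projection $e\in\mathcal{A}$ and an arbitrary $x\in\mathcal{X}$. Applying the above closure condition to the vector $xe^\perp$ yields a sequence $(\Phi_n)$ in $\Hom_\mathcal{A}(\mathcal{X},\mathcal{Y})$ with $\Phi_n(xe^\perp)\to T(xe^\perp)$. Since right multiplication by $e$ is continuous on the quasi-Banach module $\mathcal{Y}$ and each $\Phi_n$ is a right $\mathcal{A}$-module homomorphism, we compute
\[
(e^\perp Te)(x)=T(xe^\perp)e=\lim_{n\to\infty}\Phi_n(xe^\perp)e=\lim_{n\to\infty}\Phi_n(xe^\perp e)=0,
\]
using $e^\perp e=0$ in the last equality. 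Hence $e^\perp Te=0$, and Theorem~\ref{v1637}(i) then yields $T\in\Hom_\mathcal{A}(\mathcal{X},\mathcal{Y})$.

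No genuine obstacle arises: the heavy lifting is packaged into Theorem~\ref{v1637}(i), which itself rests on Theorem~\ref{t1945}(i) and the real rank zero assumption. The only point worth noting is that $\mathcal{X}$ and $\mathcal{Y}$ are quasi-Banach rather than Banach modules; however, contractivity of the right action guarantees continuity of right multiplication by $e$, so the limit manipulation in the displayed computation is fully legitimate.
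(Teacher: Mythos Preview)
Your proof is correct and follows exactly the approach indicated in the paper: the paper's own proof simply states that the argument of Corollary~\ref{t42} goes through with Theorem~\ref{v1637}(i) in place of Theorem~\ref{v1636}(i), and you have spelled out precisely those details.
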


\begin{proof}
This follows by the same method as in Corollary~\ref{t42}, with Theorem~\ref{v1636}(i) replaced by Theorem~\ref{v1637}(i).
\end{proof}

\subsection{Homomorphisms between non-commutative $L^p$-spaces}

Let $\mathcal{M}$ be a von Neumann algebra.
For each $0<p\le\infty$, the space $L^p(\mathcal{M})$ is a contractive Banach $\mathcal{M}$-bimodule
or a contractive $p$-Banach $\mathcal{M}$-bimodule according to $p\geq 1$ or $p<1$.
More generally, if $0<p, q, r\le\infty$ are such that 
$\tfrac{1}{p}+\tfrac{1}{q}=\tfrac{1}{r}$
(we adopt throughout the convention that $\tfrac{1}{\infty}=0$), then  
\begin{equation*}
x\in L^p(\mathcal{M}), \
y\in L^q(\mathcal{M})  \
\Longrightarrow \
xy\in L^r(\mathcal{M}) \text{ and }
\Vert xy\Vert_r\le\Vert x\Vert_p\Vert y\Vert_q.
\end{equation*}
This is the non-commutative H\"older's inequality.
From now on we confine attention to
the right $\mathcal{M}$-module structure of $L^p(\mathcal{M})$. 

\begin{theorem}\label{cont1}
Let $\mathcal{M}$ be a von Neumann algebra,
let $0<p,q\le \infty$, and
let $T\colon L^p(\mathcal{M})\to L^q(\mathcal{M})$ be a linear map.
Suppose that the map $e^\perp Te\colon L^p(\mathcal{M})\to L^q(\mathcal{M})$
is continuous for each projection $e\in\mathcal{M}$.
Then $T$ is continuous.
\end{theorem}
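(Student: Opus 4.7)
The natural approach is the closed graph theorem. Since $L^p(\mathcal{M})$ and $L^q(\mathcal{M})$ are complete metrizable topological vector spaces (Banach for $p,q\ge 1$, quasi-Banach otherwise), $T$ is continuous if and only if its graph is closed. So suppose $x_n\to 0$ in $L^p(\mathcal{M})$ and $T(x_n)\to y$ in $L^q(\mathcal{M})$; the goal is $y=0$.

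Fix a projection $e\in\mathcal{M}$. The hypothesis gives $(e^\perp T e)(x_n)=T(x_ne^\perp)e\to 0$. Combined with $T(x_n)e\to ye$ (continuity of right multiplication on $L^q(\mathcal{M})$) and the decomposition $T(x_n)e=T(x_ne)e+T(x_ne^\perp)e$, we deduce $T(x_ne)e\to ye$. Since $L^q(\mathcal{M})$ is an essential right $\mathcal{M}$-module (every $z$ is the norm-limit of $ze_\lambda$ along a net of projections $e_\lambda\uparrow 1$), it suffices to verify that $ye=0$ for every projection $e\in\mathcal{M}$, equivalently $T(x_ne)e\to 0$ for every $e$.

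For this last step, the plan is to regard $T^{(e)}\colon z\mapsto T(z)e$ as a linear map $L^p(\mathcal{M})e\to L^q(\mathcal{M})e$ that inherits an analog of the theorem's hypothesis over the corner algebra $e\mathcal{M}e$: for any projection $f\le e$, one has $(e-f)f=0$, so $T(z(e-f))f=(f^\perp T f)(z(e-f))$ is a composition of continuous maps, giving continuity of $(e-f)T^{(e)}f$ on $L^p(\mathcal{M})e$. The hypothesis thus self-reduces on ever-finer corners of $\mathcal{M}$, and one iterates to progressively split the diagonal $T(x_ne)e$ into pieces whose off-diagonal parts vanish in the limit. The hard part will be closing the argument at atomic corners where no further subdivision is possible: there one uses the hypothesis at projections $g\in\mathcal{M}$ outside $e\mathcal{M}e$ — for $g$ orthogonal to an atomic $e$ and $z=ze$ one has $zg^\perp=z$, so $T(z)g=(g^\perp T g)(z)$ is continuous in $z\in L^p(\mathcal{M})e$ — combined with a uniform-boundedness argument on the family $\{e^\perp T e\}_{e}$ and the non-degeneracy of the trace pairing to force the putative diagonal limit to vanish.
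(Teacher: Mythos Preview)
Your reduction is circular and the iterative scheme does not close. You correctly deduce $T(x_ne)e\to ye$ for every projection $e$, and then propose to show $ye=0$ for all $e$; but taking $e=1_{\mathcal{M}}$ this is exactly the original claim $y=0$, so nothing has been gained. The ``self-reduction'' on corners has the same defect: passing from $e$ to a subprojection $f\le e$ peels off a continuous off-diagonal piece but leaves the diagonal block $T(x_nf)f$, which is of the same uncontrolled type you started with. The recursion never terminates and produces no estimate. At an atomic $e$ the situation is no better: $e\mathcal{M}e=\mathbb{C}e$ says nothing about $L^p(\mathcal{M})e$, which is typically infinite-dimensional, and knowing that $z\mapsto T(z)g$ is continuous on $L^p(\mathcal{M})e$ for every $g\perp e$ does not by itself control $T(z)e$. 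Finally, the ``uniform-boundedness argument on the family $\{e^\perp Te\}_e$'' cannot be invoked: you have no pointwise bound on $\{T(xe^\perp)e:e\}$ for fixed $x$, since $T$ is not yet known to be bounded, so the Banach--Steinhaus theorem does not apply.

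The paper's proof supplies precisely the missing global ingredient. It works with the separating space $\mathcal{S}(T)$ and shows it is a closed right $\mathcal{M}$-submodule of $L^q(\mathcal{M})$ (using, as you do, that $eT-Te$ is continuous for each projection $e$). The annihilator $\mathcal{I}(T)=\{a\in\mathcal{M}:\mathcal{S}(T)a=0\}$ is then a weak*-closed two-sided ideal, hence $\mathcal{I}(T)=e\mathcal{M}$ for a central projection $e$. The crucial step --- which your sketch has no substitute for --- is the \emph{stability lemma} from automatic continuity theory: applied to the commuting-modulo-continuous pairs $(R_n,S_n)$ given by right multiplication by a decreasing sequence of projections, it forces the nest $\mathcal{S}(T)f_n$ to stabilise, and this yields $\dim e^\perp\mathcal{M}<\infty$. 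From there $Te^\perp$ is continuous on a finite-dimensional corner, whence $e^\perp\in\mathcal{I}(T)$ and $e=1_{\mathcal{M}}$, so $\mathcal{S}(T)=0$. The stability lemma is doing the heavy lifting that your corner-splitting cannot.
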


\begin{proof}
We first observe that
$eT-Te=eTe^\perp-e^\perp Te$ is continuous
for each projection $e\in\mathcal{M}$.
	
Now we consider the separating space of $T$, which is defined by 
\[
\mathcal{S}(T)=
\left\{y\in L^q(\mathcal{M}) :
\text{there exists } (x_n)\to 0\text{ in }L^p(\mathcal{M})\text{ with }(T(x_n))\to y\right\}.
\]
It is an immediate restatement of the closed graph theorem that $T$ is continuous 
if and only if $\mathcal{S}(T)=0$.
	
We claim that $\mathcal{S}(T)$ is a closed right submodule of $L^q(\mathcal{M})$.
By \cite[Proposition~5.1.2]{D}, $\mathcal{S}(T)$ is a closed subspace of $L^q(\mathcal{M})$.
Let $y\in\mathcal{S}(T)$, and let $e$ be a projection in $\mathcal{M}$.  
Take a sequence $(x_n)$ in $L^p(\mathcal{M})$ with $\lim x_n=0$ and $\lim T(x_n)=y$.
Then $\lim x_ne=0$ and, using the first observation,
\[
T(x_ne)=(eT-Te)(x_n)+T(x_n)e\to ye.
\]
Thus $ye\in\mathcal{S}(T)$.
Now let $a\in\mathcal{M}$ be an arbitrary element. Then there exists
a sequence $(a_n)$ in $\mathcal{M}$ such that each $a_n$ is a linear combination
of projections and $\lim a_n=a$. 
Since $\mathcal{S}(T)$ is a closed subspace of $L^q(\mathcal{M})$, it follows that
$ya_n\in\mathcal{S}(T)$ $(n\in\mathbb{N})$ and that
$ya=\lim ya_n\in\mathcal{S}(T)$.
Hence $\mathcal{S}(T)$ is a right submodule of $L^q(\mathcal{M})$, as claimed.
	
We now consider the subspace $\mathcal{I}(T)$ defined by
\[
\mathcal{I}(T)=\left\{a\in\mathcal{M} : \mathcal{S}(T)a=0 \right\}.
\]
It is clear that $\mathcal{I}(T)$ is a closed right ideal of $\mathcal{M}$.
Further, since $\mathcal{S}(T)$ is a right submodule of $L^q(\mathcal{M})$,
it follows immediately that $\mathcal{I}(T)$ is also a left ideal of $\mathcal{M}$.
Our next goal is to prove that $\mathcal{I}(T)$ is weak* closed in $\mathcal{M}$.
Take $y\in L^q(\mathcal{M})$, and let $s_r(y)$ be the right support projection of $y$.
Then
\[
\left\{a\in\mathcal{M} : ya=0 \right\} =
\left\{a\in\mathcal{M} : s_r(y)a=0\right\}
\]
(see \cite[Fact 1.2(ii)]{RX}) and, since $s_r(y)\in\mathcal{M}$, this latter set is clearly weak* closed in $\mathcal{M}$.
Since
\[
\mathcal{I}(T)=\bigcap_{y\in\mathcal{S}(T)} \left\{a\in\mathcal{M} : ya=0 \right\},
\]
we conclude that $\mathcal{I}(T)$ is weak* closed.

Since $\mathcal{I}(T)$ is a weak* closed two-sided ideal of $\mathcal{M}$,
it follows that there exists a central projection $e\in\mathcal{M}$ such
that 
\begin{equation*}
\mathcal{I}(T)=e\mathcal{M}.
\end{equation*}
We now claim that 
$\dim e^\perp\mathcal{M}<\infty$.
Assume towards a contradiction that $\dim e^\perp\mathcal{M}=\infty$.
Then we can take a sequence $(e_n)$ of non-zero mutually orthogonal projections 
in  $e^\perp\mathcal{M}$.
For $n\in\mathbb{N}$, we define the projection $f_n\in\mathcal{M}$ by
\[
f_n=\bigvee_{k\ge n} e_k,
\]
and consider the maps $R_n\in B(L^p(\mathcal{M}),L^p(\mathcal{M}))$
and $S_n\in B(L^q(\mathcal{M}), L^q(\mathcal{M}))$ defined by
\[
R_n(x)=x f_n, \quad
S_n(y)=y f_n  \quad
\left(x\in L^p(\mathcal{M}), \ y\in L^q(\mathcal{M}) \right).
\]
Our next objective is to apply a fundamental result about the separating space:
the so-called stability lemma.
By hypothesis, $TR_n-S_nT$ is continuous for each $n\in\mathbb{N}$, and hence,
by \cite[Corollary~5.2.7]{D},
$\bigl(\overline{S_1\cdots S_n(\mathcal{S}(T))}\bigr)$
is a nest in $L^q(\mathcal{M})$ which stabilizes. Since $f_{n+1}\le f_n$ for each $n\in\mathbb{N}$,
it follows that $S_1\cdots S_n=S_n$, and hence  that
\[
\overline{S_1\cdots S_n(\mathcal{S}(T))}=
\overline{\mathcal{S}(T)f_n}=
\mathcal{S}(T)f_n
\]
for each $n\in\mathbb{N}$.
Thus there exists $N\in\mathbb{N}$ such that
\begin{equation*}
\mathcal{S}(T)f_N=\mathcal{S}(T)f_n
\quad
(N\le n).
\end{equation*}
In particular, since $f_Ne_N=e_N$ and $f_{N+1}e_N=0$, we have
\begin{equation*}
\mathcal{S}(T)e_N=
\bigl(\mathcal{S}(T)f_N\bigr)e_N=
\bigl(\mathcal{S}(T)f_{N+1}\bigr)e_N=0.
\end{equation*}
Hence $e_N\in\mathcal{I}(T)=e\mathcal{M}$. 
But this is a contradiction of the facts that 
$e_N\in e^\perp\mathcal{M}$ and $e_N\ne 0$.

Our next claim is that the map 
$Te^\perp\colon L^p(\mathcal{M})\to L^q(\mathcal{M})$
is continuous. 
Since the projection $e^\perp$ is central, we see that 
$e^\perp x=xe^\perp$ for each $x\in L^p(\mathcal{M})$,
and hence
$e^\perp L^p(\mathcal{M})=e^\perp L^p(\mathcal{M})e^\perp$.
Moreover, \cite[Fact~1.4]{RX} shows that the subspace
$e^\perp L^p(\mathcal{M}) e^\perp$ is isometrically isomorphic to
$L^p(e^\perp\mathcal{M}e^\perp)$.
Since $\dim e^\perp\mathcal{M}<\infty$, it follows that
$\dim L^p(e^\perp\mathcal{M}e^\perp)<\infty$, so that
$\dim e^\perp L^p(\mathcal{M})<\infty$.
Thus the restriction of $T$ to the subspace $e^\perp L^p(\mathcal{M})$
is continuous, and hence the map 
\[
e^\perp T\colon  L^p(\mathcal{M})\to L^q(\mathcal{M}), \ x\mapsto T(x e^\perp)
\]
is continuous.
On the other hand, 
\[
Te^\perp=e^\perp T-(e^\perp T-Te^\perp),
\]
which implies  that $T e^\perp$ is continuous, as claimed.

Finally, we are in a position to prove the continuity of $T$.
From the above claim we deduce that 
$\mathcal{S}(T)e^\perp=0$, and hence that 
$e^\perp\in\mathcal{I}(T)=e\mathcal{M}$.
This implies that $e^\perp=0$, 
whence $1_\mathcal{M}=e\in\mathcal{I}(T)$, 
which gives $\mathcal{S}(T)=\mathcal{S}(T)1_\mathcal{M}=0$ and $T$ is continuous.
\end{proof}

\begin{corollary}
Let $\mathcal{M}$ be a von Neumann algebra, 
let $0< p,q\le\infty$, and
let $T\colon L^p(\mathcal{M})\to L^q(\mathcal{M})$ be a right $\mathcal{M}$-module homomorphism.
Then $T$ is continuous.
\end{corollary}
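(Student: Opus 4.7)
The plan is to reduce the statement directly to Theorem~\ref{cont1}. That theorem says $T\colon L^p(\mathcal{M})\to L^q(\mathcal{M})$ is continuous as soon as each of the maps $e^\perp T e$ (for $e\in\mathcal{M}$ a projection) is continuous. So the only thing I need to check is that, under the module homomorphism hypothesis, these ``off-diagonal'' pieces $e^\perp T e$ are well-behaved, and in fact trivially so.

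First I would unwind the definition of the bimodule action on $B(L^p(\mathcal{M}),L^q(\mathcal{M}))$ introduced in Section~\ref{s2}, namely $(aT)(x)=T(xa)$ and $(Ta)(x)=T(x)a$. Applying this with $a=e^\perp$ on the left and $a=e$ on the right gives
\begin{equation*}
(e^\perp T e)(x)=T(xe^\perp)\,e\quad(x\in L^p(\mathcal{M})).
\end{equation*}
Since $T$ is a right $\mathcal{M}$-module homomorphism, $T(xe^\perp)=T(x)e^\perp$, and hence $(e^\perp T e)(x)=T(x)e^\perp e=0$.

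Consequently $e^\perp T e$ is the zero map, which is continuous, for every projection $e\in\mathcal{M}$. Theorem~\ref{cont1} then applies verbatim and yields the continuity of $T$. There is no real obstacle here beyond correctly tracking the bimodule conventions; all of the substantive work (the separating space analysis, the central projection trick, the use of the stability lemma) has already been carried out in the proof of Theorem~\ref{cont1}, and the hypothesis on module homomorphisms is precisely strong enough to feed into that machinery in its most favorable form.
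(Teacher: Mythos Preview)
Your proof is correct and is exactly the paper's approach: the paper simply remarks that ``$T$ satisfies the requirement in Theorem~\ref{cont1}'' and concludes, while you have written out the one-line verification that $e^\perp T e=0$ which the paper leaves implicit. There is nothing to add.
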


\begin{proof}
It is clear that $T$ satisfies the requirement in Theorem~\ref{cont1} and hence $T$ is continuous.
\end{proof}

Suppose that $0<p,q,r\le\infty$ are such that 
$\tfrac{1}{p}+\tfrac{1}{r}=\tfrac{1}{q}$.
By H\"older's inequality, for each $\xi\in L^r(\mathcal{M})$, 
we can define the left composition map $L_\xi\colon L^p(\mathcal{M})\to L^q(\mathcal{M})$ by
\[
L_\xi(x)=\xi x
\quad
(x\in L^p(\mathcal{M})).
\]
Further $L_\xi$ is continuous with $\Vert L_\xi\Vert\le\Vert \xi\Vert_r$,
and it is obvious that $L_\xi$ is a right $\mathcal{M}$-module homomorphism.

\begin{theorem}\label{js}
Let $\mathcal{M}$ be a von Neumann algebra, and
let $0< p,q\le\infty$.
\begin{enumerate}
\item[(i)]
Suppose that $p\ge q$, and let $0<r\le\infty$ be such that $\tfrac{1}{p}+\tfrac{1}{r}=\tfrac{1}{q}$.
Then the map 
\[
\xi\mapsto L_\xi, \ 
L^r(\mathcal{M})\to\Hom_\mathcal{M}\bigl(L^p(\mathcal{M}),L^q(\mathcal{M})\bigr)
\] 
is an isometric linear bijection.
\item[(ii)]
Suppose that $p<q$ and that $\mathcal{M}$ has no minimal projection. 
Then 
\[
\Hom_\mathcal{M}\bigl(L^p(\mathcal{M}),L^q(\mathcal{M})\bigr)=\left\{0\right\}.
\]
\end{enumerate}
\end{theorem}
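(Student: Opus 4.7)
The plan separates into (i) (isometry plus surjectivity of $\xi\mapsto L_\xi$) and (ii) (triviality of the hom-space when $p<q$).

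For the isometry in (i) when $p>q$, the bound $\Vert L_\xi\Vert\le\Vert\xi\Vert_r$ is immediate from the non-commutative H\"older inequality stated before the theorem. For the reverse, I would use the polar decomposition $\xi=u|\xi|$ with $u\in\mathcal{M}$ a partial isometry and $|\xi|\in L^r(\mathcal{M})_+$, and test $L_\xi$ against $x=\Vert\xi\Vert_r^{-r/p}|\xi|^{r/p}\in L^p(\mathcal{M})$, which has $\Vert x\Vert_p=1$. The relation $1/p+1/r=1/q$ rewrites as $1+r/p=r/q$, so $\xi x=\Vert\xi\Vert_r^{-r/p}\,u|\xi|^{r/q}$, whose $L^q$-norm is precisely $\Vert\xi\Vert_r$. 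The boundary case $p=q$ (so $r=\infty$) is handled by an analogous test-vector argument using a spectral projection of $|\xi|$ of norm close to $\Vert\xi\Vert_\infty$ together with a $D^{1/p}$-type normalization. Injectivity is automatic from the isometry.

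For surjectivity in (i), given $T\in\Hom_\mathcal{M}(L^p(\mathcal{M}),L^q(\mathcal{M}))$, the aim is to produce $\xi\in L^r(\mathcal{M})$ with $T=L_\xi$. The case $p=\infty$ is immediate: $L^p(\mathcal{M})=\mathcal{M}$, $r=q$, and $\xi:=T(1_\mathcal{M})\in L^q(\mathcal{M})$ satisfies $T(a)=T(1\cdot a)=T(1)a=\xi a$ for all $a\in\mathcal{M}$. For $p<\infty$, I would work inside the Haagerup model: fix a normal semifinite faithful weight $\varphi$ on $\mathcal{M}$ with density $D\in L^1(\mathcal{M})_+$, so that $\mathcal{M}D^{1/p}$ is norm-dense in $L^p(\mathcal{M})$. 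The right-module identity forces strong compatibility among the values $T(aD^{1/p})$ and identifies them all as $\xi\cdot aD^{1/p}$ for a single element $\xi$, extracted as a weak$^*$ limit along a net of truncations by projections of finite trace; the isometry already established guarantees $\xi\in L^r(\mathcal{M})$ with $\Vert\xi\Vert_r\le\Vert T\Vert$. The identity $T=L_\xi$ then holds on the dense subspace and extends by continuity. The main obstacle here is the bookkeeping around modular twists in the Haagerup framework and verifying that the extracted candidate actually converges to an element of $L^r(\mathcal{M})$.

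For (ii), argue by contradiction. Suppose $T\in\Hom_\mathcal{M}(L^p(\mathcal{M}),L^q(\mathcal{M}))$ is non-zero and $p<q$. Pick $y\in L^p(\mathcal{M})$ with $T(y)\ne 0$. The plan is to find, for each $N\in\mathbb{N}$, a projection $e\in\mathcal{M}$ (produced by spectral analysis of $|T(y)|$ relative to the Haagerup trace $\tau$ on the crossed product, possibly after approximation by $\mathcal{M}$-projections) and a constant $a>0$ with $|T(y)|e\ge ae$ and $\tau(e)<1/N$. The key chain of estimates is then
\[
a\,\tau(e)^{1/q}\le\Vert T(y)e\Vert_q=\Vert T(ye)\Vert_q\le\Vert T\Vert\,\Vert ye\Vert_p\le C\,\tau(e)^{1/p},
\]
where the last bound follows after truncating $y$ to a bounded piece. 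Rearranging yields $a\le C\Vert T\Vert\,\tau(e)^{1/p-1/q}$. The absence of minimal projections in $\mathcal{M}$ propagates to every corner $f\mathcal{M}f$, so sub-projections of arbitrarily small $\tau$-size can be produced while the lower bound $|T(y)|\cdot(\text{sub-projection})\ge a\cdot(\text{sub-projection})$ is preserved via compression and operator monotonicity of the square root. Since $1/p-1/q>0$, the right-hand side tends to zero, contradicting $a>0$. The main obstacle will be carrying the spectral-projection reasoning through Haagerup's $L^p$ framework, where such projections naturally live in the crossed product and must be related back to projections in $\mathcal{M}$ to allow the module-hom identity to be applied.
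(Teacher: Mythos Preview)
The paper's proof defers almost everything to Junge--Sherman~\cite{JS}: surjectivity in (i) is \cite[Theorem~2.5]{JS}, part~(ii) is \cite[Corollary~2.7]{JS}, and even the isometry in the boundary case $r=\infty$ cites \cite[Lemma~2.1]{JS}. The only thing argued in detail is the isometry for $p,r<\infty$, and there the paper uses exactly your test vector $\vert\xi\vert^{r/p}$; the separate case $p=\infty$ is handled by evaluating at $1_\mathcal{M}$. So on the part the paper actually proves, your approach agrees.

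Where you go further than the paper and sketch direct proofs, the surjectivity outline is reasonable in spirit (and you correctly flag the modular bookkeeping as the obstacle), but your argument for~(ii) has a real gap beyond what you acknowledge. In the Haagerup construction the canonical trace $\tau$ on the crossed product satisfies $\tau\circ\theta_s=e^{-s}\tau$, while every projection $e\in\mathcal{M}$ is fixed by the dual action $\theta$; hence $\tau(e)\in\{0,\infty\}$, and the inequality chain $a\,\tau(e)^{1/q}\le\Vert T(y)e\Vert_q\le\Vert T\Vert\,\Vert ye\Vert_p\le C\,\tau(e)^{1/p}$ is never meaningful for a nonzero $e\in\mathcal{M}$. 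This is not a technicality you can patch by ``approximating by $\mathcal{M}$-projections'': the spectral projections of $\vert T(y)\vert$ genuinely live in the crossed product, the module identity $T(ye)=T(y)e$ is only available for $e\in\mathcal{M}$, and there is no general approximation bridging the two. Your argument is essentially the semifinite-trace picture, and that picture does not transfer verbatim to Haagerup $L^p$. A correct direct proof of~(ii) needs a different mechanism (Junge--Sherman work via column-space representations of the hom-module); the paper sidesteps this by citing them.
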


\begin{proof}
(i)
By \cite[Theorem~2.5]{JS}, this map is a surjection.
We proceed to show that it is an isometry.
Let $\xi\in L^r(\mathcal{M})\setminus\{0\}$. 
We have already seen that $\Vert L_\xi\Vert\le \Vert\xi\Vert_r$.
We now establish the reverse inequality by considering three cases.

Assume that $p=\infty$.
Then $r=q$ and
\[
\Vert\xi\Vert_r=
\Vert L_\xi(1_\mathcal{M})\Vert_q\le
\Vert L_\xi\Vert\Vert 1_\mathcal{M}\Vert=
\Vert L_\xi\Vert,
\]
as required.

Now assume that $p<\infty$  and that $r=\infty$.
Then $p=q$, and, for each $x\in L^p(\mathcal{M})$, we have
\begin{equation*}
\begin{split}
\Vert L_\xi(x)\Vert_p & =
\Vert \xi x\Vert_p =
\bigl\Vert (\xi x)^*(\xi x)\bigr\Vert_{p/2}^{1/2}=
\bigl\Vert x^*\xi^*\xi x \bigr\Vert_{p/2}^{1/2} \\ 
& =
\bigl\Vert x^*\vert\xi\vert^2 x \bigr\Vert_{p/2}^{1/2}=
\bigl\Vert (\vert\xi\vert x)^*(\vert\xi\vert x)\bigr\Vert_{p/2}^{1/2}=
\bigl\Vert\vert\xi\vert x\bigr\Vert_p=
\bigl\Vert L_{\vert\xi\vert}(x) \bigr\Vert_p.
\end{split}
\end{equation*}
Thus $\Vert L_\xi\Vert=\Vert L_{\vert\xi\vert}\Vert$, and
\cite[Lemma~2.1]{JS} shows that $\Vert L_{\vert\xi\vert}\Vert=\Vert \vert\xi\vert\Vert=\Vert\xi\Vert$.

Finally, assume that $p,r<\infty$.
Then $\vert\xi\vert^{r/p}\in L^p(\mathcal{M})$, and we have
\begin{equation*}
\begin{split}
\bigl\Vert L_\xi\bigl(\vert\xi\vert^{r/p}\bigr)\bigr\Vert_q & =
\bigl\Vert\xi\vert\xi\vert^{r/p}\bigr\Vert_q=
\bigl\Vert\vert\xi\vert^{r/p}\xi^*\xi\vert\xi\vert^{r/p}\bigr\Vert_{q/2}^{1/2} \\ 
& =
\bigl\Vert\vert\xi\vert^{2(1+r/p)}\bigr\Vert_{q/2}^{1/2}=
\bigl\Vert\vert\xi\vert^{2r/q}\bigr\Vert_{q/2}^{1/2}=
\Vert\xi\Vert_r^{r/q}.
\end{split}
\end{equation*}
On the other hand, we have
\[
\bigl\Vert L_\xi\bigl(\vert\xi\vert^{r/p}\bigr)\bigr\Vert_q\le
\Vert L_\xi\Vert\bigl\Vert\vert\xi\vert^{r/p}\bigr\Vert_p=
\Vert L_\xi\Vert\Vert\xi\Vert_r^{r/p},
\]
and hence
\[
\Vert\xi\Vert_r=\Vert\xi\Vert_r^{r/q-r/p}\le\Vert L_\xi\Vert,
\]
as required.

(ii)
\cite[Corollary~2.7]{JS}.
\end{proof}

\begin{corollary}\label{c1549}
Let $\mathcal{M}$ be a von Neumann algebra, 
let $0< p,q\le\infty$, and
let $T\colon L^p(\mathcal{M})\to L^q(\mathcal{M})$ be a continuous linear map.
\begin{enumerate}
\item[(i)]
Suppose that
\[
e\in\mathcal{M}\text{ projection }
\Longrightarrow \
e^\perp Te=0.
\]
Then $T$ is a right $\mathcal{M}$-module homomorphism.
\item[(ii)]
Suppose that $1\le p,q\le\infty$.
Then 
\[
\Vert\ad (T)\Vert\le 8
\sup\bigl\{\Vert e^\perp Te\Vert : e\in\mathcal{M}\text{ projection}\bigr\}.
\]
\end{enumerate}
\end{corollary}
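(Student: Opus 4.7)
The plan is to recognize that Corollary~\ref{c1549} is essentially a direct specialization of Theorem~\ref{v1637} to the von Neumann algebra setting, so most of the work has already been done. I need to verify the three hypotheses of Theorem~\ref{v1637}: the coefficient algebra must be a unital $C^*$-algebra of real rank zero, and the source and target must be unital quasi-Banach (and in part (ii), Banach) right modules.

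First I would observe that $\mathcal{M}$ is automatically unital (every von Neumann algebra has a unit) and has real rank zero. The latter is a classical fact: by Borel functional calculus, every self-adjoint element $a\in\mathcal{M}$ is a norm limit of self-adjoint elements with finite spectrum (take the simple Borel functions $h_n$ from the proof of Theorem~\ref{t1946} applied to $a_+$ and $a_-$, and note that $h_n(a_\pm)\in\mathcal{M}$ lies in the norm closure of real-linear combinations of projections of $\mathcal{M}$ because $\mathcal{M}$ is weak*-closed and the range projections $\chi_{]k/(n+1),1]}(a_\pm)$ belong to $\mathcal{M}$). Hence the self-adjoint invertible elements of $\mathcal{M}$ are norm-dense in $\mathcal{M}_{\textup{sa}}$, which is the defining property of real rank zero.

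Next I would check that, for every $0<p\le\infty$, the space $L^p(\mathcal{M})$ is a unital quasi-Banach right $\mathcal{M}$-module: the module action $x\mapsto xa$ is contractive by the non-commutative H\"older inequality recalled just before the statement (take $r=p$, $q=\infty$), and $x\cdot 1_\mathcal{M}=x$ by definition of Haagerup's $L^p$-spaces, so the module is unital. In the range $1\le p\le\infty$ the module is actually Banach (it is even contractive, as stressed at the beginning of the subsection).

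Finally I would invoke Theorem~\ref{v1637} with $\mathcal{A}=\mathcal{M}$, $\mathcal{X}=L^p(\mathcal{M})$, and $\mathcal{Y}=L^q(\mathcal{M})$. Part (i) of the hypothesis of Theorem~\ref{v1637} is the hypothesis of our part (i), yielding that $T$ is a right $\mathcal{M}$-module homomorphism. For part (ii), the additional assumption $1\le p,q\le\infty$ makes $L^p(\mathcal{M})$ and $L^q(\mathcal{M})$ Banach $\mathcal{M}$-modules, so Theorem~\ref{v1637}(ii) gives the distance estimate $\Vert\ad(T)\Vert\le 8\sup\{\Vert e^\perp Te\Vert:e\in\mathcal{M}\text{ projection}\}$. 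There is no genuine obstacle here; the only point that might momentarily pause the reader is the real-rank-zero property of a general von Neumann algebra, which is why I would spell that verification out explicitly.
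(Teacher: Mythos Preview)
Your proposal is correct and follows exactly the paper's approach: the paper's proof is the single line ``This follows from Theorem~\ref{v1637},'' and you have simply spelled out the routine verifications (von Neumann algebras are unital and have real rank zero, and $L^p(\mathcal{M})$ is a unital quasi-Banach, respectively Banach, right $\mathcal{M}$-module) that the paper leaves implicit.
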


\begin{proof}
This follows from Theorem~\ref{v1637}.
\end{proof}

By Corollary~\ref{t44}, the space $\Hom_\mathcal{M}(L^p(\mathcal{M}),L^q(\mathcal{M}))$ is reflexive.
However we next show that this space is not merely reflexive.

\begin{corollary}
Let $\mathcal{M}$ be a von Neumann algebra,  let $0<p,q\le\infty$, and
let $T\colon L^p(\mathcal{M})\to L^q(\mathcal{M})$ be a linear map such that
\[
T(x)\in
\overline{\left\{\Phi(x) : \Phi\in\Hom_\mathcal{M}(L^p(\mathcal{M}),L^q(\mathcal{M}))\right\}}
\quad
(x\in L^p(\mathcal{M})).
\]
Then $T\in\Hom_\mathcal{M}(L^p(\mathcal{M}),L^q(\mathcal{M}))$.
In particular, the space
$\Hom_\mathcal{M}(L^p(\mathcal{M}),L^q(\mathcal{M}))$ is reflexive.
\end{corollary}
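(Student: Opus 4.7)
The plan is to reduce the statement to the two technical results already in hand: Theorem~\ref{cont1} (which provides automatic continuity from continuity of each ``corner'' $e^\perp T e$) and Corollary~\ref{c1549}(i) (which promotes a continuous map satisfying $e^\perp T e = 0$ for every projection $e\in\mathcal{M}$ to a right $\mathcal{M}$-module homomorphism). So the entire task is to verify that $e^\perp T e = 0$ for every projection $e\in\mathcal{M}$; continuity then comes for free from Theorem~\ref{cont1} (since the zero map is certainly continuous), and the module homomorphism property is then immediate from Corollary~\ref{c1549}(i).

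To verify $e^\perp T e = 0$, I would fix a projection $e\in\mathcal{M}$ and an arbitrary $x\in L^p(\mathcal{M})$. By the notational convention \eqref{eqm}, $(e^\perp T e)(x) = T(xe^\perp)e$, so I need to show $T(xe^\perp)e = 0$. The hypothesis, applied to the element $xe^\perp\in L^p(\mathcal{M})$, produces a sequence $(\Phi_n)$ in $\Hom_\mathcal{M}(L^p(\mathcal{M}),L^q(\mathcal{M}))$ with $\Phi_n(xe^\perp)\to T(xe^\perp)$ in $L^q(\mathcal{M})$. Since each $\Phi_n$ is a right $\mathcal{M}$-module homomorphism, $\Phi_n(xe^\perp) = \Phi_n(x)e^\perp$, and hence
\[
\Phi_n(xe^\perp)\,e = \Phi_n(x)\,e^\perp e = 0
\quad (n\in\mathbb{N}).
\]
Right multiplication by $e$ is continuous on $L^q(\mathcal{M})$ (by the non-commutative H\"older inequality, with norm at most $\Vert e\Vert\le 1$), so taking limits gives $T(xe^\perp)e = 0$, as required.

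I do not anticipate a serious obstacle here: everything hinges on the elementary observation that pointwise convergence of the $\Phi_n$ on the specific vector $xe^\perp$, combined with the module property $\Phi_n(xe^\perp)e=0$ and continuity of right multiplication, forces $(e^\perp T e)(x)=0$. Once this is established, the conclusion is just a direct concatenation of Theorem~\ref{cont1} and Corollary~\ref{c1549}(i), and the ``in particular'' assertion about reflexivity of $\Hom_\mathcal{M}(L^p(\mathcal{M}),L^q(\mathcal{M}))$ is then just the definition of reflexivity applied to the statement just proved.
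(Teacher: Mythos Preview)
Your proposal is correct and follows essentially the same approach as the paper: both arguments show $e^\perp Te=0$ for every projection $e$ by applying the hypothesis to $xe^\perp$, using the module property $\Phi_n(xe^\perp)e=\Phi_n(x)e^\perp e=0$, and passing to the limit, then invoke Theorem~\ref{cont1} for continuity and Corollary~\ref{c1549}(i) for the module homomorphism conclusion.
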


\begin{proof}
Take a projection $e\in\mathcal{M}$. 
For each $x\in L^p(\mathcal{M})$, there exists a sequence $(\Phi_n)$ in 
$\Hom_\mathcal{M}(L^p(\mathcal{M}),L^q(\mathcal{M}))$ such that
$T(xe^\perp)=\lim\Phi_n(xe^\perp)$ in norm, and hence
\[
(e^\perp Te)(x)=
T(xe^\perp)e=\lim_{n \to \infty}\Phi_n(xe^\perp)e=
\lim_{n \to \infty} \Phi_n(x)e^\perp e=0.
\]
This shows that $e^\perp Te=0$.

From Theorem~\ref{cont1}, it follows that $T$ is continuous, and  
Corollary~\ref{c1549} then gives
$T\in \Hom_\mathcal{M}(L^p(\mathcal{M}),L^q(\mathcal{M}))$.
\end{proof}

\section{Distance estimates}\label{s3}

\subsection{Homomorphisms between modules over a $C^*$-algebra}

Let $\mathcal{M}$ be a von Neumann algebra, and let $\mathcal{X}$ a Banach right $\mathcal{M}$-module.
Then the Banach left $\mathcal{M}$-module $\mathcal{X}^*$ is called normal if the map $a\mapsto a\phi$ 
from $\mathcal{M}$ to $\mathcal{X}^*$  is weak* continuous for each $\phi\in\mathcal{X}^*$.
Similarly, if $\mathcal{X}$ is a Banach left $\mathcal{M}$-module, 
then the Banach right $\mathcal{M}$-module $\mathcal{X}^*$ is called normal
if the map  $a\mapsto \phi a$ from $\mathcal{M}$ to $\mathcal{X}^*$ is weak* continuous for each $\phi\in\mathcal{X}^*$.

\begin{theorem}\label{at954}
Let $\mathcal{M}$ be an injective von Neumann algebra,
let $\mathcal{X}$ and $\mathcal{Y}$ be Banach right and left $\mathcal{M}$-modules, respectively, and
let $T\colon\mathcal{X}\to\mathcal{Y}^*$ be a continuous linear map.
\begin{enumerate}
\item[(i)] 
The subset $\mathcal{W}(T)$ of $\mathcal{X}$
consisting of the elements $x\in\mathcal{X}$
with the property that the bilinear map $(a,b)\mapsto T(xa)b$
from $\mathcal{M}\times\mathcal{M}$ to $\mathcal{Y}^*$ is separately weak* continuous 
is  a closed submodule of $\mathcal{X}$. Further,
if the modules $\mathcal{X}^*$ and $\mathcal{Y}^*$ are normal, 
then  $\mathcal{W}(T)=\mathcal{X}$.
\item[(ii)] 
There exists a continuous linear map $\Phi\colon\mathcal{X}\to\mathcal{Y}^*$ 
such that:
\begin{enumerate}
\item[(a)]
$\Vert\Phi\Vert\le \Vert T\Vert$;
\item[(b)]
$\Phi(xa)=\Phi(x)a$ for all $x\in\mathcal{W}(T)$ and $a\in\mathcal{M}$;
in particular, if both $\mathcal{X}^*$ and $\mathcal{Y}^*$ are normal, then $\Phi$ is a right $\mathcal{M}$-module homomorphism;
\item[(c)]
$\left\Vert 1_\mathcal{M}T-\Phi\right \Vert\le \Vert\ad (T)\Vert$;
in particular, if the module $\mathcal{X}$ is unital, then $\Vert T-\Phi\Vert\le \Vert\ad (T)\Vert$;
\item[(d)]
$\Vert T1_\mathcal{M}-\Phi\Vert\le \Vert\ad (T)\Vert$;
in particular, if the module $\mathcal{Y}$ is unital, then $\Vert T-\Phi\Vert\le \Vert\ad (T)\Vert$.
\end{enumerate}
\end{enumerate}	
\end{theorem}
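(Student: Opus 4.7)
The plan is to construct $\Phi$ by averaging $T$ against a left-invariant mean on the unitary group of $\mathcal{M}$. By Connes's theorem, injectivity of $\mathcal{M}$ is equivalent to amenability of $U(\mathcal{M})$ with the discrete topology, so such a mean exists.

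For (i), the verification that $\mathcal{W}(T)$ is a linear subspace and is stable under the right action of $\mathcal{M}$ is formal: stability uses that $a\mapsto ca$ is weak*-continuous on $\mathcal{M}$ for each $c\in\mathcal{M}$, so $a\mapsto T(x(ca))b$ inherits weak*-continuity from $a'\mapsto T(xa')b$. Norm closedness follows because a uniform-norm limit of bounded weak*-to-weak*-continuous linear maps between dual spaces is again weak*-to-weak*-continuous (the predual adjoints converge in norm). For $\mathcal{W}(T)=\mathcal{X}$ under the normality hypotheses, fix $y\in\mathcal{Y}$ and $b\in\mathcal{M}$ and rewrite
\[
\langle y,T(xa)b\rangle = \langle by,T(xa)\rangle = \langle x,a\cdot T^*(by)\rangle,
\]
which is weak*-continuous in $a$ by normality of $\mathcal{X}^*$; normality of $\mathcal{Y}^*$ handles the $b$-variable analogously.

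For (ii), fix a left-invariant mean $m$ on $\ell^\infty(U(\mathcal{M}))$ and define $\Phi\colon\mathcal{X}\to\mathcal{Y}^*$ by $\langle y,\Phi(x)\rangle = m_u(\langle y,T(xu)u^{-1}\rangle)$ for $y\in\mathcal{Y}$. Since the integrand is bounded by $\Vert y\Vert\Vert T\Vert\Vert x\Vert$ uniformly in $u$, this defines a continuous linear map with $\Vert\Phi\Vert\le\Vert T\Vert$, giving (a). Writing $1_\mathcal{M}=uu^{-1}$ yields
\[
(1_\mathcal{M} T)(x) - T(xu)u^{-1} = T\bigl((xu)u^{-1}\bigr) - T(xu)u^{-1} = \bigl(\ad T(u^{-1})\bigr)(xu),
\]
\[
(T 1_\mathcal{M})(x) - T(xu)u^{-1} = \bigl(T(x)u - T(xu)\bigr)u^{-1} = -\bigl(\ad T(u)\bigr)(x)u^{-1},
\]
each of norm at most $\Vert\ad T\Vert\Vert x\Vert$ uniformly in $u$; averaging against $m$ then yields (c) and (d). For (b), let $v\in U(\mathcal{M})$; the identity $\langle y,T(xvu)u^{-1}\rangle = \langle vy,T(xvu)(vu)^{-1}\rangle$ together with left invariance of $m$ under $u\mapsto vu$ gives
\[
\langle y,\Phi(xv)\rangle = m_w\bigl(\langle vy,T(xw)w^{-1}\rangle\bigr) = \langle vy,\Phi(x)\rangle = \langle y,\Phi(x)v\rangle,
\]
so $\Phi(xv)=\Phi(x)v$. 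Since $\mathcal{M}$ is the linear span of its unitaries and both sides are norm-continuous in $a$, the identity $\Phi(xa)=\Phi(x)a$ extends to all $a\in\mathcal{M}$ and every $x\in\mathcal{X}$, covering in particular $x\in\mathcal{W}(T)$.

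The principal technical hurdle is the duality bookkeeping in (b), especially confirming that the invariant mean produces a bona fide element of $\mathcal{Y}^*$ and that the change of variables $u\mapsto vu$ is implemented correctly via the invariance of $m$.
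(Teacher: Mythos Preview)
Your argument for (i), and for parts (a), (c), (d) of (ii), is essentially the same as the paper's and is fine: those steps use only that $m$ is a state on $\ell^\infty(U(\mathcal{M}))$, not any invariance.

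The gap is in (b). You invoke ``Connes's theorem'' to obtain a left-invariant mean on $\ell^\infty(U(\mathcal{M}))$, i.e.\ amenability of $U(\mathcal{M})$ as a \emph{discrete} group. Injectivity does not give this. For instance, $\mathcal{M}=B(\mathcal{H})$ with $\dim\mathcal{H}=\infty$ is injective, yet $U(B(\mathcal{H}))$ contains a discrete copy of the free group $F_2$ and hence is not amenable as a discrete group. What injectivity does yield (Haagerup, \cite[Theorem~2.1]{H}) is a mean $\mu$ on the isometry semigroup satisfying
\[
\int \tau(au^*,u)\,d\mu(u)=\int \tau(u^*,ua)\,d\mu(u)
\]
\emph{only for separately weak*-continuous} bilinear functionals $\tau$. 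This restricted invariance is exactly why the paper needs $x\in\mathcal{W}(T)$ in (b): the bilinear form $(u,v)\mapsto\langle y,T(xu)v\rangle$ must be normal before the invariance identity can be applied. Your proof, by contrast, derives $\Phi(xa)=\Phi(x)a$ for \emph{every} $x\in\mathcal{X}$ without ever touching $\mathcal{W}(T)$; that should have been a warning that the invariance you are using is stronger than what injectivity provides.

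To repair the argument, replace the discrete invariant mean by Haagerup's mean and restrict the change-of-variables step to $x\in\mathcal{W}(T)$, exactly as in the paper.
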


\begin{proof}
(i) Routine verifications show that  $\mathcal{W}(T)$ is a submodule of $\mathcal{X}$.
To show that $\mathcal{W}(T)$ is closed, 
take a sequence $(x_n)$ in $\mathcal{W}(T)$ and $x\in\mathcal{X}$ such that $(x_n)\to x$ in norm. 
We define continuous bilinear maps
$\tau, \tau_n\colon \mathcal{M}\times\mathcal{M}\to\mathcal{Y}^*$ by
\[
\tau(a,b)=T(xa)b, \
\tau_n(a,b)=T(x_na)b \quad(a,b\in\mathcal{M}, \ n\in\mathbb{N}).
\]	
Then  $(\tau_n)\to\tau$ in norm, and each $\tau_n$ is separately weak* continuous. 
This implies that $\tau$ is separately weak* continuous, which shows that $x\in\mathcal{W}(T)$. 

Suppose that both $\mathcal{X}^*$ and $\mathcal{Y}^*$ are normal, and take $x\in\mathcal{X}$.
For each $a\in\mathcal{M}$, $T(xa)\in\mathcal{Y}^*$, so that, by definition, 
the map $b\mapsto T(xa)b$ from $\mathcal{M}$ to $\mathcal{Y}^*$ is weak*
continuous. For each $b\in\mathcal{M}$ and each $y\in\mathcal{Y}$, define
$\phi_{b,y}\in\mathcal{X}^*$ by
\[
\langle x,\phi_{b,y}\rangle=
\langle y,T(x)b\rangle
\quad
(x\in\mathcal{X}).
\]
Then
\[
\langle y, T(xa)b\rangle=
\langle x,a\phi_{b,y}\rangle
\quad 
(x\in\mathcal{X},\, a\in\mathcal{M}),
\]
and, since the map $a\mapsto a\phi_{b,y}$ is weak* continuous, it follows that the functional $a\mapsto\langle y, T(xa)b\rangle$ is weak* continuous for each
$x\in\mathcal{X}$. 

(ii)	
Let $G$ be  the discrete semigroup of the isometries of $\mathcal{M}$.
A mean on $G$ is a state $\mu$ on $\ell^\infty(G)$ and, for a given mean $\mu$,
we use the formal notation
\[
\int_G\phi(u) \, d\mu(u) :=\langle \phi,\mu\rangle
\quad
(\phi\in\ell^\infty(G)).
\]
By \cite[Theorem~2.1]{H}, there exists a mean $\mu$ on $G$ with the property that
\begin{equation}\label{e1038}
\int_G \tau(au^*,u)\, d\mu(u)=\int_G\tau(u^*,ua)\, d\mu(u)
\end{equation}
for each separately weak* continuous bilinear functional $\tau\colon\mathcal{M}\times\mathcal{M}\to\mathbb{C}$
and each $a\in\mathcal{M}$.

Define $\Phi\colon\mathcal{X}\to\mathcal{Y}^*$ by
\[
\langle y, \Phi(x)\rangle=\int_G \langle y,T(xu^*)u\rangle\, d\mu(u)
\quad 
(x\in\mathcal{X}, \, y\in\mathcal{Y}).
\]
Then $\Phi$ is well-defined and linear. 
Further, for each $x\in\mathcal{X}$, $y\in\mathcal{Y}$, and $u\in G$, we have
\begin{equation*}
\begin{split}
\vert \langle y,T(xu^*)u\rangle\vert & \le
\Vert y\Vert\Vert T(xu^*)u\Vert  \le \Vert y\Vert \Vert T(xu^*)\Vert \\ 
& \le \Vert y\Vert \Vert T\Vert\Vert xu^*\Vert  \le \Vert y\Vert \Vert T\Vert \Vert x\Vert,
\end{split}
\end{equation*}
which implies that
\[
\left\vert\int_G \langle y,T(xu^*)u\rangle\, d\mu(u)\right\vert\le
\Vert T\Vert \Vert x\Vert\Vert y\Vert
\]
and hence that
$\Vert\Phi(x)\Vert\le\Vert T\Vert \Vert x\Vert$.
Thus $\Phi$ is continuous and (a) holds.

Let $x\in\mathcal{W}(T)$, $a\in\mathcal{M}$, and $y\in\mathcal{Y}$. 
Since, by definition, the bilinear functional
$(u,v)\mapsto \langle y,T(xu)v\rangle$ is separately weak* continuous,
it follows from \eqref{e1038} that
\begin{equation*}
\begin{split}
\langle y,\Phi(xa)\rangle
&=
\int_G\langle y,T(xau^*)u\rangle\, d\mu(u)
=
\int_G\langle y,T(xu^*)ua\rangle\, d\mu(u)\\
&=
\int_G\langle ay,T(xu^*)u\rangle\, d\mu(u)
=
\langle ay,\Phi(x)\rangle =\langle y,\Phi(x)a\rangle.
\end{split}
\end{equation*}
This establishes (b).

Now let $x\in\mathcal{X}$ and $y\in\mathcal{Y}$ with 
$\Vert x\Vert=\Vert y\Vert=1$. Then
\begin{align*}
\vert\langle y,T(x1_\mathcal{M})-\Phi(x)\rangle\vert
& =
\left\vert
\int_G\langle y,T(x1_\mathcal{M})-T(xu^*)u\rangle \, d\mu(u)
\right\vert
\\ 
& = \left\vert
\int_G\langle y,T(xu^*u)-T(xu^*)u\rangle \, d\mu(u)
\right\vert \\
& \le
\int_G \bigl\vert \langle y,T(xu^*u)-T(xu^*)u)\rangle \bigr\vert \, d\mu(u) \\ 
& \le \int_G \left\Vert T(xu^*u)-T(xu^*)u \right\Vert \, d\mu(u)
\\
& \le
\int_G \Vert \ad (T)\Vert\Vert xu^*\Vert\Vert u\Vert \, d\mu(u)\\ 
& \le
\int_G\Vert\ad (T)\Vert\Vert x\Vert\Vert u^*\Vert\Vert u\Vert \, d\mu(u)
= \Vert\ad (T)\Vert.
\end{align*}
This gives (c).
We also have
\begin{align*}
\vert\langle y,T(x)1_\mathcal{M}-\Phi(x)\rangle\vert
&=
\left\vert
\int_G\langle y,T(x)1_\mathcal{M}-T(xu^*)u\rangle \, d\mu(u)
\right\vert
\\
&=
\left\vert
\int_G\langle y,T(x)u^*u-T(xu^*)u\rangle \, d\mu(u)
\right\vert
\\
&\le
\int_G \bigl\vert \langle y,T(x)u^*u-T(xu^*)u\rangle \bigr\vert \, d\mu(u)
\\
&\le
\int_G\Vert T(x)u^*u-T(xu^*)u\Vert \, d\mu(u)
\\
&\le
\int_G \Vert T(x)u^*-T(xu^*)\Vert \, d\mu(u)
\\
&\le
\int_G \Vert\ad (T)\Vert\Vert x\Vert\Vert u^*\Vert \, d\mu(u)
=
\Vert\ad (T)\Vert,
\end{align*}
and this gives (d).
\end{proof}

\begin{theorem}\label{at1140}
Let $\mathcal{A}$ be a nuclear $C^*$-algebra,
let $\mathcal{X}$ and $\mathcal{Y}$ be  Banach right and left $\mathcal{A}$-modules, respectively, and
let $T\colon\mathcal{X}\to\mathcal{Y}^*$ be a continuous linear map.
Then
there exists a continuous right $\mathcal{A}$-module homomorphism $\Phi\colon\mathcal{X}\to\mathcal{Y}^*$ 
such that:
\begin{enumerate}
\item[(a)]
$\Vert\Phi\Vert\le \Vert T\Vert$;
\item[(b)] 
$\Vert aT-a\Phi\Vert
\le 
\Vert\ad (T)\Vert\Vert a\Vert$ $(a\in\mathcal{A})$; moreover,
if the module $\mathcal{X}$ is essential, then $\Vert T-\Phi\Vert\le \Vert\ad (T)\Vert$;
\item[(c)]
$\Vert Ta-\Phi a\Vert
\le \Vert\ad (T)\Vert\Vert a\Vert$ $(a\in\mathcal{A})$; moreover,
if the module $\mathcal{Y}$ is essential, then
$\Vert T-\Phi\Vert\le \Vert\ad (T)\Vert$.
\end{enumerate}	
\end{theorem}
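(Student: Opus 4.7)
Since $\mathcal{A}$ is nuclear, the enveloping von Neumann algebra $\mathcal{M}:=\mathcal{A}^{**}$ is injective (Choi--Effros/Connes), so Theorem~\ref{at954} applies with $\mathcal{M}$ as the injective algebra. My plan is to apply it to the second adjoint of $T$ and restrict back. Equip $\mathcal{X}^{**}$ and $\mathcal{Y}^{**}$ with the Arens right and left $\mathcal{A}^{**}$-module structures that are weak*-continuous in the second variable; these extend the given $\mathcal{A}$-module structures via the canonical isometric embeddings, and with this choice the duals $\mathcal{X}^{***}$ and $\mathcal{Y}^{***}$ are normal $\mathcal{A}^{**}$-modules in the sense of Section~3. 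The second adjoint $T^{**}\colon\mathcal{X}^{**}\to\mathcal{Y}^{***}$ satisfies $\Vert T^{**}\Vert=\Vert T\Vert$, and the restriction map $r\colon\mathcal{Y}^{***}\to\mathcal{Y}^*$ adjoint to $\mathcal{Y}\hookrightarrow\mathcal{Y}^{**}$ is a contractive right $\mathcal{A}$-module map with $r\circ T^{**}|_{\mathcal{X}}=T$. By Theorem~\ref{at954}(i), normality forces $\mathcal{W}(T^{**})=\mathcal{X}^{**}$, so Theorem~\ref{at954}(ii) delivers a right $\mathcal{A}^{**}$-module homomorphism $\Psi\colon\mathcal{X}^{**}\to\mathcal{Y}^{***}$ with $\Vert\Psi\Vert\le\Vert T\Vert$ and
\begin{equation*}
\Vert 1_{\mathcal{A}^{**}}T^{**}-\Psi\Vert\le\Vert\ad T^{**}\Vert,\qquad
\Vert T^{**}1_{\mathcal{A}^{**}}-\Psi\Vert\le\Vert\ad T^{**}\Vert;
\end{equation*}
a routine density argument based on the weak*-density of $\mathcal{A}$ in $\mathcal{A}^{**}$ and the separate weak*-continuity of the Arens actions gives $\Vert\ad T^{**}\Vert=\Vert\ad T\Vert$.

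Define $\Phi:=r\circ\Psi|_{\mathcal{X}}\colon\mathcal{X}\to\mathcal{Y}^*$; since $\Psi$ is a right $\mathcal{A}^{**}$-module map and $r$ intertwines the $\mathcal{A}$-action, $\Phi$ is a continuous right $\mathcal{A}$-module homomorphism with $\Vert\Phi\Vert\le\Vert T\Vert$, proving (a). For (b) and $a\in\mathcal{A}$, the identity $a\cdot 1_{\mathcal{A}^{**}}=a$ in $\mathcal{A}^{**}$ yields $aT^{**}=a(1_{\mathcal{A}^{**}}T^{**})$, hence $aT^{**}-a\Psi=a(1_{\mathcal{A}^{**}}T^{**}-\Psi)$ and
\begin{equation*}
\Vert aT-a\Phi\Vert\le\Vert aT^{**}-a\Psi\Vert\le\Vert a\Vert\cdot\Vert 1_{\mathcal{A}^{**}}T^{**}-\Psi\Vert\le\Vert a\Vert\cdot\Vert\ad T\Vert.
\end{equation*}
When $\mathcal{X}$ is essential, $\xi\cdot 1_{\mathcal{A}^{**}}=\xi$ for all $\xi\in\mathcal{X}^{**}$, so $1_{\mathcal{A}^{**}}T^{**}=T^{**}$ and restricting the bound $\Vert T^{**}-\Psi\Vert\le\Vert\ad T\Vert$ through $r$ gives $\Vert T-\Phi\Vert\le\Vert\ad T\Vert$. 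Property (c) is symmetric, via the identities $T^{**}a=(T^{**}1_{\mathcal{A}^{**}})a$ and $\Psi a=(\Psi 1_{\mathcal{A}^{**}})a$ which give $T^{**}a-\Psi a=(T^{**}1_{\mathcal{A}^{**}}-\Psi)a$; the essential-$\mathcal{Y}$ case uses $\eta\cdot 1_{\mathcal{A}^{**}}=\eta$ for $\eta\in\mathcal{Y}^{***}$, which holds because $\mathcal{Y}^{**}$ is then a unital left $\mathcal{A}^{**}$-module.

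The delicate point is the Arens-product bookkeeping of the first paragraph: choosing the correct Arens structures so that both $\mathcal{X}^{***}$ and $\mathcal{Y}^{***}$ are genuinely normal over $\mathcal{A}^{**}$, verifying that $r$ intertwines the $\mathcal{A}$-action, and establishing $\Vert\ad T^{**}\Vert=\Vert\ad T\Vert$ as an equality rather than merely an inequality. Once this framework is in place, Theorem~\ref{at954} does the substantive work.
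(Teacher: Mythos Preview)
Your approach has a genuine gap: the claim that $\mathcal{X}^{***}$ and $\mathcal{Y}^{***}$ can be made \emph{normal} $\mathcal{A}^{**}$-modules by choosing the right Arens extension is false in general. Normality of the left $\mathcal{A}^{**}$-module $(\mathcal{X}^{**})^*$ requires, for every $\xi\in\mathcal{X}^{**}$ and every $\Phi\in\mathcal{X}^{***}$, that $A\mapsto\langle\xi A,\Phi\rangle$ be weak*-continuous on $\mathcal{A}^{**}$. The Arens extension you want (the one making $A\mapsto\xi A$ weak*-to-weak* continuous) only gives this when $\Phi$ lies in the canonical image of $\mathcal{X}^*$; for a general $\Phi\in\mathcal{X}^{***}$ the map $A\mapsto\Phi(\xi A)$ need not be weak*-continuous. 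A concrete obstruction: take $\mathcal{A}=\mathcal{Y}=c_0$, so $\mathcal{A}^{**}=\mathcal{Y}^{**}=\ell^\infty$ with pointwise multiplication; for a Banach limit $\Phi\in(\ell^\infty)^*$ and $\eta=1$, one has $\langle\eta,\Phi A\rangle=\langle A,\Phi\rangle$, which is not weak*-continuous since $\Phi\notin\ell^1$. The same issue prevents you from showing $\mathcal{X}\subseteq\mathcal{W}(T^{**})$: for $x\in\mathcal{X}$ and $A\in\mathcal{A}^{**}\setminus\mathcal{A}$, the element $T^{**}(xA)$ lies in $\mathcal{Y}^{***}$ but typically not in $\iota(\mathcal{Y}^*)$, so $B\mapsto\langle B\eta,T^{**}(xA)\rangle$ is not weak*-continuous, and Haagerup's invariant-mean identity (which Theorem~\ref{at954} relies on) does not apply.

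The paper avoids the passage to biduals entirely. It uses Haagerup's theorem that a nuclear $C^*$-algebra has a virtual diagonal $\mathrm{M}\in(\mathcal{A}\widehat{\otimes}\mathcal{A})^{**}$ of norm one, and defines $\Phi$ directly by the averaging formula
\[
\langle y,\Phi(x)\rangle=\int_{\mathcal{A}\times\mathcal{A}}\langle y,T(xu)v\rangle\,d\mathrm{M}(u,v).
\]
The diagonal identity $a\mathrm{M}=\mathrm{M}a$ immediately gives the module-homomorphism property, and the identity $\pi^{**}(\mathrm{M})a=a$ yields the estimates in (b) and (c); no Arens bookkeeping or normality hypothesis is needed. (As a minor aside, even if your reduction worked, the claim ``$\xi\cdot 1_{\mathcal{A}^{**}}=\xi$ for all $\xi\in\mathcal{X}^{**}$'' when $\mathcal{X}$ is essential is too strong; only $x\cdot 1_{\mathcal{A}^{**}}=x$ for $x\in\mathcal{X}$ holds, though that weaker statement would suffice for your purposes.)
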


\begin{proof}
Consider the projective tensor product $\mathcal{A}\widehat{\otimes}\mathcal{A}$, and
let $\pi\colon\mathcal{A}\widehat{\otimes}\mathcal{A}\to\mathcal{A}$ be the continuous
linear map defined through
\[
\pi(a\otimes b)=ab
\quad
(a,b\in\mathcal{A}).
\]
The Banach space $\mathcal{A}\widehat{\otimes}\mathcal{A}$ is a contractive Banach 
$\mathcal{A}$-bimodule with respect to the operations defined through
\[
(a\otimes b)c=a\otimes bc, \ c(a\otimes b)=ca\otimes b \quad
(a,b,c\in\mathcal{A}).
\]
By \cite[Theorem~3.1]{H}, there exists a virtual diagonal for $\mathcal{A}$ of norm
one. This is an element
$\text{M}\in(\mathcal{A}\widehat{\otimes}\mathcal{A})^{**}$ 
with $\Vert\text{M}\Vert=1$ such that,
for each $a\in\mathcal{A}$, we have
\begin{equation*}
a\text{M}=\text{M}a \quad \text{and} \quad \pi^{**}(\text{M})a=a.
\end{equation*}
Here, both $(\mathcal{A}\widehat{\otimes}\mathcal{A})^{**}$ and $\mathcal{A}^{**}$ 
are considered as dual $\mathcal{A}$-bimodules in the usual way.
For each continuous bilinear functional $\tau\colon\mathcal{A}\times\mathcal{A}\to\mathbb{C}$
there exists a unique element $\widehat{\tau}\in(\mathcal{A}\widehat{\otimes}\mathcal{A})^{*}$
such that
\[
\widehat{\tau}(a\otimes b)=\tau(a,b)\quad (a,b\in\mathcal{A}),
\]
and we use the formal notation
\[
\int_{\mathcal{A}\times\mathcal{A}}\tau(u,v) \, d\text{M}(u,v) := \langle \widehat{\tau},\text{M}\rangle.
\]
Using this notation, the defining properties of $\text{M}$ can be written as
\begin{equation}\label{e1339}
\int_{\mathcal{A}\times\mathcal{A}}\tau(au,v)\, d\text{M}(u,v)=
\int_{\mathcal{A}\times\mathcal{A}}\tau(u,va)\, d\text{M}(u,v)
\end{equation}
and
\begin{equation}\label{e1340}
\int_{\mathcal{A}\times\mathcal{A}}\langle auv,\phi\rangle\, d\text{M}(u,v)=\langle a,\phi\rangle
\end{equation}
for each continuous bilinear functional $\tau\colon\mathcal{A}\times\mathcal{A}\to\mathbb{C}$,
each $a\in\mathcal{A}$, and each $\phi\in\mathcal{A}^*$;
further, it will be helpful noting that 
\begin{equation}\label{e1341}
\left\vert
\int_{\mathcal{A}\times\mathcal{A}}\tau(u,v)\, d\text{M}(u,v)
\right\vert
\le\Vert\text{M}\Vert\Vert\widehat{\tau}\Vert=\Vert\tau\Vert.
\end{equation}

Define $\Phi\colon\mathcal{X}\to\mathcal{Y}^*$ by
\[
\langle y,\Phi(x)\rangle=
\int_{\mathcal{A}\times\mathcal{A}}\langle y,T(xu)v\rangle\, d\text{M}(u,v)
\quad
(x\in\mathcal{X}, \, y\in\mathcal{Y}).
\]
Then $\Phi$ is well-defined and linear.
For each $x\in\mathcal{X}$, $y\in\mathcal{Y}$, and $u,v\in\mathcal{A}$, we have
\begin{equation*}
\begin{split}
\vert\langle y,T(xu)v\rangle\vert & \le
\Vert T(xu)v\Vert\Vert y\Vert\le
\Vert T(xu)\Vert\Vert v\Vert\Vert y\Vert \\
& \le \Vert T\Vert\Vert xu\Vert\Vert v\Vert\Vert y\Vert\le
\Vert T\Vert \Vert x\Vert\Vert u\Vert\Vert v\Vert\Vert y\Vert.
\end{split}
\end{equation*}
Then, using \eqref{e1341}, we have
\[
\left\vert
\int_{\mathcal{A}\times\mathcal{A}}\langle y,T(xu)v\rangle\, d\text{M}(u,v)
\right\vert\le
\Vert T\Vert\Vert x\Vert\Vert y\Vert,
\]
which implies that $\Vert\Phi(x)\Vert\le \Vert T\Vert\Vert x\Vert$.
Thus $\Phi$ is continuous and (a) holds.

We claim that $\Phi$ is a right $\mathcal{A}$-module homomorphism. Indeed,
for $x\in\mathcal{X}$, $a\in\mathcal{A}$, and each $y\in\mathcal{Y}$,  \eqref{e1339} gives
\begin{equation*}
\begin{split}
\langle y,\Phi(xa)\rangle
&=
\int_{\mathcal{A}\times\mathcal{A}}\langle y,T(xau)v\rangle \, d\text{M}(u,v)
=
\int_{\mathcal{A}\times\mathcal{A}}\langle y,T(xu)va\rangle \, d\text{M}(u,v)\\
&=
\int_{\mathcal{A}\times\mathcal{A}}\langle ay,T(xu)v\rangle\, d\text{M}(u,v)
 =
\langle ay,\Phi(x)\rangle =
\langle y,\Phi(x)a\rangle.
\end{split}
\end{equation*}
  
Our next objective is to prove (b).	
Take
$x\in\mathcal{X}$, $a\in\mathcal{A}$, and $y\in\mathcal{Y}$
with $\Vert x\Vert=\Vert a\Vert=\Vert y\Vert=1$, and define
$\phi\in\mathcal{A}^*$ and $\tau\colon\mathcal{A}\times\mathcal{A}\to\mathbb{C}$ by
\begin{gather*}
\langle u,\phi\rangle=
\langle y,T(xu)\rangle
\quad
(u\in\mathcal{A}), \\
\tau(u,v)=\left\langle y, T(xauv)-T(xau)v \right\rangle
\quad
(u,v\in\mathcal{A}).
\end{gather*}
For each $u,v\in\mathcal{A}$, we have
\begin{equation*}
\begin{split}
\vert \tau(u,v)\vert & \le
\Vert T(xauv)-T(xau)v\Vert\le
\Vert\ad (T)\Vert\Vert xau\Vert \Vert v\Vert\\ 
& \le 
\Vert\ad (T)\Vert\Vert x\Vert\Vert au\Vert\Vert v\Vert\le
\Vert\ad (T)\Vert\Vert u\Vert\Vert v\Vert,
\end{split}
\end{equation*}
so that $\Vert \tau\Vert\le \Vert\ad (T)\Vert$.
By \eqref{e1340},
\[
\langle y,T(xa)\rangle=
\langle a,\phi\rangle=
\int_{\mathcal{A}\times\mathcal{A}}\langle auv,\phi\rangle \, d\text{M}(u,v)=
\int_{\mathcal{A}\times\mathcal{A}}\langle y,T(xauv)\rangle \, d\text{M}(u,v),
\] 
and, using the definition of $\Phi$, we obtain
\[
\langle y, T(xa)-\Phi(xa)\rangle=
\int_{\mathcal{A}\times\mathcal{A}}\tau(u,v) \, d\text{M}(u,v).
\]
By \eqref{e1341},
$\vert\langle y, T(xa)-\Phi(xa)\rangle\vert\le
\Vert\ad (T)\Vert$.
Since this inequality holds for each $y\in\mathcal{Y}$ with $\Vert y\Vert=1$,
it follows that 
\[
\Vert T(xa)-\Phi(xa)\Vert\le \Vert\ad (T)\Vert.
\]
Now assume that $\mathcal{X}$ is essential.
Take an approximate identity $(e_j)_{j\in J}$ for $\mathcal{A}$ of bound $1$.
Then $(e_j)_{j\in J}$ is a right approximate identity for $\mathcal{X}$ and,
for each $x\in\mathcal{X}$ with $\Vert x\Vert=1$,
\[
\left\Vert T(xe_j)-\Phi(xe_j)\right\Vert\le \Vert\ad (T)\Vert
\quad
(j\in J),
\]
so that, using the continuity of $T$ and $\Phi$, we see that
$\Vert T(x)-\Phi(x)\Vert\le \Vert\ad (T)\Vert$. Thus
$\Vert T-\Phi\Vert \le \Vert\ad (T)\Vert$.

Finally, we proceed to prove (c).  
Take $x\in\mathcal{X}$, $a\in\mathcal{A}$, and $y\in\mathcal{Y}$
with $\Vert x\Vert=\Vert a\Vert=\Vert y\Vert=1$, and define
$\phi\in\mathcal{A}^*$ and $\tau\colon\mathcal{A}\times\mathcal{A}\to\mathbb{C}$ by
\begin{gather*}
\langle u,\phi\rangle=
\langle y,T(x)u\rangle
\quad
(u\in\mathcal{A}),
\\
\tau(u,v)=\langle y, T(x)auv-T(xau)v\rangle
\quad
(u,v\in\mathcal{A}).
\end{gather*}
For each $u,v\in\mathcal{A}$, we have
\begin{equation*}
\begin{split}
\vert \tau(u,v)\rangle\vert & \le
\Vert T(x)auv-T(xau)v\Vert\le
\Vert T(x)au-T(xau)\Vert\Vert v\Vert \\
& \le
\Vert\ad (T)\Vert\Vert x\Vert\Vert au\Vert\Vert v\Vert\le
\Vert\ad (T)\Vert\Vert u\Vert\Vert v\Vert,
\end{split}
\end{equation*}
so that $\Vert \tau\Vert\le \Vert\ad (T)\Vert$.
By \eqref{e1340},
\[
\langle y,T(x)a\rangle=
\langle a,\phi\rangle=
\int_{\mathcal{A}\times\mathcal{A}}\langle auv,\phi\rangle\, d\text{M}(u,v)=
\int_{\mathcal{A}\times\mathcal{A}}\langle y,T(x)auv\rangle\, d\text{M}(u,v),
\] 
and, using the definition of $\Phi$, we obtain
\[
\langle y, T(x)a-\Phi(x)a\rangle=
\langle y, T(x)a-\Phi(xa)\rangle=
\int_{\mathcal{A}\times\mathcal{A}}\tau(u,v) \, d\text{M}(u,v).
\]
From \eqref{e1341} we see that
$\vert\langle y, T(x)a-\Phi(x)a\rangle\vert\le
\Vert\ad (T)\Vert$.
Thus
\[
\Vert T(x)a-\Phi(x)a\Vert\le \Vert\ad (T)\Vert.
\]
Assume that $\mathcal{Y}$ is essential, and
take an approximate identity $(e_j)_{j\in J}$ for $\mathcal{A}$ of bound $1$.
For each $x\in\mathcal{X}$ and $y\in\mathcal{Y}$ with 
$\Vert x\Vert=\Vert y\Vert=1$, we have
\[
\vert\langle e_jy,T(x)-\Phi(x)\rangle\vert=
\vert\langle y, T(x)e_j-\Phi(x)e_j\rangle\vert\le \Vert\ad (T)\Vert
\quad
(j\in J).
\]
Since $\mathcal{Y}$ is essential, it follows that
$(e_j)_{j\in J}$ is a right approximate identity for $\mathcal{Y}$ and
hence, taking limit, we see that
$\vert\langle y,T(x)-\Phi(x)\rangle\vert\le\Vert\ad (T)\Vert$.
Therefore
$\Vert T(x)-\Phi(x)\Vert\le \Vert\ad (T)\Vert$,
and the proof is complete.
\end{proof}

\begin{corollary}\label{s1816}
Let $\mathcal{M}$ be an injective von Neumann algebra, and
let $\mathcal{X}$ and $\mathcal{Y}$ be unital Banach right and left $\mathcal{M}$-modules, respectively, 
with both  $\mathcal{X}^*$ and  $\mathcal{Y}^*$ normal.
Then
\begin{equation*}
\dist 
\bigl(T,\Hom_\mathcal{M}(\mathcal{X},\mathcal{Y}^*)\bigr)
\le 8\sup\bigl\{\Vert e^\perp Te\Vert : e\in\mathcal{M}\text{ projection}\bigr\}
\end{equation*}
for each $T\in B(\mathcal{X},\mathcal{Y}^*)$. In particular,
the space $\Hom_\mathcal{A}(\mathcal{X},\mathcal{Y}^*)$ is hyperreflexive.
\end{corollary}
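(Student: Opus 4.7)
The proof should be essentially a direct combination of Theorem~\ref{at954} and Theorem~\ref{v1637}, packaged with Proposition~\ref{p10}; the heavy lifting is already done in those results, so the corollary reduces to verifying hypotheses and assembling the pieces. First I would fix $T\in B(\mathcal{X},\mathcal{Y}^*)$ and set
\[
\varepsilon:=\sup\bigl\{\Vert e^\perp Te\Vert : e\in\mathcal{M}\text{ projection}\bigr\}.
\]
Since both $\mathcal{X}^*$ and $\mathcal{Y}^*$ are normal, Theorem~\ref{at954}(i) yields $\mathcal{W}(T)=\mathcal{X}$, so Theorem~\ref{at954}(ii) produces a continuous right $\mathcal{M}$-module homomorphism $\Phi\colon\mathcal{X}\to\mathcal{Y}^*$ which, because $\mathcal{X}$ is unital (equivalently, $\mathcal{Y}$ is unital and so $\mathcal{Y}^*$ is unital as a right module), satisfies $\Vert T-\Phi\Vert\le\Vert\ad(T)\Vert$ by clause~(c) (or~(d)).

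Next I would invoke the standard fact that every von Neumann algebra is unital and of real rank zero, since any self-adjoint element is a norm limit of finite real linear combinations of its spectral projections, which lie in the algebra. Consequently Theorem~\ref{v1637}(ii) applies to $T$ viewed as a continuous linear map between the unital Banach right $\mathcal{M}$-modules $\mathcal{X}$ and $\mathcal{Y}^*$, and gives $\Vert\ad(T)\Vert\le 8\varepsilon$. Combining the two estimates,
\[
\dist\bigl(T,\Hom_\mathcal{M}(\mathcal{X},\mathcal{Y}^*)\bigr)\le\Vert T-\Phi\Vert\le\Vert\ad(T)\Vert\le 8\varepsilon,
\]
which is the claimed distance estimate.

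Finally, hyperreflexivity in the sense of Larson follows at once from Proposition~\ref{p10}: parts~(i) and~(iii) yield $\varepsilon=\alpha(T)=\beta(T)\le\gamma(T)\le\delta(T)$, where $\delta(T)$ is exactly the quantity appearing in the definition of the hyperreflexivity constant. Thus $\dist(T,\Hom_\mathcal{M}(\mathcal{X},\mathcal{Y}^*))\le 8\delta(T)$. There is no real obstacle here; the only bookkeeping to keep straight is the module structure on $\mathcal{Y}^*$, namely that its right $\mathcal{M}$-module action inherited from the left action on $\mathcal{Y}$ is unital precisely because $\mathcal{Y}$ is, so that Theorem~\ref{at954}(ii)(c)--(d) and Theorem~\ref{v1637}(ii) can be applied without modification.
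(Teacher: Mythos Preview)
Your proof is correct and follows essentially the same route as the paper's own proof: apply Theorem~\ref{at954} (using normality of $\mathcal{X}^*$ and $\mathcal{Y}^*$ so that $\mathcal{W}(T)=\mathcal{X}$, and unitality of $\mathcal{X}$ or $\mathcal{Y}$ for the bound $\Vert T-\Phi\Vert\le\Vert\ad(T)\Vert$), then feed the result into Theorem~\ref{v1637}(ii), and finish the hyperreflexivity claim via Proposition~\ref{p10}. Your added remarks about why von Neumann algebras have real rank zero and why $\mathcal{Y}^*$ inherits a unital right module structure are helpful bookkeeping details that the paper leaves implicit.
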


\begin{proof}
Take  $T\in B(\mathcal{X},\mathcal{Y}^*)$.
Then Theorem~\ref{at954} gives $\Phi\in\text{Hom}_\mathcal{M}(\mathcal{X},\mathcal{Y}^*)$
such that $\Vert\Phi\Vert\le \Vert T\Vert$ and $\Vert T-\Phi\Vert\le\Vert\ad (T)\Vert$.
Theorem~\ref{v1637}(ii) now shows that
\[
\Vert T-\Phi\Vert\le
8\sup\bigl\{\Vert e^\perp Te\Vert : e\in\mathcal{M}\text{ projection}\bigr\},
\]
which establishes our estimate of the distance to $\text{Hom}_\mathcal{M}(\mathcal{X},\mathcal{Y}^*)$.

The hyperreflexivity follows from the estimates in Proposition~\ref{p10}.
\end{proof}

\begin{corollary}
Let $\mathcal{A}$ be a nuclear $C^*$-algebra, and
let $\mathcal{X}$ and $\mathcal{Y}$ be essential Banach right and left $\mathcal{A}$-modules, respectively.
Then
\begin{equation*}
\dist 
\bigl(T,\Hom_\mathcal{A}(\mathcal{X},\mathcal{Y}^*)\bigr)
\le 8\sup\bigl\{\Vert aTb\Vert: a,b\in\mathcal{A}_+\text{ contractions, } ab=0\bigr\}
\end{equation*}
for each $T\in B(\mathcal{X},\mathcal{Y}^*)$. In particular,
the space $\Hom_\mathcal{A}(\mathcal{X},\mathcal{Y}^*)$ is hyperreflexive.
\end{corollary}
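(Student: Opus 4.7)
The plan is to combine the nuclear-algebra distance estimate from Theorem~\ref{at1140} with the orthogonality-based bound on $\Vert\ad(T)\Vert$ from Theorem~\ref{v1636}(ii). The argument is short because the heavy lifting has already been done in those two theorems; what needs checking is that their hypotheses are met in the present setting and that the constants compose correctly.

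First I would apply Theorem~\ref{at1140} to the given $T\in B(\mathcal{X},\mathcal{Y}^*)$, invoking part (b) of that theorem together with the assumption that $\mathcal{X}$ is essential. This produces a continuous right $\mathcal{A}$-module homomorphism $\Phi\colon\mathcal{X}\to\mathcal{Y}^*$ satisfying
\[
\Vert T-\Phi\Vert\le\Vert\ad(T)\Vert.
\]
(Alternatively, since $\mathcal{Y}$ is also essential, one could invoke part (c) instead; either works.) In particular, $\Phi\in\Hom_\mathcal{A}(\mathcal{X},\mathcal{Y}^*)$, so
\[
\dist\bigl(T,\Hom_\mathcal{A}(\mathcal{X},\mathcal{Y}^*)\bigr)\le\Vert\ad(T)\Vert.
\]

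Next I would bound $\Vert\ad(T)\Vert$ using Theorem~\ref{v1636}(ii). The hypotheses on $\mathcal{X}$ (essentiality) are given, and the norm condition required of the target is supplied by case~(b) of the same theorem, because $\mathcal{Y}^*$ is the dual of the essential Banach left $\mathcal{A}$-module $\mathcal{Y}$. Consequently
\[
\Vert\ad(T)\Vert\le 8\sup\bigl\{\Vert aTb\Vert : a,b\in\mathcal{A}_+\text{ contractions, }ab=0\bigr\},
\]
which chains with the previous inequality to deliver the desired distance estimate with constant $8$.

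For hyperreflexivity, I would appeal to Proposition~\ref{p10}(i), which supplies the chain $\gamma(T)\le\delta(T)\le\dist(T,\Hom_\mathcal{A}(\mathcal{X},\mathcal{Y}^*))$; substituting the bound just obtained for $\gamma(T)=\sup\{\Vert aTb\Vert\}$ on the left yields
\[
\dist\bigl(T,\Hom_\mathcal{A}(\mathcal{X},\mathcal{Y}^*)\bigr)\le 8\,\delta(T),
\]
which is exactly the hyperreflexivity inequality with constant at most $8$. There is no real obstacle here beyond careful bookkeeping of the hypotheses: the only subtle point is remembering that Theorem~\ref{v1636}(ii) applies to $\mathcal{Y}^*$ via case~(b) rather than needing $\mathcal{Y}^*$ itself to be essential as an $\mathcal{A}$-module, which in general it need not be.
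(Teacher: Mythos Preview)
Your proposal is correct and follows essentially the same approach as the paper: apply Theorem~\ref{at1140}(b) (using that $\mathcal{X}$ is essential) to get $\dist(T,\Hom_\mathcal{A}(\mathcal{X},\mathcal{Y}^*))\le\Vert\ad(T)\Vert$, then bound $\Vert\ad(T)\Vert$ via Theorem~\ref{v1636}(ii) case~(b), and conclude hyperreflexivity from Proposition~\ref{p10}. Your observation that case~(b) of Theorem~\ref{v1636}(ii) is the relevant one here (since $\mathcal{Y}^*$ need not itself be essential) is exactly the point that makes the hypotheses fit.
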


\begin{proof}
The estimate follows from 
Theorem~\ref{v1636}(ii) and Theorem~\ref{at1140}, as in Corollary~\ref{s1816}.
The hyperreflexivity follows from the estimates in Proposition~\ref{p10}.
\end{proof}

\begin{corollary}
Let $\mathcal{A}$ be a unital nuclear $C^*$-algebra of real rank zero, and
let $\mathcal{X}$ and $\mathcal{Y}$ be unital Banach right and left $\mathcal{A}$-modules, respectively.
Then
\begin{equation*}
\dist
\bigl(T,\Hom_\mathcal{A}(\mathcal{X},\mathcal{Y}^*)\bigr)
\le 8\sup\bigl\{\Vert e^\perp Te\Vert : e\in\mathcal{A}\text{ projection}\bigr\}
\end{equation*}
for each $T\in B(\mathcal{X},\mathcal{Y}^*)$. 
\end{corollary}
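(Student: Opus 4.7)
The plan is to mimic the structure of the two preceding corollaries, combining the existence of a nearby module homomorphism (Theorem~\ref{at1140}) with the bilinear estimate specialized to projections (Theorem~\ref{v1637}(ii)). Since $\mathcal{A}$ is nuclear and both $\mathcal{X}$ and $\mathcal{Y}$ are unital (hence essential, as $x \cdot 1_\mathcal{A} = x$), Theorem~\ref{at1140} applies: given $T \in B(\mathcal{X}, \mathcal{Y}^*)$, we obtain a continuous right $\mathcal{A}$-module homomorphism $\Phi \colon \mathcal{X} \to \mathcal{Y}^*$ satisfying $\|\Phi\| \le \|T\|$ and $\|T - \Phi\| \le \|\ad(T)\|$ (using either part (b), because $\mathcal{X}$ is essential, or part (c), because $\mathcal{Y}$ is essential).

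Next, because $\mathcal{A}$ is a unital $C^*$-algebra of real rank zero and $\mathcal{X}$, $\mathcal{Y}^*$ are unital Banach right $\mathcal{A}$-modules, Theorem~\ref{v1637}(ii) gives
\[
\|\ad(T)\| \le 8 \sup\bigl\{\|e^\perp T e\| : e \in \mathcal{A} \text{ projection}\bigr\}.
\]
Chaining the two inequalities yields
\[
\dist\bigl(T, \Hom_\mathcal{A}(\mathcal{X}, \mathcal{Y}^*)\bigr) \le \|T - \Phi\| \le \|\ad(T)\| \le 8 \sup\bigl\{\|e^\perp T e\| : e \in \mathcal{A} \text{ projection}\bigr\},
\]
which is the desired distance estimate.

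There is no real obstacle here beyond checking that the hypotheses of the two cited results line up. The only subtle point is confirming that $\mathcal{Y}^*$ qualifies as a unital right $\mathcal{A}$-module in the sense needed by Theorem~\ref{v1637}(ii) so that we may legitimately view $T$ as a map between two unital modules and invoke that theorem; this follows at once from the fact that $\mathcal{Y}$ is unital as a left $\mathcal{A}$-module, because then $\phi \cdot 1_\mathcal{A} = \phi$ for all $\phi \in \mathcal{Y}^*$ under the dual right action. Everything else is a direct assembly of previously established pieces, exactly parallel to the proofs of the two preceding corollaries.
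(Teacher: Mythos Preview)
Your proof is correct and follows essentially the same approach as the paper, which simply cites Theorem~\ref{v1637}(ii) and Theorem~\ref{at1140} and refers back to the pattern of Corollary~\ref{s1816}. Your explicit verification that $\mathcal{Y}^*$ is a unital right $\mathcal{A}$-module (so that Theorem~\ref{v1637}(ii) applies) is a nice touch that the paper leaves implicit.
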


\begin{proof}
The estimate follows from
Theorem~\ref{v1637}(ii) and Theorem~\ref{at1140}, as in Corollary~\ref{s1816}.
\end{proof}

\begin{corollary}
Let $\mathcal{A}$ be a nuclear $C^*$-algebra  of real rank zero, and
let $\mathcal{X}$ and $\mathcal{Y}$ be essential Banach right and left $\mathcal{A}$-modules, respectively.
Then
\begin{equation*}
\dist
\bigl(T,\Hom_\mathcal{A}(\mathcal{X},\mathcal{Y}^*)\bigr)
\le 8\sup\bigl\{\Vert eTf\Vert : e,f\in\mathcal{A} \text{ projections, } ef=0\bigr\}
\end{equation*}
for each $T\in B(\mathcal{X},\mathcal{Y}^*)$. 
\end{corollary}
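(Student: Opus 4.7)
The plan is to imitate the pattern of Corollary~\ref{s1816} and the two subsequent corollaries: first construct a right $\mathcal{A}$-module homomorphism $\Phi$ that is close to $T$ in the norm $\Vert\ad(T)\Vert$, and then bound $\Vert\ad(T)\Vert$ by $8$ times the supremum on the right-hand side. Both ingredients are already available: Theorem~\ref{at1140} supplies $\Phi$ (it requires $\mathcal{A}$ nuclear and one of $\mathcal{X},\mathcal{Y}$ essential), and Theorem~\ref{t43}(ii) supplies the bound on $\Vert\ad(T)\Vert$ in terms of $\sup\{\Vert eTf\Vert : e,f\in\mathcal{A}\text{ projections, }ef=0\}$ (it requires $\mathcal{A}$ of real rank zero, $\mathcal{X}$ essential, and a norming condition on $\mathcal{Y}^*$).

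Concretely, fix $T\in B(\mathcal{X},\mathcal{Y}^*)$. Since $\mathcal{A}$ is nuclear and $\mathcal{X}$ is essential, Theorem~\ref{at1140}(b) produces a continuous right $\mathcal{A}$-module homomorphism $\Phi\colon\mathcal{X}\to\mathcal{Y}^*$ with
\[
\Vert T-\Phi\Vert\le\Vert\ad(T)\Vert.
\]
To apply Theorem~\ref{t43}(ii) to the codomain $\mathcal{Y}^*$, I need the norming condition
\[
\Vert\eta\Vert=\sup\{\Vert\eta a\Vert : a\in\mathcal{A}, \, \Vert a\Vert=1\}
\quad(\eta\in\mathcal{Y}^*).
\]
Because $\mathcal{Y}$ is an essential Banach left $\mathcal{A}$-module, $\mathcal{Y}^*$ falls into case~(b) of Theorem~\ref{t43}(ii), so the condition holds. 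Consequently,
\[
\Vert\ad(T)\Vert\le 8\sup\bigl\{\Vert eTf\Vert : e,f\in\mathcal{A}\text{ projections, }ef=0\bigr\}.
\]
Combining these two inequalities gives
\[
\dist\bigl(T,\Hom_\mathcal{A}(\mathcal{X},\mathcal{Y}^*)\bigr)\le\Vert T-\Phi\Vert\le 8\sup\bigl\{\Vert eTf\Vert : e,f\in\mathcal{A}\text{ projections, }ef=0\bigr\},
\]
as required.

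There is essentially no real obstacle beyond verifying compatibility of hypotheses: every assumption of Theorems~\ref{at1140} and~\ref{t43}(ii) is covered by the statement (nuclearity, real rank zero, and essentiality of both $\mathcal{X}$ and $\mathcal{Y}$). The only subtle point is to remember that the norming condition on $\mathcal{Y}^*$ needed in Theorem~\ref{t43}(ii) is automatic from the essentiality of $\mathcal{Y}$ via case~(b) of that theorem, rather than from any assumption directly on $\mathcal{Y}^*$.
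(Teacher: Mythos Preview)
Your proposal is correct and follows exactly the paper's own approach: the paper's proof simply says that the estimate follows from Theorem~\ref{t43}(ii) and Theorem~\ref{at1140}, as in Corollary~\ref{s1816}, which is precisely what you have spelled out. Your observation that the norming condition on $\mathcal{Y}^*$ comes from case~(b) of Theorem~\ref{t43}(ii) (via the essentiality of $\mathcal{Y}$) is the one detail worth making explicit, and you have done so.
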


\begin{proof}
The estimate follows from
Theorem~\ref{t43}(ii) and Theorem~\ref{at1140}, as in Corollary~\ref{s1816}.
\end{proof}

\subsection{Homomorphisms between non-commutative $L^p$-spaces}

Let $\mathcal{M}$ be a von Neumann algebra.
For each $1\le p\le\infty$, define $1\le p^*\le\infty$ by the requirement that $\tfrac{1}{p}+\tfrac{1}{p^*}=1$.
There exists a natural isomorphism $\omega\mapsto x_\omega$ from $\mathcal{M}_*$ onto
$L^1(\mathcal{M})$ (this isomorphism preserves the adjoint operation, positivity, and polar decomposition),
and hence the space $L^1(\mathcal{M})$ is equipped with a distinguished contractive positive linear functional
$\traza$ defined by $\traza(x_\omega)=\omega(1_\mathcal{M})$ $(\omega\in\mathcal{M}_*)$.
This functional implements, for each $1\le p\le\infty$, the duality 
$\langle\cdot,\cdot\rangle\colon L^p(\mathcal{M})\times L^{p^*}(\mathcal{M})\to\mathbb{C}$
defined by
\[
\langle x,y\rangle=\traza (xy)=\traza(yx)
\quad
(x\in L^p(\mathcal{M}), \, y\in L^{p^*}(\mathcal{M})).
\]
In the case where $p\ne\infty$,
the above duality gives an isometric isomorphism from $L^{p^*}(\mathcal{M})$ onto $L^p(\mathcal{M})^*$ . 
Moreover the duality satisfies the following properties:
\begin{align}
\langle ax,y\rangle & =\langle x,ya\rangle, \quad
\langle xa,y\rangle=\langle x,ay\rangle, \label{e227} \\
\langle ax,y\rangle & =\langle xy,a\rangle, \quad
\langle xa,y\rangle=\langle yx,a\rangle \label{e228}
\end{align}
for all $x\in L^p(\mathcal{M})$, $y\in L^{p^*}(\mathcal{M})$, and $a\in\mathcal{M}$.
Condition \eqref{e227} shows that, for $p\ne\infty$, 
the identification of $L^{p^*}(\mathcal{M})$ with $L^p(\mathcal{M})^*$
is an isomorphism of $\mathcal{M}$-bimodules, and, further, condition \eqref{e228} shows that
$L^p(\mathcal{M})^*$ is a normal $\mathcal{M}$-bimodule.

\begin{theorem}\label{ac1236}
Let $\mathcal{M}$ be a von Neumann algebra, and
let $1\le p,q\le\infty$.
Then
\begin{equation*}
\dist\bigl(T,\Hom_\mathcal{M}(L^\infty(\mathcal{M}),L^q(\mathcal{M}))\bigr)
\le
8\sup\bigl\{\Vert e^\perp Te\Vert : e\in\mathcal{M}\text{ projection}\bigr\}
\end{equation*}
for each $T\in B(L^\infty(\mathcal{M}),L^q(\mathcal{M}))$, and
\begin{equation*}
\dist\bigl(T,\Hom_\mathcal{M}(L^p(\mathcal{M}),L^1(\mathcal{M}))\bigr)
\le
8\sup\bigl\{\Vert e^\perp Te\Vert : e\in\mathcal{M}\text{ projection}\bigr\}
\end{equation*}
for each $T\in B(L^p(\mathcal{M}),L^1(\mathcal{M}))$.
In particular, the spaces $\Hom_\mathcal{M}(L^\infty(\mathcal{M}),L^q(\mathcal{M}))$ and 
$\Hom_\mathcal{M}(L^p(\mathcal{M}),L^1(\mathcal{M}))$
are hyperreflexive.
\end{theorem}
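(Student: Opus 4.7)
The overall strategy is to reduce both distance estimates to the module-level bound $\Vert\ad(T)\Vert\le 8\sup\{\Vert e^\perp Te\Vert : e\in\mathcal{M}\text{ projection}\}$ already supplied by Corollary~\ref{c1549}(ii). For each $T$ I would exhibit an explicit right $\mathcal{M}$-module homomorphism $\Phi$ with $\Vert T-\Phi\Vert\le\Vert\ad(T)\Vert$. Combined with the inequality $\sup\{\Vert e^\perp Te\Vert\}\le\delta(T)$ that follows from parts (i) and (iii) of Proposition~\ref{p10}, this will yield both the distance estimates and the hyperreflexivity.

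For the first estimate, the source $L^\infty(\mathcal{M})=\mathcal{M}$ carries a unit, which suggests taking $\xi=T(1_\mathcal{M})\in L^q(\mathcal{M})$ and setting $\Phi=L_\xi\in\Hom_\mathcal{M}(L^\infty(\mathcal{M}),L^q(\mathcal{M}))$ (by Theorem~\ref{js}(i) with $r=q$). The computation
\[
T(a)-\Phi(a)=T(1_\mathcal{M}\cdot a)-T(1_\mathcal{M})a=\ad(T)(a)(1_\mathcal{M}) \quad (a\in\mathcal{M})
\]
immediately gives $\Vert T-\Phi\Vert\le\Vert\ad(T)\Vert$, which combined with Corollary~\ref{c1549}(ii) establishes the desired bound.

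For the second estimate I would treat $p=\infty$ as a special case of the first estimate (with $q=1$), and assume $1\le p<\infty$. The source has no unit, but the dual does: via the canonical trace pairings one has isometric $\mathcal{M}$-bimodule identifications $L^1(\mathcal{M})^*\cong L^\infty(\mathcal{M})=\mathcal{M}$ and $L^p(\mathcal{M})^*\cong L^{p^*}(\mathcal{M})$, so the adjoint $T^*$ becomes a continuous linear map $\mathcal{M}\to L^{p^*}(\mathcal{M})$. I would then set $\xi=T^*(1_\mathcal{M})\in L^{p^*}(\mathcal{M})$ and $\Phi=L_\xi\in\Hom_\mathcal{M}(L^p(\mathcal{M}),L^1(\mathcal{M}))$ (again by Theorem~\ref{js}(i), now with $r=p^*$). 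Using trace cyclicity and the adjoint property
\[
\traza(\xi xa)=\traza((xa)T^*(1_\mathcal{M}))=\langle xa,T^*(1_\mathcal{M})\rangle=\langle T(xa),1_\mathcal{M}\rangle=\traza(T(xa)),
\]
one obtains
\[
\traza\bigl((T(x)-\Phi(x))a\bigr)=\traza(T(x)a)-\traza(T(xa))=-\traza\bigl(\ad(T)(a)(x)\bigr)
\]
for every $x\in L^p(\mathcal{M})$ and $a\in\mathcal{M}$. Because $|\traza(z)|\le\Vert z\Vert_1$, taking the supremum over $\Vert a\Vert_\infty\le 1$ in the dual description of the $L^1$-norm yields $\Vert T(x)-\Phi(x)\Vert_1\le\Vert\ad(T)\Vert\,\Vert x\Vert_p$, hence $\Vert T-\Phi\Vert\le\Vert\ad(T)\Vert$, and Corollary~\ref{c1549}(ii) again closes the argument.

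The main obstacle is locating the right candidate $\Phi$ for the second estimate: the absence of a unit in $L^p(\mathcal{M})$ blocks the direct imitation of the first case. Theorem~\ref{js}(i) narrows the search to $\Phi=L_\xi$ with $\xi\in L^{p^*}(\mathcal{M})$, and the bimodule duality $L^1(\mathcal{M})^*\cong\mathcal{M}$ then makes $\xi=T^*(1_\mathcal{M})$ essentially the forced choice; after that, the estimate is a single-line trace manipulation.
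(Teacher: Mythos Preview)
Your proof is correct and essentially the same as the paper's: both cases reduce to exhibiting an explicit $L_\xi$ with $\Vert T-L_\xi\Vert\le\Vert\ad(T)\Vert$ and then invoking Corollary~\ref{c1549}(ii). The only cosmetic difference is that for the second estimate you describe $\xi$ as $T^*(1_\mathcal{M})$ via the adjoint, whereas the paper obtains it directly as the element of $L^{p^*}(\mathcal{M})$ representing the functional $x\mapsto\traza(T(x))$; these are the same element, and the ensuing trace computations coincide line for line.
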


\begin{proof}
Suppose that $T\in B(L^\infty(\mathcal{M}),L^q(\mathcal{M}))$.
Define $\xi=T(1_\mathcal{M})\in L^q(\mathcal{M})$.
Then, 
for each $x\in\mathcal{M}$, we have
\[	
\Vert (T-L_\xi)(x)\Vert_q=
\Vert T(1_\mathcal{M}x)-T(1_\mathcal{M})x\Vert_q\le
\Vert\ad (T)\Vert\Vert 1_\mathcal{M}\Vert\Vert x\Vert,
\]
so that
$\Vert T-L_\xi\Vert\le\Vert\ad (T)\Vert$.
Corollary~\ref{c1549} now gives 
\[
\Vert T-L_\xi\Vert\le 8
\sup\bigl\{\Vert e^\perp Te\Vert : e\in\mathcal{M}\text{ projection}\bigr\},
\]
which establishes the required inequality.

Now suppose that $T\in B(L^p(\mathcal{M}),L^1(\mathcal{M}))$.	
In order to get the desired inequality,
we are reduced to consider the case $p\ne \infty$.
Consider the continuous linear functional $\phi$ on $L^p(\mathcal{M})$ defined by
\[
\langle x,\phi\rangle=\traza (T(x))
\quad
(x\in L^p(\mathcal{M})).
\]
Then there exists $\xi\in L^{p^*}(\mathcal{M})$ such that $\Vert\xi\Vert=\Vert\phi\Vert\le\Vert T\Vert$ and
\[
\traza (\xi x)=
\langle x,\phi\rangle=
\traza \bigl(T(x)\bigr)
\quad 
\bigl(x\in L^p(\mathcal{M})\bigr).
\]
For each $x\in L^p(\mathcal{M})$ and $a\in \mathcal{M}$, we see that
\begin{equation*}
\begin{split}
\bigl\vert\traza \bigl((T-L_\xi)(x)a\bigr)\bigr\vert & =
\bigl\vert\traza \bigl(T(x)a-\xi xa)\bigr)\bigr\vert=
\bigl\vert\traza \bigl(T(x)a-T(xa)\bigr)\bigr\vert \\
& \le \Vert T(x)a-T(xa)\Vert_1\le
\Vert\ad (T)\Vert\Vert x\Vert_p\Vert a\Vert.
\end{split}
\end{equation*}
This implies that
$\Vert (T-L_\xi)(x)\Vert_1\le\Vert\ad (T)\Vert\Vert x\Vert_p$,
whence $\Vert T-L_\xi\Vert\le\Vert\ad (T)\Vert$, and 
Corollary~\ref{c1549} shows that 
\[
\Vert T-L_\xi\Vert\le 8
\sup\bigl\{\Vert e^\perp Te\Vert : e\in\mathcal{M}\text{ projection}\bigr\}.
\]

The hyperreflexivity follows from the estimates in Proposition~\ref{p10}.
\end{proof}

\begin{theorem}\label{t1924}
Let $\mathcal{M}$ be an injective von Neumann algebra, and
let $1\le p,q\le \infty$. 
Then
\[
\dist
\bigl(T,\Hom_\mathcal{M}(L^p(\mathcal{M}),L^q(\mathcal{M})\bigr)\le
8
\sup\bigl\{\Vert e^\perp Te\Vert : e\in\mathcal{M}\text{ projection}\bigr\}
\]
for each $T\in B(L^p(\mathcal{M}),L^q(\mathcal{M}))$.
In particular, the space $\Hom_\mathcal{M}(L^p(\mathcal{M}),L^q(\mathcal{M}))$ is hyperreflexive
and the hyperreflexivity constant is at most $8$.
\end{theorem}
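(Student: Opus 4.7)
The plan is to split into cases according to the values of $p$ and $q$, and for each case invoke the appropriate result established earlier in the paper.

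First, the cases $p=\infty$ and $q=1$ are handled directly by Theorem~\ref{ac1236}, which already gives exactly the required distance estimate with constant $8$ (and in fact does not use injectivity of $\mathcal{M}$). It therefore remains only to treat the case $1\le p<\infty$ and $1<q\le\infty$, and this is where injectivity will actually be used.

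In this remaining case, I would apply Corollary~\ref{s1816} with $\mathcal{X}=L^p(\mathcal{M})$ and $\mathcal{Y}=L^{q^*}(\mathcal{M})$, where $q^*$ is the conjugate exponent of $q$ (so that $1\le q^*<\infty$ and $\mathcal{Y}^*\cong L^q(\mathcal{M})$ as $\mathcal{M}$-bimodules). The modules $L^p(\mathcal{M})$ and $L^{q^*}(\mathcal{M})$ are unital as right and left $\mathcal{M}$-modules, respectively, and injectivity of $\mathcal{M}$ is a hypothesis. The one substantive verification is normality of the two dual modules: using the $\mathcal{M}$-bimodule identifications $L^p(\mathcal{M})^*\cong L^{p^*}(\mathcal{M})$ (valid since $p<\infty$) and $L^{q^*}(\mathcal{M})^*\cong L^q(\mathcal{M})$ (valid since $q^*<\infty$), the trace-duality property~\eqref{e228} recorded just before the theorem is precisely the statement that these bimodules are normal. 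Corollary~\ref{s1816} then applies and delivers the desired estimate with constant $8$.

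Finally, hyperreflexivity of $\Hom_\mathcal{M}(L^p(\mathcal{M}),L^q(\mathcal{M}))$ with the same constant follows from Proposition~\ref{p10}: parts~(i) and~(iii) yield the chain $\alpha(T)=\beta(T)\le\gamma(T)\le\delta(T)$, so in particular
\[
\sup\bigl\{\Vert e^\perp Te\Vert:e\in\mathcal{M}\text{ projection}\bigr\}=\alpha(T)\le\delta(T),
\]
and combining with the distance estimate above gives exactly the definition of hyperreflexivity with constant $8$. The only subtle point in the whole argument is what forces the case split: normality of $L^p(\mathcal{M})^*$ as an $\mathcal{M}$-bimodule requires $p<\infty$, and realising $L^q(\mathcal{M})$ as a normal dual $\mathcal{M}$-bimodule requires $q>1$, so the endpoint cases fall outside the scope of Corollary~\ref{s1816} and have to be picked up separately via the direct construction of an approximating $L_\xi$ in Theorem~\ref{ac1236}.
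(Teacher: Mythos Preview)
Your proof is correct and follows essentially the same approach as the paper: first dispose of the endpoint cases $p=\infty$ or $q=1$ via Theorem~\ref{ac1236}, then apply Corollary~\ref{s1816} with $\mathcal{X}=L^p(\mathcal{M})$ and $\mathcal{Y}=L^{q^*}(\mathcal{M})$ in the remaining range, using~\eqref{e228} to verify normality of the dual modules. Your exposition is in fact more detailed than the paper's, which simply notes that both $L^p(\mathcal{M})^*$ and $L^q(\mathcal{M})=L^{q^*}(\mathcal{M})^*$ are normal and invokes Corollary~\ref{s1816}.
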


\begin{proof}
By Theorem~\ref{ac1236}, we need only to consider the case where $p\ne\infty$ and $q\ne 1$, 
and then the result follows from Corollary~\ref{s1816}, because both modules $L^p(\mathcal{M})^*$
and $L^q(\mathcal{M})\bigl(=L^{q^*}(\mathcal{M})^*\bigr)$ are normal.
\end{proof}

At the expense of replacing the condition $1\le p,q\le\infty$ by $1\le q<p\le\infty$ and losing
the bound $8$ on the distance estimate, we may remove the injectivity of the von Neumann algebra
$\mathcal{M}$ in Theorem \ref{t1924}.
To this end, we will be involved with the ultraproduct of non-commutative $L^p$-spaces.
We summarize some of its main properties.

Let $(\mathcal{X}_n)$ be a sequence of Banach spaces and let $\mathcal{U}$ be an ultrafilter on $\mathbb{N}$.
Let $\prod\mathcal{X}_n$ be the $\ell^\infty$-sum of the sequence $(\mathcal{X}_n)$ and take
\[
N_\mathcal{U}=\bigl\{(x_n)\in{\textstyle\prod}\mathcal{X}_n : 
\sideset{}{_\mathcal{U}}\lim\Vert x_n\Vert=0\bigr\}.
\] 
Then the ultraproduct $\prod_\mathcal{U}\mathcal{X}_n$ of the sequence $(\mathcal{X}_n)$
along $\mathcal{U}$ is the quotient Banach space $\prod\mathcal{X}_n/N_\mathcal{U}$.
Given $(x_n)\in\prod\mathcal{X}_n$, 
we write $(x_n)_\mathcal{U}$ for its corresponding equivalence class 
in $\prod_\mathcal{U}\mathcal{X}_n$. The norm on $\prod_\mathcal{U}\mathcal{X}_n$ is given by
\[
\bigl\Vert (x_n)_\mathcal{U}\bigr\Vert=\lim_\mathcal{U}\Vert x_n\Vert
\] 
for each $(x_n)_\mathcal{U}\in\prod_\mathcal{U}\mathcal{X}_n$. 
Let $(\mathcal{Y}_n)$ be another sequence of Banach spaces and let
$(T_n)\in\prod B(\mathcal{X}_n,\mathcal{Y}_n)$.
Then we define 
$\prod_\mathcal{U}T_n\colon\prod_\mathcal{U}\mathcal{X}_n\to\prod_\mathcal{U}\mathcal{Y}_n$
by
\[
{\textstyle\prod}_\mathcal{U} T_n\bigl((x_n)_\mathcal{U}\bigr)=
\bigl(T_n(x_n)\bigr)_\mathcal{U}
\]
for each $(x_n)_\mathcal{U}\in\prod_\mathcal{U}\mathcal{X}_n$.
Of course, it can be checked that the definition we make is independent of 
the choice of the representative of the equivalence class. Moreover,
$\prod_\mathcal{U}T_n$ is continuous and
\begin{equation}\label{e2027}
\bigl\Vert {\textstyle\prod}_\mathcal{U} T_n\bigr\Vert=\lim_\mathcal{U}\Vert T_n\Vert.
\end{equation}
All the above statements are also valid for quasi-Banach spaces.
We refer the reader to \cite{He} for the basics of ultraproducts.

If $(\mathcal{A}_n)$ is a sequence of $C^*$-algebras, then $\prod_\mathcal{U}\mathcal{A}_n$ is
again a $C^*$-algebra. The ultraproduct of a sequence $(\mathcal{M}_n)$ of von Neumann 
algebras is not as straightforward as the $C^*$-algebra case.
According to \cite{Gr,R}, it is known that $\prod_\mathcal{U}L^1(\mathcal{M}_n)$ is isometrically
isomorphic to the predual of a von Neumann algebra $\mathcal{M}_\mathcal{U}$. 
Further, it is shown in \cite{R} that $\mathcal{M}_\mathcal{U}$ has such a nice behaviour as
$\prod_\mathcal{U}L^p(\mathcal{M}_n)$ is isometrically isomorphic to $L^p(\mathcal{M}_\mathcal{U})$
for each $p<\infty$.
Specifically, 
\begin{itemize}
	\item 
	there exists an isometric $\ast$-homomorphism 
	\[
	\iota\colon {\textstyle\prod}_\mathcal{U}\mathcal{M}_n\to \mathcal{M}_\mathcal{U}
	\]
	from the $C^*$-algebra 
	$\prod_\mathcal{U}\mathcal{M}_n$ into the von Neumann algebra $\mathcal{M}_\mathcal{U}$
	such that $\iota\bigl(\prod_\mathcal{U}\mathcal{M}_n\bigr)$ is weak* dense in $\mathcal{M}_\mathcal{U}$, and,
	\item
	for each $p<\infty$, there exists an isometric isomorphism 
	\[
	\Lambda_p\colon{\textstyle\prod}_\mathcal{U} L^p(\mathcal{M}_n)\to L^p(\mathcal{M}_\mathcal{U})
	\]
	such that
	\begin{equation*}
	\Lambda_p\bigl(
	(a_n)_\mathcal{U}(x_n)_\mathcal{U}(b_n)_\mathcal{U}
	\bigr)=
	\iota\bigl((a_n)_\mathcal{U}\bigr)
	\Lambda_p\bigl((x_n)_\mathcal{U}\bigr)
	\iota\bigl((b_n)_\mathcal{U}\bigr)
	\end{equation*}
	and, for $0<p,q,r<\infty$ with $\tfrac{1}{p}+\tfrac{1}{q}=\tfrac{1}{r}$,
	\begin{equation*}
	\Lambda_r\bigl((x_n)_\mathcal{U}(y_n)_\mathcal{U}\bigr)=
	\Lambda_p\bigl((x_n)_\mathcal{U}\bigr)
	\Lambda_q\bigl((y_n)_\mathcal{U}\bigr)
	\end{equation*}
	for all $(a_n)_\mathcal{U},(b_n)_\mathcal{U}\in\prod_\mathcal{U}\mathcal{M}_n$,
	$(x_n)_\mathcal{U}\in\prod_\mathcal{U} L^p(\mathcal{M}_n)$, and
	$(y_n)_\mathcal{U}\in\prod_\mathcal{U} L^q(\mathcal{M}_n)$.
\end{itemize}
Actually, \cite{R} is concerned with the ultrapower of $L^p(\mathcal{M})$ for a given von Neumann algebra,
but it is also emphasized there that the results are equally valid for the above situation.

\begin{theorem}\label{t2018}
Let $1\le q<p\le\infty$.
Then there exists a constant $C_{p,q}\in\mathbb{R}^+$
with the property that,
for each von Neumann algebra $\mathcal{M}$ and
each continuous linear map $T\colon L^p(\mathcal{M})\to L^q(\mathcal{M})$, 
we have
\[
\dist 
\bigl(T,\Hom_\mathcal{M}(L^p(\mathcal{M}),L^q(\mathcal{M}))\bigr)\le
C_{p,q}
\sup\bigl\{\Vert e^\perp Te\Vert : e\in\mathcal{M}\text{ projection}\bigr\}.
\]
In particular, the space $\Hom_\mathcal{M}(L^p(\mathcal{M}),L^q(\mathcal{M}))$ is hyperreflexive.
\end{theorem}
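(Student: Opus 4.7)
The plan is to argue by contradiction, exploiting the ultraproduct framework recalled before the theorem together with the representation $\Hom_{\mathcal{M}}(L^p(\mathcal{M}),L^q(\mathcal{M}))\cong L^r(\mathcal{M})$ of Theorem~\ref{js}(i), where $\tfrac{1}{p}+\tfrac{1}{r}=\tfrac{1}{q}$. The cases $p=\infty$ and $q=1$ are already handled with constant $8$ by Theorem~\ref{ac1236}, so only the range $1<q<p<\infty$ remains; this is precisely the range in which $\Lambda_p$ and $\Lambda_q$ identify the Banach ultraproducts $\prod_\mathcal{U}L^p(\mathcal{M}_n)$ and $\prod_\mathcal{U}L^q(\mathcal{M}_n)$ isometrically with $L^p(\mathcal{M}_\mathcal{U})$ and $L^q(\mathcal{M}_\mathcal{U})$. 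If no such $C_{p,q}$ existed in that range, I could find von Neumann algebras $\mathcal{M}_n$ and $T_n\in B(L^p(\mathcal{M}_n),L^q(\mathcal{M}_n))$ with $\dist(T_n,\Hom_{\mathcal{M}_n})>n\varepsilon_n$, where $\varepsilon_n:=\sup\{\Vert e^\perp T_ne\Vert: e\in\mathcal{M}_n\text{ projection}\}$. Since $e^\perp\Phi e=0$ for every $\Phi\in\Hom_{\mathcal{M}_n}$, subtracting a near-optimal $\Phi_n$ from $T_n$ preserves $\varepsilon_n$, and a subsequent rescaling allows me to assume $\Vert T_n\Vert\le 2$, $\dist(T_n,\Hom_{\mathcal{M}_n})=1$, and $\varepsilon_n\to 0$. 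Fix a free ultrafilter $\mathcal{U}$ on $\mathbb{N}$ and form $\tilde T:=\Lambda_q\circ\prod_\mathcal{U}T_n\circ\Lambda_p^{-1}\in B(L^p(\mathcal{M}_\mathcal{U}),L^q(\mathcal{M}_\mathcal{U}))$, with $\Vert\tilde T\Vert\le 2$ by \eqref{e2027}.

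The crux is to show $\tilde T\in\Hom_{\mathcal{M}_\mathcal{U}}(L^p(\mathcal{M}_\mathcal{U}),L^q(\mathcal{M}_\mathcal{U}))$. From Corollary~\ref{c1549}(ii), $\Vert\ad(T_n)\Vert\le 8\varepsilon_n$, and the multiplicative compatibility of $\iota$ with $\Lambda_p,\Lambda_q$ then gives $\tilde T(xa)=\tilde T(x)a$ for every $x\in L^p(\mathcal{M}_\mathcal{U})$ and every $a\in\iota(\prod_\mathcal{U}\mathcal{M}_n)$. To upgrade this to all $a\in\mathcal{M}_\mathcal{U}$ I combine the weak* density of $\iota(\prod_\mathcal{U}\mathcal{M}_n)$ in $\mathcal{M}_\mathcal{U}$ with a separate weak*-continuity argument. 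Fix $x\in L^p(\mathcal{M}_\mathcal{U})$ and $\phi\in L^{q^*}(\mathcal{M}_\mathcal{U})$; by \eqref{e228}, $\langle\tilde T(x)a,\phi\rangle=\langle\phi\tilde T(x),a\rangle$ with $\phi\tilde T(x)\in L^1(\mathcal{M}_\mathcal{U})=(\mathcal{M}_\mathcal{U})_*$, while for any $\eta\in L^{p^*}(\mathcal{M}_\mathcal{U})$ one has $\langle xa,\eta\rangle=\langle\eta x,a\rangle$ with $\eta x\in L^1(\mathcal{M}_\mathcal{U})$, so that $a\mapsto xa$ is weak*-to-weak continuous from $\mathcal{M}_\mathcal{U}$ into $L^p(\mathcal{M}_\mathcal{U})$; composing with the weak-to-weak continuous bounded operator $\tilde T$ and the functional $\phi$, the map $a\mapsto\langle\tilde T(xa)-\tilde T(x)a,\phi\rangle$ is a weak*-continuous functional on $\mathcal{M}_\mathcal{U}$ that vanishes on the weak* dense subalgebra $\iota(\prod_\mathcal{U}\mathcal{M}_n)$, and therefore vanishes everywhere.

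Now $\tilde T=L_\eta$ for some $\eta\in L^r(\mathcal{M}_\mathcal{U})$ by Theorem~\ref{js}(i), and writing $\eta=\Lambda_r((\eta_n)_\mathcal{U})$ for a bounded sequence $\eta_n\in L^r(\mathcal{M}_n)$, the multiplicativity of the $\Lambda$'s yields $L_\eta=\Lambda_q\circ\prod_\mathcal{U}L_{\eta_n}\circ\Lambda_p^{-1}$. Hence $0=\Vert\tilde T-L_\eta\Vert=\lim_\mathcal{U}\Vert T_n-L_{\eta_n}\Vert\ge\lim_\mathcal{U}\dist(T_n,\Hom_{\mathcal{M}_n})=1$, a contradiction. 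The hyperreflexivity statement then follows from Proposition~\ref{p10}(i). I expect the main obstacle to be the weak*-extension step: verifying that both sides of $\tilde T(xa)=\tilde T(x)a$ are simultaneously weak*-continuous in $a\in\mathcal{M}_\mathcal{U}$, so that the module-homomorphism property propagates from the weak* dense $C^*$-subalgebra $\iota(\prod_\mathcal{U}\mathcal{M}_n)$ to the enveloping von Neumann algebra $\mathcal{M}_\mathcal{U}$; once this is in place, the rest is either a direct application of previously proved results (Theorem~\ref{ac1236}, Corollary~\ref{c1549}, Theorem~\ref{js}, Proposition~\ref{p10}) or routine ultraproduct bookkeeping.
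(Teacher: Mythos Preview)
Your proposal is correct and follows essentially the same route as the paper's proof: reduce to $1<q<p<\infty$ via Theorem~\ref{ac1236}, argue by contradiction, normalize and subtract near-optimal homomorphisms to get a bounded sequence $T_n$ with $\dist(T_n,\Hom_{\mathcal{M}_n})=1$ and $\Vert\ad(T_n)\Vert\to 0$, form the ultraproduct map on $L^p(\mathcal{M}_\mathcal{U})$, extend the module-homomorphism property from $\iota(\prod_\mathcal{U}\mathcal{M}_n)$ to $\mathcal{M}_\mathcal{U}$ by weak* density, apply Theorem~\ref{js}(i) to write it as $L_\eta$, and pull back through $\Lambda_r$ to contradict \eqref{e2027}. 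The only cosmetic difference is that the paper first establishes the intermediate inequality $\dist(T,\Hom_\mathcal{M})\le c_{p,q}\Vert\ad(T)\Vert$ and then invokes Corollary~\ref{c1549}(ii) at the end, whereas you fold Corollary~\ref{c1549}(ii) into the contradiction argument; and your weak*-extension step via \eqref{e228} is a more explicit version of the paper's appeal to reflexivity of $L^p(\mathcal{M}_\mathcal{U})$ and $L^q(\mathcal{M}_\mathcal{U})$.
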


\begin{proof}
In the case where either $p=\infty$ or $q=1$, we apply Theorem~\ref{ac1236}
to obtain the result.
	
Suppose that $1<q<p<\infty$, and take $1<r<\infty$ such that $\tfrac{1}{p}+\tfrac{1}{r}=\tfrac{1}{q}$.
Our objective is to prove that there exists a constant $c_{p,q}\in\mathbb{R}^+$ with the property that
for each von Neumann algebra $\mathcal{M}$ and
each $T\in B(L^p(\mathcal{M}),L^q(\mathcal{M}))$, 
we have
\begin{equation}\label{h1}
\dist 
\bigl(T,\Hom_\mathcal{M}\bigl(L^p(\mathcal{M}),L^q(\mathcal{M})\bigr)\bigr)\le
c_{p,q}\,\Vert\ad (T)\Vert.
\end{equation}
Assume towards a contradiction that the clause is false, and there is no such constant $c_{p,q}$.
Then, for each $n\in\mathbb{N}$, there exists a von Neumann algebra $\mathcal{M}_n$ and a continuous linear map
$R_n\colon L^p(\mathcal{M}_n)\to L^q(\mathcal{M}_n)$ such that
\begin{equation*}
\delta_n:=\dist \bigl(R_n,\Hom_{\mathcal{M}_n}\bigl(L^p(\mathcal{M}_n),L^q(\mathcal{M}_n)\bigr)\bigr)
>
n\Vert\ad (R_n)\Vert.
\end{equation*}
For each $n\in\mathbb{N}$, set $S_n=\delta_n^{-1}R_n$.
Then
\begin{equation}\label{1756}
\Vert\ad (S_n)\Vert< 1/n 
\quad 
(n\in\mathbb{N})
\end{equation}
and
\begin{equation}\label{1757}
\dist  \bigl(S_n,\Hom_{\mathcal{M}_n}\bigl(L^p(\mathcal{M}_n),L^q(\mathcal{M}_n)\bigr)\bigr) =1 
\quad 
(n\in\mathbb{N}).
\end{equation}
Since the sequence $( S_{n})$ need not to be bounded, 
we replace it with a bounded one that still satisfies both \eqref{1756} and \eqref{1757}. 
For this purpose, for each $n\in\mathbb{N}$, we take 
$\Psi_n\in\Hom_{\mathcal{M}_n}
\bigl(L^p(\mathcal{M}_n),L^q(\mathcal{M}_n)\bigr)$
such that $\Vert S_n-\Psi_n\Vert<1+1/n$ and consider the map $T_n=S_n-\Psi_n$. 
Then $(T_n)$ is bounded.
It is clear that $\Vert\ad (T_n)\Vert=\Vert\ad (S_n)\Vert$ and that
\[
\dist  \bigl(T_n,\Hom_{\mathcal{M}_n}\bigl(L^p(\mathcal{M}_n),L^q(\mathcal{M}_n)\bigr)\bigr)=
\dist  \bigl(S_n,\Hom_{\mathcal{M}_n}\bigl(L^p(\mathcal{M}_n),L^q(\mathcal{M}_n)\bigr)\bigr)
\] 
for each $n\in\mathbb{N}$, so that \eqref{1756} and \eqref{1757} give
\begin{equation}\label{1759}
\Vert\ad  (T_n)\Vert< 1/n 
\quad 
(n\in\mathbb{N}),
\end{equation}
\begin{equation}\label{1760}
\dist  \bigl(T_n,\Hom_{\mathcal{M}_n}\bigl(L^p(\mathcal{M}_n),L^q(\mathcal{M}_n)\bigr)\bigr)=1 
\quad 
(n\in\mathbb{N}).
\end{equation}
	
Take a free ultrafilter $\mathcal{U}$ on $\mathbb{N}$.
Consider the ultraproduct von Neumann algebra 
\[
\mathcal{M}_\mathcal{U}=
\left({\textstyle\prod}_\mathcal{U}L^1(\mathcal{M}_n)\right)^*
\]
and the maps
\begin{gather*}
\iota\colon {\textstyle\prod}_\mathcal{U}\mathcal{M}_n\to\mathcal{M}_\mathcal{U},
\\
\Lambda_p\colon{\textstyle\prod}_\mathcal{U} L^p(\mathcal{M}_n)\to L^p(\mathcal{M}_\mathcal{U}),
\\
\Lambda_q\colon{\textstyle\prod}_\mathcal{U} L^q(\mathcal{M}_n)\to L^q(\mathcal{M}_\mathcal{U}),
\\
\Lambda_r\colon{\textstyle\prod}_\mathcal{U} L^r(\mathcal{M}_n)\to L^r(\mathcal{M}_\mathcal{U})
\end{gather*}
introduced in the preliminary remark.
Further, take the ultraproduct map
\[
{\textstyle\prod}_\mathcal{U}T_n\colon 
{\textstyle\prod}_\mathcal{U}L^p(\mathcal{M}_n)\to{\textstyle\prod}_\mathcal{U} L^q(\mathcal{M}_n).
\]
We claim that $\prod_\mathcal{U}T_n$ is a right ${\textstyle\prod}_\mathcal{U}\mathcal{M}_n$-module homomorphism.
Take elements
$(x_n)_\mathcal{U}\in {\textstyle\prod}_\mathcal{U}L^p(\mathcal{M}_n)$ and
$(a_n)_\mathcal{U}\in {\textstyle\prod}_\mathcal{U}\mathcal{M}_n$. 
Then \eqref{1759} gives
\begin{equation*}
\begin{split}
\left\Vert
{\textstyle\prod}_\mathcal{U}T_n
\bigl((x_n)_\mathcal{U}(a_n)_\mathcal{U}\bigr)
-
{\textstyle\prod}_\mathcal{U}T_n
\bigl((x_n)_\mathcal{U}\bigr)(a_n)_\mathcal{U}
\right\Vert & =
\lim_\mathcal{U} \left\Vert {T}_n(x_na_n)-T_n(x_n)a_n\right\Vert \\
& \le
\lim_\mathcal{U}\bigl(\Vert\ad (T_n)\Vert \Vert x_n\Vert\Vert a_n\Vert\bigr) \\
& \le
\lim_\mathcal{U}\bigl(\tfrac{1}{n}\Vert x_n\Vert\Vert a_n\Vert\bigr)
=0.
\end{split}
\end{equation*}
Define 
$\mathbf{T}\colon L^p(\mathcal{M}_\mathcal{U})\to L^q(\mathcal{M}_\mathcal{U})$ by
\[
\mathbf{T}=\Lambda_q\circ{\textstyle\prod}_\mathcal{U}T_n\circ{\Lambda_p}^{-1}.
\]
Then $\mathbf{T}$ is a  right $\iota\bigl({\textstyle\prod}_\mathcal{U}\mathcal{M}_n\bigr)$-module homomorphism.
We now note that: 
\begin{itemize}
\item
$\iota\bigl({\textstyle\prod}_\mathcal{U}\mathcal{M}_n\bigr)$ is weak* dense in $\mathcal{M}_\mathcal{U}$;
\item
the module maps $\mathbf{a}\mapsto\mathbf{x}\mathbf{a}$ and 
$\mathbf{a}\mapsto\mathbf{y}\mathbf{a}$ are weak*-weak* continuous for all 
$\mathbf{x}\in L^p(\mathcal{M}_\mathcal{U})$ and $\mathbf{y}\in L^q(\mathcal{M}_\mathcal{U})$;
\item
the map $\mathbf{T}$ is weak*-weak* continuous,
since both $L^p(\mathcal{M}_\mathcal{U})$ and $L^q(\mathcal{M}_\mathcal{U})$
are reflexive (being  $1<p,q<\infty$).
\end{itemize}
The  above conditions imply that
$\mathbf{T}$ is a right $\mathcal{M}_\mathcal{U}$-module homomorphism.
By Theorem~\ref{js}, there exists $\Xi\in L^r(\mathcal{M}_\mathcal{U})$ such that
\[
\mathbf{T}(\mathbf{x})=\Xi\mathbf{x}
\quad
(\mathbf{x}\in L^p(\mathcal{M}_\mathcal{U})).
\]
Set
$(\xi_n)_\mathcal{U}={\Lambda_r}^{-1}(\Xi)\in {\textstyle\prod}_\mathcal{U}L^r(\mathcal{M}_n)$,
and, for each $n\in\mathbb{N}$, take the left composition map $L_{\xi_n}\colon L^p(\mathcal{M}_n)\to L^q(\mathcal{M}_n)$.
Then, for each $\mathbf{x}\in L^p(\mathcal{M}_\mathcal{U})$, we have
\begin{equation*}
\begin{split}
\mathbf{T}(\mathbf{x}) & =
\Xi\mathbf{x}=
\Lambda_r\bigl((\xi_n)_\mathcal{U}\bigr)
\Lambda_p\bigl({\Lambda_p}^{-1}(\mathbf{x})\bigr)  =
\Lambda_q\bigl((\xi_n)_\mathcal{U}{\Lambda_p}^{-1}(\mathbf{x})\bigr) \\
& = \bigl(\Lambda_q\circ {\textstyle\prod}_\mathcal{U}L_{\xi_n}\circ{\Lambda_p}^{-1}\bigr)(\mathbf{x}),
\end{split}
\end{equation*}
whence 
${\textstyle\prod}_\mathcal{U}T_n={\textstyle\prod}_\mathcal{U}L_{\xi_n}$,
so that \eqref{e2027} gives
$\lim_\mathcal{U}\Vert T_n-L_{\xi_n}\Vert=0$
and hence
\[
\lim_\mathcal{U}
\dist \bigl(T_n,
\Hom_{\mathcal{M}_n}\bigl(L^p(\mathcal{M}_n),L^q(\mathcal{M}_n)\bigr)\bigr)
\le\lim_\mathcal{U}\Vert T_n-L_{\xi_n}\Vert=0,
\]
contrary to \eqref{1760}.

Finally, \eqref{h1} and Corollary~\ref{c1549} give the desired inequality with $C_{p,q}=8c_{p,q}$.

The hyperreflexivity follows from the estimates in Proposition~\ref{p10}.
\end{proof}

\end{document}